\documentclass[review,hidelinks,onefignum,onetabnum]{siamart220329}
\nolinenumbers



\usepackage{amssymb,latexsym} %
\usepackage{mathrsfs}%
\usepackage{mathtools} 
\usepackage{bm}%
\usepackage{subfigure}
\usepackage{float}
\usepackage{cases} 
\usepackage{textcomp} %
\usepackage{hyperref}
\usepackage{booktabs}
\usepackage{color}
\usepackage{multirow}%
\usepackage{longtable}
\usepackage{cleveref}
\usepackage{diagbox}
\usepackage{lipsum}
\usepackage{amsfonts}
\usepackage{graphicx}
\usepackage{epstopdf}
\usepackage{algorithmic}
\ifpdf
  \DeclareGraphicsExtensions{.eps,.pdf,.png,.jpg}
\else
  \DeclareGraphicsExtensions{.eps}
\fi


\usepackage{multirow}
\newcommand{\minitab}[2][l]{\begin{tabular}{#1}#2\end{tabular}}

\newsiamremark{remark}{Remark}
\newsiamremark{hypothesis}{Hypothesis}
\crefname{hypothesis}{Hypothesis}{Hypotheses}
\newsiamthm{claim}{Claim}

\newtheorem{example}{Example} 
\numberwithin{equation}{section}
\headers{Parameter-Robust Preconditioners for A TPE Model}{M. Cai, M. Kuchta, J. Li, Z. Li, and K.-A. Mardal}

\title{Parameter-Robust Preconditioners for A Four-Field Thermo-Poroelasticity Model\thanks{Submitted to the editors DATE.
}}

\author{Mingchao Cai\thanks{Department of Mathematics, Morgan State University, Baltimore,
MD 21251, USA
  (\email{Mingchao.Cai@morgan.edu}, 
  ).}
\and
Miroslav Kuchta\thanks{
Department of Numerical Analysis and Scientific Computing, Simula Research Laboratory, Oslo, 0164 Norway. 
  (\email{miroslav@simula.com}, \email{kent-and@simula.no}).}
\and 
   Jingzhi Li\thanks{Department of Mathematics, Southern University of Science and
Technology, Shenzhen 518055, P.R. China
  (\email{li.jz@sustech.edu.cn}, \email{liziliang@whu.edu.cn}).}
\and 
     Ziliang Li\footnotemark[4]
\and Kent-Andre Mardal\footnotemark[3]
}

\usepackage{amsopn}


\ifpdf
\hypersetup{
  pdftitle={Parameter-Robust Preconditioners for A Four-Field Thermo-Poroelasticity Model},
  pdfauthor={M. Cai, M. Kuchta, J. Li, Z. Li, and K.-A. Mardal}
}
\fi


\externaldocument[][nocite]{ex_supplement}


\begin{document}

\maketitle

\begin{abstract}
We study a thermo-poroelasticity model which describes the interaction between the deformation of an elastic porous material and fluid flow under non-isothermal conditions. The model involves several parameters that can vary significantly in practical applications, posing a challenge for developing discretization techniques and solution algorithms that handle such variations effectively. We propose a four-field formulation and apply a conforming finite element discretization. The primary focus is on constructing and analyzing preconditioners for the resulting linear system. Two preconditioners are proposed: one involves regrouping variables and treating the 4-by-4 system as a 2-by-2 block form, while the other is directly constructed from the 4-by-4 coupled operator. Both preconditioners are demonstrated to be robust with respect to variations in parameters and mesh refinement. Numerical experiments are presented to demonstrate the effectiveness of the proposed preconditioners and validate their theoretical performance under varying parameter settings.
\end{abstract}

\begin{keywords}
  thermo-poroelasticity, parameter-robust preconditioners, finite element methods
\end{keywords}

\begin{MSCcodes}
  65M60, 65F08, 65F10
\end{MSCcodes}

\section{Introduction}
 In this work, we explore numerical methods for the thermo-poroelasticity model, which describes the coupled interaction between non-isothermal fluid flow and the deformation of porous materials. The model is a coupling of Biot's equation \cite{terzaghi1943theoretical, biot1941general, biot1955theory} with the energy conservation equation, specifically addressing the interaction within poroelasticity, encompassing mechanical effects, fluid flow, and heat transfer in porous media. Let $\Omega \subset \mathbb{R}^d$, $d =2$ or $3$ be an open, bounded domain with a Lipschitz polyhedral boundary $\partial \Omega$, and let $J = (0, t_f)$ denote the time interval, where $t_f > 0$ represents the final time. The resulting space-time domain is $\Omega \times J$. The general nonlinear thermo-poroelasticity model \cite{brun2018upscaling, brun2019well, brun2020monolithic} is formulated as: finding $(\bm{u}, p, T)$ such that
\begin{equation}\label{general TP_Model}
\begin{aligned} 
       -\nabla\cdot( 2\mu\bm\varepsilon(\bm u)+\lambda\nabla\cdot\bm u\bm I )+\alpha\nabla p+\beta\nabla T
                      &=\bm f,\quad\text{in }\Omega\times J ,\\
\frac\partial{\partial t}(c_0 p-b_0 T+\alpha\nabla\cdot\bm u)-\nabla\cdot(\bm K\nabla p)
                      &=g,\quad\text{in }\Omega\times J ,\\
\frac\partial{\partial t}(a_0T-b_0p+\beta\nabla\cdot\bm u) 
-C_f(\bm K\nabla p)\cdot\nabla T
-\nabla\cdot(\bm\Theta\nabla T)
                      &=   H,\quad\text{in }\Omega\times J .\\
\end{aligned}
\end{equation}
Here, the operator $\bm\varepsilon$  is defined as $\bm\varepsilon(\bm u)=\frac{1}{2}(\nabla\bm u+\nabla\bm u^T)$ and $\bm I$  is the identity tensor.
The variables \(\bm u\), \(p\), and \(T\) represent the displacement, fluid pressure, and temperature distribution with respect to a reference value, respectively. The right-hand side terms \(\bm f\), \(g\), and \(  H \) correspond to the body force, mass source, and heat source, respectively. The three equations in \eqref{general TP_Model} reflect the principles of momentum conservation, mass conservation, and energy conservation. For a comprehensive derivation of the model, readers are referred to \cite{brun2018upscaling}, which incorporates an additional nonlinear convective term involving \(\frac{\partial}{\partial t}\bm u\) and \(\nabla T\). However, as indicated in \cite{brun2020monolithic, brun2019well, van2019thermoporoelasticity}, this term is considered negligible in comparison to the heat convection term associated with the Darcy velocity. The parameters \(\bm K = (K_{ij})^d_{ij=1}\) and \(\bm\Theta = (\Theta_{ij})^d_{ij=1}\) are matrices determined by the medium's permeability and the fluid viscosity. Detailed descriptions of other parameters, including their physical meanings and corresponding units, can be found in Table \ref{tab:parameters}, where the problem parameters for \eqref{general TP_Model} are summarized. Additionally, the bulk modulus of the porous material, \(K_D\), is related to the two Lam{\'e} parameters \(\lambda\) and \(\mu\) by the formula \(K_D := d^{-1}(d\lambda + 2\mu)\). For further discussion on parameter relations, we refer to \cite{antonietti2023discontinuous}.

In the following, the system \eqref{general TP_Model} is completed by boundary conditions
 \begin{equation}\label{BC}
 \bm u =\bm 0,~p = 0,
  \text{ and } T = 0, \text{ on } \partial\Omega\times J,
\end{equation}
and initial conditions
\begin{equation}\label{IC}
 \bm u(\cdot,0)=\bm u^0,~p(\cdot,0)=p^0,~T(\cdot,0)=T^0, \text{ in } \Omega\times\{0\},  
\end{equation}
for some known functions $\bm u^0, p^0$ and $T^0$.

\begin{table}[]
\centering
\caption{Thermo-Poroelasticity Model Parameters and Their Units}
\begin{tabular}{@{}lll@{}}
\toprule
Notation & Quantity & Unit  \\ \midrule
$a_0$   & thermal capacity & $Pa/K^2$  \\ %
$b_0$   & thermal dilatation coefficient & $K^{-1}$ \\
$c_0$   & specific storage coefficient & $Pa^{-1}$ \\
$\alpha$& Biot-Willis constant&-\\
$\beta$ & thermal stress coefficient & $Pa/K$\\ %
$C_f$   & fluid volumetric heat capacity divided  & $Pa/K^2 $\\
&   by reference temperature & \\%
$\mu,\lambda$ & Lam\'e coefficients & $Pa$ \\ %
$\bm K$& permeability divided by fluid viscosity &$m^2/(Pas)$\\
$\bm\Theta $& effective thermal conductivity &$m^2Pa/(K^2s)$\\
\bottomrule
\end{tabular}\label{tab:parameters}
\end{table}


Recent advancements in numerical analysis for thermo-poroelasticity models include studies on solution existence, uniqueness, and energy estimates for nonlinear problems, as in \cite{brun2019well}. 
By introducing three auxiliary variables and performing multiple substitutions, the authors in \cite{chen2022multiphysics, ge2023analysis} transformed the original model into a four-field formulation, enabling multiphysics finite element methods applicable to both linear and nonlinear convective transport scenarios. Additionally, the numerical approaches proposed in these works demonstrate robustness with respect to the parameter $\lambda$.
Mixed finite element and hybrid techniques have been applied to these models \cite{chen2022multiphysics, zhang2022galerkin, zhang2024coupling}. Robust discontinuous Galerkin methods for fully coupled nonlinear models are detailed in \cite{bonetti2024robust, antonietti2023discontinuous}. Iterative coupling techniques include sequential iteration methods for linear problems \cite{kim2018unconditionally} and splitting schemes for decoupling poroelasticity and thermoelasticity \cite{kolesov2014splitting}. A five-field formulation incorporating heat and Darcy fluxes is presented in \cite{brun2020monolithic}, with both monolithic and decoupled schemes. Recently, reduced-order modeling for the linear thermo-poroelasticity model was introduced in \cite{ballarin2024projection} to enhance the efficiency of decoupled iterative solutions. 

Numerical solutions for the thermo-poroelasticity model are challenging, partly due to significant variations in model parameters across applications. For example, in macroscopic thermo-poroelasticity models within rock mechanics \cite{van2019thermoporoelasticity}, permeability can vary drastically, ranging from \(10^{-5}\) to \(10^{-20}\) \(m^2\). Similarly, in non-isothermal fluid flow models through deformable porous media \cite{van2020mathematical}, the Lam\'{e} coefficients are on the order of \(O(1)\) in magnitude, implying that the bulk modulus is also \(O(1)\) in \(Pa\), while permeability is around \(O(10^{-4})\) \(m^2\). In contrast, models that involve rigid one-dimensional fluid cavities \cite{selvadurai2016thermo} exhibit bulk moduli on the order of \(GPa\), with permeability varying around \(O(10^{-19})\) \(m^2\). Additionally, spatial and temporal discretizations introduce discretization parameters, further complicating the problem. Effective numerical methods must therefore remain robust against significant variations in both model and discretization parameters. For example, \cite{hong2017parameter, lee2017parameter, piersanti2021parameter, boon2021robust} developed parameter-robust finite element discretizations and uniform block-diagonal preconditioners for poroelasticity models. However, the challenges of ensuring parameter robustness and effective preconditioning for thermo-poroelasticity models remain unaddressed. For large discrete systems solved via iterative methods, the convergence rate heavily depends on the condition numbers of preconditioned systems \cite{axelsson2012stable, haga2012parallel}. Although preconditioning techniques for poroelasticity have been studied in \cite{lee2017parameter, hong2017parameter, budivsa2021block}, no effort has focused on thermo-poroelasticity models.

This paper addresses a key gap by proposing a stable finite element method for the thermo-poroelasticity model and developing the corresponding preconditioners. The primary aim is to ensure that the condition number of the preconditioned systems remains bounded across a wide range of model parameters. To focus on the preconditioner design, we omit the nonlinear term \( C_f(\bm K\nabla p)\cdot\nabla T \). Similar to the quasi-static Biot's model, Poisson locking  \cite{oyarzua2016locking, lee2017parameter, antonietti2023discontinuous} occurs when the Lam\'{e} constant $\lambda$ approaches infinity. To address this, we introduce an auxiliary variable, $\xi = -\lambda\nabla\cdot\bm{u} + \alpha p + \beta T$, which captures the volumetric contribution to the total stress. 
This reformulation transforms the original three-field formulation into a symmetric four-field model, effectively mitigating Poisson locking and enabling the application of the operator preconditioning framework from \cite{mardal2011preconditioning,hong2021new}. 
Upon discretization, the four-field formulation results in a large, indefinite linear system. To address this, we analyze the system's stability within weighted Hilbert spaces and apply operator preconditioning techniques. By defining appropriate norms, we prove that the constants in the boundedness and inf-sup conditions are independent of the model parameters, ensuring the framework's uniform robustness under minimal assumptions. To validate the effectiveness of the proposed preconditioners, we conduct numerical experiments using both exact LU decomposition and an inexact algebraic multigrid (AMG) solver for the elliptic operators. These experiments demonstrate the preconditioners' robustness with respect to both the physical parameters of the model and the discretization parameters. 

The paper is structured as follows. \Cref{sec:PDE model} presents a linear thermo-poroelastic model and its four-field formulation, leading to a symmetric indefinite linear system after time discretization. \Cref{sec:preconditioner} introduces two preconditioners and examines their robustness with respect to physical parameters. \Cref{sec:dis} details the construction of a conforming finite element discretization and parameter-robust preconditioners. \Cref{sec:Numerical results} provides numerical experiments to validate the theoretical findings.

\section{Parameter-dependent systems}\label{sec:PDE model}
Throughout this paper, we adopt the following definitions and notations. Let \( L^p(\Omega) \) denote the standard Lebesgue space on \( \Omega \) with index \( p \in [1, \infty] \). In particular, for \( p = 2 \), \( L^2(\Omega) \) represents the space of square-integrable functions on \( \Omega \), equipped with the inner product \( ( \cdot, \cdot ) \) and norm \( \| \cdot \|_0 \). For Sobolev spaces, we define \( W^{m,p}(\Omega) = \{ u \,|\, D^\alpha u \in L^p(\Omega), 0 \leq \alpha \leq m, \|u\|_{W^{m,p}} < \infty \} \), and write \( H^m(\Omega) \) as shorthand for \( W^{m,2}(\Omega) \), with \( \|\cdot\|_{H^m(\Omega)} \) representing the associated norm. We denote \( H^m_0(\Omega) \) as the subspace of \( H^m(\Omega) \) with a vanishing trace on \( \partial\Omega \), and \( H^m_{0,\Gamma}(\Omega) \) as the subspace of \( H^m(\Omega) \) with a vanishing trace on \( \Gamma \subset \partial\Omega \). Specifically, for \( m = 2 \), we use \( \|\cdot\|_1 \) in place of \( \|\cdot\|_{H^m(\Omega)} \) or \( \|\cdot\|_{W^{m,2}(\Omega)} \).
For a Banach space \( X \), we define \( L^p(J; X) = \{ v \,|\, (\int_J \|v\|_X^p)^{\frac{1}{p}} < \infty \} \) and \( H^1(J; X) = \{ v \,|\, (\int_J (\|v\|_0^2 + \|\partial_t v\|_0^2) \, dt)^{\frac{1}{2}} < \infty \} \). 


\subsection{A simplified linear thermo-poroelastic model}

As stated earlier, the nonlinear term \( C_f(\bm K\nabla p)\cdot\nabla T \) in \eqref{general TP_Model} is omitted to simplify the discussion. Therefore, we focus on the following simplified linear thermo-poroelastic model.
\begin{equation}\label{TP_Model}
\begin{aligned} 
       -\nabla\cdot( 2\mu\bm\varepsilon(\bm u)+\lambda\nabla\cdot\bm u\bm I )+\alpha\nabla p+\beta\nabla T
                      &=\bm f,\quad\text{in }\Omega\times J ,\\
\frac\partial{\partial t}(c_0 p-b_0 T+\alpha\nabla\cdot\bm u)-\nabla\cdot(\bm K\nabla p)
                      &=g,\quad\text{in }\Omega\times J ,\\
\frac\partial{\partial t}(a_0T-b_0p+\beta\nabla\cdot\bm u) -\nabla\cdot(\bm\Theta\nabla T)
                      &=H ,\quad\text{in }\Omega\times J .\\
\end{aligned}
\end{equation}

We retain the Dirichlet boundary conditions (\ref{BC}) and the initial condition (\ref{IC}). In practical scenarios, nonhomogeneous Dirichlet and Neumann boundary conditions are commonly encountered. The analysis performed for homogeneous boundary conditions can be straightforwardly extended to accommodate these cases.
We assume that $g,h\in L^2(J;L^2(\Omega))$ and $\bm f\in [H^1(J;L^2(\Omega))]^d$.
And we assume that the initial conditions satisfy $p^0,T^0\in H_0^1(\Omega)$ and $\bm u^0\in [H_0^1(\Omega)]^d$.
Furthermore, as outlined in \cite{brun2020monolithic}, we adopt the following \textbf{assumptions} for the model parameters throughout this paper (similar assumptions are also discussed in \cite{chen2022multiphysics, zhang2024coupling}):

(A1) Assume that \(\bm{K} = K\bm{I}\) and \(\bm\Theta = \theta\bm{I}\), where \(K\) and \(\theta\) are positive constants bounded both above and below, and \(\bm{I}\) denotes the identity matrix.

(A2) The constants $\lambda$  and $\mu$ are strictly positive constants.

(A3) The coefficients $\alpha$, $\beta$, $a_0, b_0, c_0 \geq 0$ and $a_0, c_0 \geq b_0$.

For convenience, we introduce the following parameter transformations:  
\[
t_K = \Delta t K, ~ t_\theta = \Delta t \theta, ~ c_\alpha = \left(c_0 + \frac{\alpha^2}{\lambda}\right), ~ c_{\alpha\beta} = \left(\frac{\alpha \beta}{\lambda} - b_0\right), ~ c_\beta = \left(a_0 + \frac{\beta^2}{\lambda}\right).
\]  
Additionally, to further simplify, we assume \(2\mu = 1\).
Note that from the following proof, if we keep the coefficient $2\mu$, all the conclusions 
still hold true. 
We have adopted this assumption solely for the sake of formal expediency. 

We now introduce the four-field formulation for the linear component of \eqref{TP_Model}. More clearly, following the methodology for handling Biot's problem in \cite{lee2017parameter, oyarzua2016locking}, we define an auxiliary variable to represent the volumetric contribution to the total stress:  
$$  
\xi = -\lambda \nabla \cdot \bm{u} + \alpha p + \beta T,
$$  
which is commonly referred to as the pseudo-total pressure \cite{antonietti2023discontinuous}. Substituting this variable into equation \eqref{TP_Model} and using parameter transformations above, the four-field thermo-poroelasticity problem can be expressed as:
\begin{equation}\label{TP_Model_four}
\begin{aligned} 
       - \nabla\cdot  \bm\varepsilon(\bm u) +\nabla\xi 
                      &=\bm f, \\
      -\lambda \nabla\cdot\bm u-\xi+\alpha p+\beta T
                      &=0, \\
\frac\partial{\partial t}(-\frac\alpha\lambda\xi+c_\alpha p+c_{\alpha\beta} T)
                 -\nabla\cdot( K\nabla p)
                      &=g, \\
\frac\partial{\partial t}(-\frac\beta\lambda\xi +c_{\alpha\beta} p +c_\beta T)
                  -\nabla\cdot( \theta\nabla T)
                      &= H . \\
\end{aligned}
\end{equation}

For the time discretization, we use an equidistant partition of the interval \([0, t_f]\) with a constant step size \(\Delta t\). At any time step, the relationship is given by \(t_n = t_{n-1} + \Delta t\). Using the backward Euler method, we solve for \((\bm u^n, \xi^n, p^n, T^n)\) at each time step based on the equation \eqref{TP_Model_static}, as follows:
\begin{equation}\label{TP_Model_static}
\begin{aligned} 
       - \nabla\cdot  \bm\varepsilon(\bm u^n) +\nabla\xi^n 
                      &=\bm f^n, \\
      -\lambda \nabla\cdot\bm u^n-\xi^n+\alpha p^n+\beta T^n
                      &=0, \\
-\frac\alpha\lambda \xi^n+c_\alpha p^n
                 -\Delta t\nabla\cdot( K\nabla p^n)+c_{\alpha\beta}T^n
                      &=\tilde g^n, \\
-\frac\beta\lambda \xi^n +c_{\alpha\beta} p^n +c_\beta T^n
                  -\Delta t\nabla\cdot(\theta\nabla T^n)
                      &=\tilde {H}^n, \\
\end{aligned}
\end{equation}
where 
$$
\bm u^n=\bm u(\bm x;t_n),~ \xi^n=\xi(\bm x;t_n),~p^n=p(\bm x;t_n),~\bm f^n=\bm f(\bm x;t_n),
$$   
$$
\tilde g^n=\Delta tg(\bm x;t_n)-\frac\alpha\lambda \xi(\bm x;t_{n-1})
        +c_\alpha p(\bm x;t_{n-1})
                      +c_{\alpha\beta}T(\bm x;t_{n-1}),
$$
and
$$
\tilde { H }^n=\Delta t h(\bm x;t_n)-\frac\beta\lambda \xi(\bm x;t_{n-1})
        +c_{\alpha\beta} p(\bm x;t_{n-1})
                     +c_\beta T(\bm x;t_{n-1}).
$$

Since our focus is on the system of linear equations at a specific time step \(n\), we simplify the notation by omitting the time-step subscript. Thus, we replace \(\bm u^n, \xi^n, p^n, T^n, \bm f^n, \tilde g^n\), and \(\tilde { H }^n\) with \(\bm u, \xi, p, T, \bm f, \tilde g\), and \(\tilde {H}\), respectively. This results in the following system of equations:
\begin{equation}\label{TP_Model_param}
\begin{aligned} 
       -  \nabla\cdot  \bm\varepsilon(\bm u) +\nabla\xi
                      &=\bm f, \\
      - \nabla\cdot\bm u-\lambda^{-1}\xi+\frac{\alpha}{\lambda} p+\frac{\beta}{\lambda} T
                      &=0, \\
\frac{\alpha}{\lambda} \xi-c_\alpha p  +  t_K\nabla\cdot( \nabla p)-c_{\alpha\beta}T
                      &=-\tilde g, \\
\frac{\beta}{\lambda} \xi - c_{\alpha\beta} p
-c_\beta T + t_\theta\nabla\cdot( \nabla T)
                      &=-\tilde {H}. \\
\end{aligned}
\end{equation}

We define the following functional spaces:  
\[
\bm{V} = [H^1_0(\Omega)]^d, \quad Q = L^2(\Omega), \quad W = H_0^1(\Omega).
\]
The corresponding continuous variational formulation for \eqref{TP_Model_param} 
is stated as follows:  
find \((\bm u, \xi, p, T) \in \bm V \times Q \times W \times W\)  
such that, for all \((\bm v, \phi, q, S) \in \bm V \times Q \times W \times W\), the following holds:  
\begin{equation}\label{TP_Model_var}
\begin{aligned} 
 (\bm\varepsilon(\bm u),\bm\varepsilon(\bm v)) - (\nabla\cdot\bm v,\xi)&=(\bm f,\bm v) ,   \\
  -(\nabla\cdot\bm u,\phi)- ({\lambda^{-1}}\xi,\phi)+(\frac{\alpha}{\lambda}p,\phi)+(\frac{\beta}{\lambda}T,\phi)&=0 ,\\
 (\frac{\alpha}{\lambda}\xi,q)-(c_\alpha p ,q)- (t_K\nabla p,\nabla q)- (c_{\alpha\beta}T ,q)
                &=(-\tilde g,q),\\
 (\frac{\beta}{\lambda}\xi ,S)-(c_{\alpha\beta}p ,S)-(c_\beta T ,S)   
  -  (t_\theta\nabla T,\nabla S)&=(-\tilde {H}, S).\\
\end{aligned}
\end{equation}

\section{Preconditioners and parameter-robust stability}\label{sec:preconditioner}


Let \( X \) be a separable, real Hilbert space equipped with a norm \(\|\cdot\|_X\), to be defined later, and let its dual space be denoted by \( X^* \). Consider an operator \(\mathcal{A}: X \to X^*\), which is an invertible, symmetric isomorphism and belongs to the space of bounded linear operators, \(\mathcal{L}(X, X^*)\). Given \(\mathcal{F} \in X^*\), the goal is to find \(x \in X\) such that:  
\begin{equation*}  
\mathcal{A} x = \mathcal{F}.  
\end{equation*}  
The operator norm of \(\mathcal{A}\) in \(\mathcal{L}(X, X^*)\) is defined as:  
\begin{equation*}  
\|\mathcal{A}\|_{\mathcal{L}(X, X^*)} = \sup_{x \in X} \frac{\|\mathcal{A}x\|_{X^*}}{\|x\|_X}.  
\end{equation*}  
To improve computational efficiency, a preconditioner \(\mathcal{B}^{-1} \in \mathcal{L}(X^*, X)\), which is a symmetric isomorphism, is introduced. The preconditioned problem is written as:  
\begin{equation*}  
\mathcal{B}^{-1}\mathcal{A} x = \mathcal{B}^{-1}\mathcal{F}.  
\end{equation*}  
The convergence rate of iterative methods, such as the MinRes method for solving this problem, depends on the condition number \(\kappa(\mathcal{B}^{-1}\mathcal{A})\), defined as:  
\begin{equation*}  
\kappa(\mathcal{B}^{-1}\mathcal{A}) = \|\mathcal{B}^{-1}\mathcal{A}\|_{\mathcal{L}(X, X)} \|(\mathcal{B}^{-1}\mathcal{A})^{-1}\|_{\mathcal{L}(X, X)}.  
\end{equation*}  

For a parameter-dependent operator \(\mathcal{A}_\epsilon \in \mathcal{L}(X_{\epsilon}, X^*_{\epsilon})\), the objective is to design a preconditioner \(\mathcal{B}_\epsilon\) such that the condition number of the preconditioned system is robust with respect to a set of parameters \(\epsilon\) (including \(\mu\), \(\lambda\), \(\alpha\), \(\beta\), \(a_0\), \(b_0\), \(c_0\), \(K\), and \(\theta\)). We assume that appropriate function spaces \(X_\epsilon\) and \(X_\epsilon^*\) can be identified such that the following operator norms:  
\begin{equation*}  
\|\mathcal{A}_\epsilon\|_{\mathcal{L}(X_\epsilon, X_\epsilon^*)}, \quad  
\|(\mathcal{A}_\epsilon)^{-1}\|_{\mathcal{L}(X_\epsilon^*, X_\epsilon)}, \quad  
\|\mathcal{B}_\epsilon\|_{\mathcal{L}(X_\epsilon^*, X_\epsilon)}, \quad  
\|(\mathcal{B}_\epsilon)^{-1}\|_{\mathcal{L}(X_\epsilon, X_\epsilon^*)}  
\end{equation*}  
remain uniformly bounded, independent of the parameters \(\epsilon\).  
Under these assumptions, the condition number \(\kappa(\mathcal{B}_\epsilon^{-1} \mathcal{A}_\epsilon)\) will also be uniformly bounded, regardless of the values of \(\epsilon\). 

We note that 
the system \eqref{TP_Model_param} can be expressed in operator form as follows:
\begin{equation}\label{TP matrix}
\begin{bmatrix}
- \text{div }\bm\varepsilon     & \nabla             &  0                  &  0               \\             
-\text{div}& -\lambda^{-1}I    &\frac{\alpha}{\lambda}I & \frac{\beta}{\lambda}I\\
0          &\frac{\alpha}{\lambda}I     &-c_\alpha  I+  t_K\text{div}( \nabla)&-c_{\alpha\beta}  I\\
0          & \frac{\beta}{\lambda} I       &-c_{\alpha\beta}I   &-c_\beta  I+ t_\theta\text{div}(\nabla)\\
\end{bmatrix}
\begin{bmatrix}
 \bm u \\ \xi \\ p \\ T
\end{bmatrix}
=
\begin{bmatrix}
 \bm f\\ 0 \\ -\tilde g \\ -\tilde {H}
\end{bmatrix},
\end{equation}  
where $I$ is an identity operator. 
Let the coefficient matrix of the system be denoted by the operator $\mathcal{A}$ and the right-hand side part be denoted by $\mathcal F$. For simplicity, we rewrite the system \eqref{TP matrix} as:  
\begin{equation*}  
\mathcal{A} (\bm{u}, \xi, p, T) = \mathcal{F}.  
\end{equation*}  
It is easy to test 
that $\mathcal A$ is a symmetric linear operator with respect to the Dirichlet boundary condition:  
\begin{equation*}\label{bilinear A}
\begin{aligned} 
(\mathcal A(\bm u ,\xi, p, T),(\bm v, \phi, q, S))
&=((\bm u ,\xi, p, T),\mathcal A(\bm v, \phi, q, S))\\
&=( \bm\varepsilon(\bm u),\bm\varepsilon(\bm v)) - (\nabla\cdot\bm v,\xi)  
  -(\nabla\cdot\bm u,\phi)- ({\lambda^{-1}}\xi,\phi)\\
  &+(\frac{\alpha}{\lambda}p,\phi)+(\frac{\beta}{\lambda}T,\phi)  
   +(\frac{\alpha}{\lambda}\xi ,q)-(c_\alpha p ,q)- (t_K\nabla p,\nabla q)
   \\
   & -(c_{\alpha\beta}T ,q) +(\frac{\beta}{\lambda}\xi ,S)-(c_{\alpha\beta}p ,S)-(c_\beta T,S)   
  -  ( t_\theta\nabla T,\nabla S) .
\end{aligned}
\end{equation*} 

Before we define appropriate parameter-dependent norms for $\mathcal A$, we first recall the Lam\'e problem in linear elasticity:
Find  $\bm u : \Omega  \rightarrow \mathbb{R}^d$, $p : \Omega \rightarrow \mathbb{R}$ for 
\begin{equation}\label{ex: elasticity}
\begin{aligned}  
&-\text{div} (\bm\varepsilon(\bm u)) + \nabla p = \bm f, \\
&-\text{div}\bm u-\frac{1}{\lambda}p=g,
\end{aligned}  
\end{equation}
with $\bm u|_{\partial\Omega} = \bm{0}$. Here, $1 \leq \lambda < +\infty$ is a positive constant 
and $\bm\varepsilon(\bm u)$ is the symmetric
gradient of $\bm u$. 
When the three variables \((\xi, p, T)\) are grouped together, the system in (\ref{TP matrix}) resembles the problem in \eqref{ex: elasticity}. The variational form of problem \eqref{ex: elasticity} reads as: find \(\bm u \in [H_0^1(\Omega)]^d\) and \(p \in L^2(\Omega)\) such that:  
\begin{equation}\label{ex: elasticity var}  
\begin{aligned}  
    &(\bm\varepsilon(\bm u), \bm\varepsilon(\bm v)) - (p, \text{div}\bm v) = (f, \bm v),  
    \quad \forall \bm v \in [H_0^1(\Omega)]^d, \\  
    &-(\text{div}\bm u, q) - \frac{1}{\lambda}(p, q) = (g, q), \quad \forall q \in L^2(\Omega).  
\end{aligned}  
\end{equation}  
In this saddle point problem, the stabilizing term \(\frac{1}{\lambda} (p, q)\) weakens as \(\lambda\) increases. In the limiting case where \(\lambda = +\infty\), the system becomes unstable under standard norms such as \((\|\bm u\|_1^2 + \|p\|_0^2)^{1/2}\). This instability arises because \(\text{div}[H_0^1(\Omega)]^d \subsetneq L^2(\Omega)\). 
To ensure \(\lambda\)-independent stability for the system, it is important to note that \(\text{div}[H_0^1(\Omega)]^d\) controls only the \(L^2(\Omega)\) norm of the zero-mean component of \(p\), leaving the mean value part of \(p\) uncontrolled without the stabilizing term.  
For any \(\phi \in L^2(\Omega)\), its mean-value and mean-zero components are defined as  
\begin{equation}\label{def of phi_0}  
\phi_m := P_m\phi \quad \text{and} \quad \phi_0 := \phi - \phi_m,  
\end{equation}  
where  
\[
P_m \phi := \left(\frac{1}{|\Omega|} \int_\Omega \phi \, dx \right) \chi_{\Omega},  
\]  
with \(\chi_{\Omega}\) representing the characteristic function of \(\Omega\), and \(|\Omega|\) denoting the Lebesgue measure of \(\Omega\). 
To achieve \(\lambda\)-robust stability for the problem in (\ref{ex: elasticity var}), the appropriate Hilbert space with a \(\lambda\)-independent norm is given by:  
\[
\|(\bm u, p)\|^2 = \|\bm u\|_1^2 + \frac{1}{\lambda}\|p_m\|_0^2 + \|p_0\|_0^2.
\]  
Such a formulation naturally suggests considering \(p \in \lambda^{-1/2} L^2(\Omega) \cap L_0^2(\Omega)\).  
This leads to a \(\lambda\)-robust preconditioner for the problem (\ref{ex: elasticity}) as proposed in \cite{lee2017parameter}:  
\begin{equation}\label{eq:pre_elasticity}
\begin{bmatrix}  
-\Delta^{-1} & 0 \\  
0 & (\frac{1}{\lambda}I_m + I_0)^{-1}  
\end{bmatrix}.
\end{equation}
Here, \(I_0\) represents the Riesz map from \(L^2(\Omega)\) to the dual of \(L^2_0(\Omega)\), while \(I_m\) is the corresponding operator mapping \(L^2(\Omega)\) to the dual of the complement of \(L^2_0(\Omega)\).

\subsection{A parameter-robust preconditioner by regrouping variables}

 By grouping the variables into \(\bm{u}\) and \((\xi, p, T)\), and utilizing the product space \(\bm{V} \times (Q \times W \times W)\), the system \eqref{TP matrix} can be reformulated into the standard saddle point structure:
\begin{equation} \label{mathcal A blocks}   
\mathcal{A} =  
\begin{bmatrix}  
A_0 & B_0^* \\  
B_0 & -C_0  
\end{bmatrix},  
\end{equation}  
where
$
A_0= - \text{div}\bm\varepsilon, B_0=(-\text{div}, 0, 0)^T,  
$
and
$$
C_0=\begin{bmatrix}           
         & \lambda^{-1}I    &-\frac{\alpha}{\lambda}I  & -\frac{\beta}{\lambda}I\\
         &-\frac{\alpha}{\lambda}I     &c_\alpha  I-  t_K\text{div}( \nabla)&c_{\alpha\beta}  I\\
         &- \frac{\beta}{\lambda}  I        &c_{\alpha\beta}I      &c_\beta  I- t_\theta\text{div}(\nabla)
\end{bmatrix},
$$ 
and $B_0^*=(\nabla , 0, 0)$ is the adjoint operator of $B_0$.
Building on the parameter-robust preconditioner developed for saddle point problems with a penalty term, as detailed in \cite{boon2021robust, braess1996stability, hong2021new}, and leveraging the block-processing method for multi-network poroelasticity problems from \cite{piersanti2021parameter}, we adapt and extend these methodologies to tackle the thermo-poroelasticity problem presented in this paper.

It is natural to use the block diagonal operator
\begin{equation*} 
\begin{bmatrix}
A_0^{-1}  & 0           \\             
0       &  (C_0+B_0A_0^{-1}B_0^*)^{-1}
\end{bmatrix},
\end{equation*}
or its approximation to construct block preconditioners for $\mathcal{A}$.
 To this end, we consider a parameter-dependent norm for the thermo-poroelasticity problem as follows:  
\begin{equation}\label{3blocks norm}  
\begin{aligned}  
\|(\bm{u}, \xi, p, T)\|_{\mathcal{B}_1}^2 = &  \|\bm\varepsilon(\bm{u})\|_0^2 + \frac{1}{\lambda}\|\xi\|_0^2 + \|\xi_0\|_0^2 - 2(\frac{\alpha}{\lambda}p, \xi) - 2(\frac{\beta}{\lambda}T, \xi) \\  
& + c_\alpha\|p\|_0^2 + t_K\|\nabla p\|_0^2 + c_\beta\|T\|_0^2 + t_\theta\|\nabla T\|_0^2 + 2(c_{\alpha\beta}p, T),  
\end{aligned}  
\end{equation}  
where \(\xi_0\) is the mean value zero of \(\xi\), as defined in \eqref{def of phi_0}.

The norm defined in \eqref{3blocks norm} contains some negative terms, requiring a demonstration to confirm that it qualifies as a valid norm. To address this, we introduce a bilinear form \((\mathcal{B}_1 \cdot, \cdot)\), where the operator \(\mathcal{B}_1\) is defined as:  
\begin{equation} \label{operator B1}  
\mathcal{B}_1 =   
\begin{bmatrix}  
-  \text{div }\bm\varepsilon & 0 & 0 & 0 \\  
0 & \lambda^{-1}I + I_0 & -\frac{\alpha}{\lambda}I & -\frac{\beta}{\lambda}I \\  
0 & -\frac{\alpha}{\lambda}I & c_\alpha I - t_K \text{div}(\nabla) & c_{\alpha\beta}I \\  
0 & -\frac{\beta}{\lambda}I & c_{\alpha\beta}I & c_\beta I - t_\theta \text{div}(\nabla)  
\end{bmatrix},  
\end{equation}  
and is subject to the same boundary conditions as those in (\ref{BC}).
We recall that \(I\) represents the Riesz map from \(Q\) to its dual \(Q^*\) and \(I_0\) maps \(Q\) to the dual of \(Q \cap L_0^2(\Omega)\). 
It should be noted that, unlike the preconditioner \eqref{eq:pre_elasticity} used for the Lam{\'e} problem, the preconditioner in \eqref{eq:precond3x3} employs the operator \(\lambda^{-1}I + I_0\) in the $(2,2)$ block instead of \(\lambda^{-1}I_m + I_0\). For \(\lambda \geq 1\), however, these two operators are spectrally equivalent \cite{lee2017parameter}. For further implementation details, we refer the readers to \cite{lee2017parameter, mardal2011preconditioning}. 
Additionally, we highlight that the lower \(3 \times 3\) block of \(\mathcal{B}_1\) is connected to the preconditioner used in the multiple-network poroelastic model \cite{piersanti2021parameter}, where diagonalization by congruence was applied in order simplify evaluation of the discrete preconditioner in terms of multilevel methods. In contrast, our approach deals directly with the \(3 \times 3\) operator. In particular, we show in \Cref{rmrk:amg} that the block is amenable to algebraic multigrid.


Defining an operator matrix 
$
\mathcal{B}_{22}=\begin{bmatrix} 
         0    &0  & 0 &0\\
         0    &I_0  & 0 &0\\
         0     &0  &0 &0\\
         0     &0  &0 &0
\end{bmatrix}
$and we can split $\mathcal{B}_1$ as
$$\mathcal{B}_1=\tilde{\mathcal{B}_1}+\mathcal{B}_{22}, $$
where $\tilde{\mathcal{B}_1}=\mathcal{B}_1-\mathcal{B}_{22}$.
Given the assumption \(c_0 - b_0 \ge 0\) and \(a_0 - b_0 \ge 0\), we establish:  
\[
\begin{aligned}  
(c_0p, p) + (a_0T, T) - 2(b_0p, T) &\geq (c_0p, p) + (a_0T, T) - (b_0p, p) - (b_0T, T) \\  
&= (C_p p, p) + (C_T T, T) \ge 0,  
\end{aligned}  
\]  
for any \((p, T) \neq (0, 0)\), where \(C_p = c_0 - b_0\) and \(C_T = a_0 - b_0\).  
Using the definitions of \(c_\alpha\), \(c_\beta\), and \(c_{\alpha\beta}\), along with Green's formula and the boundary conditions in (\ref{BC}), we derive:  
\[
\begin{aligned}  
&(\tilde{\mathcal{B}_1}(\bm{u}, \xi, p, T), (\bm{u}, \xi, p, T)) \\  
=&  \|\bm{\varepsilon}(\bm{u})\|_0^2 + \left(\frac{1}{\lambda}\xi, \xi\right) - 2\left(\frac{\alpha}{\lambda}p, \xi\right) - 2\left(\frac{\beta}{\lambda}T, \xi\right)  
+ (c_\alpha p, p) + (c_\beta T, T) + 2(c_{\alpha\beta}p, T) \\  
&+ t_K\|\nabla p\|_0^2 + t_\theta\|\nabla T\|_0^2 \\  
=&  \|\bm{\varepsilon}(\bm{u})\|_0^2 + \left(\frac{1}{\lambda}\xi, \xi\right) - 2\left(\frac{\alpha}{\lambda}p, \xi\right) - 2\left(\frac{\beta}{\lambda}T, \xi\right)  
+ \left(\frac{\alpha^2}{\lambda}p, p\right) + \left(\frac{\beta^2}{\lambda}T, T\right) \\  
&+ 2\left(\frac{\alpha\beta}{\lambda}p, T\right) + (c_0p, p) + (a_0T, T) - 2(b_0p, T) + t_K\|\nabla p\|_0^2 + t_\theta\|\nabla T\|_0^2 \\  
=&  \|\bm{\varepsilon}(\bm{u})\|_0^2 + \left(-\frac{1}{\sqrt{\lambda}}\xi + \frac{\alpha}{\sqrt{\lambda}}p + \frac{\beta}{\sqrt{\lambda}}T,  
-\frac{1}{\sqrt{\lambda}}\xi + \frac{\alpha}{\sqrt{\lambda}}p + \frac{\beta}{\sqrt{\lambda}}T\right) \\  
&+ (c_0p, p) + (a_0T, T) - 2(b_0p, T) + t_K\|\nabla p\|_0^2 + t_\theta\|\nabla T\|_0^2 \\  
\geq & 0.  
\end{aligned}  
\]  
with equality holding only if \((\bm{u}, \xi, p, T) = (\bm{0}, 0, 0, 0)\). This demonstrates that \(\tilde{\mathcal{B}_1}\) is a symmetric positive definite operator, making \((\tilde{\mathcal{B}_1} \cdot, \cdot)\) an inner product. Consequently, the quantity  
\[
\|(\bm{u}, \xi, p, T)\|_{\mathcal{B}_1} = \Big( (\tilde{\mathcal{B}_1}(\bm{u}, \xi, p, T), (\bm{u}, \xi, p, T)) + \|\xi_0\|_0^2 \Big)^{\frac{1}{2}}
\]  
is a valid norm induced by this inner product. Furthermore, \(\mathcal{B}_1\) is a bounded linear operator, confirming that the expression in \eqref{3blocks norm} defines a proper norm.

The corresponding preconditioner for the continuous system associated with the norm \eqref{3blocks norm} can be expressed as:  
\begin{equation}\label{eq:precond3x3}  
\mathcal{B}_1^{-1}
=\begin{bmatrix}  
-  \text{div }\bm\varepsilon & 0 & 0 & 0 \\  
0 & \lambda^{-1}I + I_0 & -\frac{\alpha}{\lambda}I & -\frac{\beta}{\lambda}I \\  
0 & -\frac{\alpha}{\lambda}I & c_\alpha I - t_K \text{div}(\nabla) & c_{\alpha\beta}I \\  
0 & -\frac{\beta}{\lambda}I & c_{\alpha\beta}I & c_\beta I - t_\theta \text{div}(\nabla)  
\end{bmatrix}^{-1}.  
\end{equation}  

We first introduce a lemma that will be used in the following part of the paper to clarify the relationship between $\|\cdot\|_1$ and $\|\bm\varepsilon(\cdot)\|_0$.
\begin{lemma}\label{Korn's inequa}
There exists a constant $C_K$ such that the Korn’s inequality 
\begin{equation*}
               C_K \Vert  \bm  u\Vert_1^2   \le  \Vert\bm\varepsilon(\bm u)\Vert_0^2,
               \quad\forall \bm u\in [H^1_0(\Omega)]^d,  d=2,3.
\end{equation*}
holds true \cite{brenner2004korn}. 
\end{lemma}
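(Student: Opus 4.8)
The plan is to reduce the statement to the classical first Korn inequality, which on $[H^1_0(\Omega)]^d$ follows from an elementary integration-by-parts identity combined with the Poincar\'e inequality; apart from a density argument, no deep tools are needed, and the resulting constant $C_K$ depends only on $\Omega$.

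First I would prove the key pointwise-to-integral identity on smooth, compactly supported fields. For $\bm u \in [C_0^\infty(\Omega)]^d$, expanding $\bm\varepsilon(\bm u)_{ij} = \frac{1}{2}(\partial_i u_j + \partial_j u_i)$ componentwise and summing gives
\[
\|\bm\varepsilon(\bm u)\|_0^2 = \frac{1}{2}\|\nabla\bm u\|_0^2 + \frac{1}{2}\sum_{i,j=1}^d \int_\Omega (\partial_i u_j)(\partial_j u_i)\,dx.
\]
In the cross term I would integrate by parts twice, once in $x_i$ and once in $x_j$; the boundary contributions vanish because $\bm u$ has compact support, and using $\partial_i\partial_j = \partial_j\partial_i$ one gets $\sum_{i,j}\int_\Omega (\partial_i u_j)(\partial_j u_i)\,dx = \int_\Omega (\nabla\cdot\bm u)^2\,dx \ge 0$. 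Hence
\[
\|\bm\varepsilon(\bm u)\|_0^2 \;\ge\; \frac{1}{2}\|\nabla\bm u\|_0^2, \qquad \bm u \in [C_0^\infty(\Omega)]^d.
\]
Since $[C_0^\infty(\Omega)]^d$ is dense in $[H^1_0(\Omega)]^d$ and both sides of this inequality are continuous in the $H^1$ norm, it extends to all $\bm u \in [H^1_0(\Omega)]^d$.

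Finally I would upgrade the control of $\|\nabla\bm u\|_0$ to control of the full norm. Applying the Poincar\'e inequality $\|\bm u\|_0 \le C_P\|\nabla\bm u\|_0$, valid componentwise on $H^1_0(\Omega)$, yields $\|\bm u\|_1^2 = \|\bm u\|_0^2 + \|\nabla\bm u\|_0^2 \le (1+C_P^2)\|\nabla\bm u\|_0^2 \le 2(1+C_P^2)\|\bm\varepsilon(\bm u)\|_0^2$, which is the claimed inequality with $C_K = \big(2(1+C_P^2)\big)^{-1}$, a constant depending only on $\Omega$ (in particular independent of all model parameters).

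There is essentially no serious obstacle for the homogeneous-boundary case: the only point requiring care is justifying the double integration by parts and the vanishing of the boundary terms, which is exactly why one argues first in $[C_0^\infty(\Omega)]^d$ and then passes to the limit by density. By contrast, the ``second'' Korn inequality on all of $[H^1(\Omega)]^d$ (without the homogeneous boundary condition) is substantially harder and typically relies on a compactness argument or on the Ne\v{c}as/Lions estimate for the gradient in $H^{-1}$; since that version is not needed here, I would simply cite \cite{brenner2004korn} for the general statement and record the short self-contained argument above for $\bm u \in [H^1_0(\Omega)]^d$.
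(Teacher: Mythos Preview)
Your argument is correct: the identity $\|\bm\varepsilon(\bm u)\|_0^2=\tfrac{1}{2}\|\nabla\bm u\|_0^2+\tfrac{1}{2}\|\nabla\cdot\bm u\|_0^2$ for $\bm u\in[C_0^\infty(\Omega)]^d$, followed by density and the Poincar\'e inequality, is the standard elementary route to the first Korn inequality on $[H^1_0(\Omega)]^d$, and your constant $C_K=(2(1+C_P^2))^{-1}$ is legitimate and depends only on $\Omega$.

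The paper, however, does not prove this lemma at all; it simply states the inequality and cites \cite{brenner2004korn}. So there is nothing to compare at the level of technique. Your write-up goes beyond what the paper does by supplying a self-contained proof, and your closing remark correctly flags that the harder ``second'' Korn inequality (on all of $[H^1(\Omega)]^d$) is not needed here.
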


\begin{lemma}[continuity]\label{le:A bound under B1}
Assume that the conditions (A1)-(A3) are satisfied. Let \(X = \bm{V} \times Q \times W \times W\) denote the Hilbert space equipped with the norm defined in \eqref{3blocks norm}. Let \(\mathcal{A}\) represent the operator defined in \eqref{TP matrix}. Then, there exists a constant \(C\), independent of any problem parameters, such that:
\begin{equation}\label{eq:A bound under B1}
\begin{aligned}  
    (\mathcal A(\bm u,\xi,p,T),(\bm v,\phi,q,S))
      \le& C \|(\bm u,\xi,p,T) \|_{\mathcal{B}_1} \|(\bm v,\phi ,q, S)\|_{\mathcal{B}_1}    
\end{aligned}  
\end{equation}
for all $(\bm u,\xi,p,T), (\bm v,\phi ,q, S)\in X$.

\end{lemma}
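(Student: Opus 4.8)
The plan is to bound each of the bilinear-form terms appearing in $(\mathcal{A}(\bm u,\xi,p,T),(\bm v,\phi,q,S))$ by the $\mathcal B_1$-norms of the two arguments, with parameter-independent constants. First I would write out the bilinear form explicitly (as already displayed above the statement): it consists of the elasticity term $(\bm\varepsilon(\bm u),\bm\varepsilon(\bm v))$, the two coupling terms $-(\nabla\cdot\bm v,\xi)$ and $-(\nabla\cdot\bm u,\phi)$, the mass term $-(\lambda^{-1}\xi,\phi)$, the cross terms $(\tfrac{\alpha}{\lambda}p,\phi)$, $(\tfrac{\beta}{\lambda}T,\phi)$, $(\tfrac{\alpha}{\lambda}\xi,q)$, $(\tfrac{\beta}{\lambda}\xi,S)$, the reaction terms $-(c_\alpha p,q)$, $-(c_\beta T,S)$, $-(c_{\alpha\beta}T,q)$, $-(c_{\alpha\beta}p,S)$, and the diffusion terms $-(t_K\nabla p,\nabla q)$, $-(t_\theta\nabla T,\nabla S)$. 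The diffusion, reaction (diagonal $c_\alpha,c_\beta$), and elasticity terms are immediate by Cauchy--Schwarz, since $\|\bm\varepsilon(\bm u)\|_0^2$, $c_\alpha\|p\|_0^2$, $c_\beta\|T\|_0^2$, $t_K\|\nabla p\|_0^2$, $t_\theta\|\nabla T\|_0^2$ all appear (after diagonalization) as summands controlled by $\|(\bm u,\xi,p,T)\|_{\mathcal B_1}^2$.

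The key point is to identify a ``diagonalized'' lower bound for the norm. Following the computation already carried out in the excerpt for $(\tilde{\mathcal B_1}\cdot,\cdot)$, I would use the identity
\[
\|(\bm u,\xi,p,T)\|_{\mathcal B_1}^2
= \|\bm\varepsilon(\bm u)\|_0^2 + \Big\|{-}\tfrac{1}{\sqrt\lambda}\xi+\tfrac{\alpha}{\sqrt\lambda}p+\tfrac{\beta}{\sqrt\lambda}T\Big\|_0^2
+ (C_p p,p)+(C_T T,T)+t_K\|\nabla p\|_0^2+t_\theta\|\nabla T\|_0^2+\|\xi_0\|_0^2,
\]
together with $c_\alpha\|p\|_0^2 = \tfrac{\alpha^2}{\lambda}\|p\|_0^2+c_0\|p\|_0^2$ etc. From this one reads off that $\|\bm\varepsilon(\bm u)\|_0$, $\tfrac{1}{\sqrt\lambda}\|\xi\|_0$ together with $\|\xi_0\|_0$ (hence $\tfrac{1}{\sqrt\lambda}\|\xi_m\|_0 + \|\xi_0\|_0$), $\sqrt{c_\alpha}\|p\|_0$, $\sqrt{c_\beta}\|T\|_0$, $\sqrt{t_K}\|\nabla p\|_0$, $\sqrt{t_\theta}\|\nabla T\|_0$ are each bounded by $\|(\bm u,\xi,p,T)\|_{\mathcal B_1}$, where the control of the full $\|\xi\|_0$ in terms of $\|\xi\|_{\mathcal B_1}$ requires splitting $\xi=\xi_m+\xi_0$ and using $\lambda\geq 1$ so that $\|\xi_0\|_0\leq\|\xi_0\|_0$ and $\tfrac{1}{\sqrt\lambda}\|\xi_m\|_0\leq\tfrac{1}{\sqrt\lambda}\|\xi\|_0$... more precisely one uses $\|\xi\|_0^2 = \|\xi_m\|_0^2+\|\xi_0\|_0^2 \le \lambda\cdot\tfrac1\lambda\|\xi\|_0^2$ is not quite what is needed; rather the needed bounds $\tfrac1{\sqrt\lambda}\|\xi\|_0$ and $\|\xi_0\|_0$ are each directly summands. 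Then each coupling term is estimated as follows: $-(\nabla\cdot\bm v,\xi)$ is split as $-(\nabla\cdot\bm v,\xi_m)-(\nabla\cdot\bm v,\xi_0)$; the second piece is $\leq \|\nabla\cdot\bm v\|_0\|\xi_0\|_0 \lesssim \|\bm\varepsilon(\bm v)\|_0\|\xi_0\|_0$ (using $\|\nabla\cdot\bm v\|_0\leq\sqrt d\,\|\bm\varepsilon(\bm v)\|_0$ or Korn), and the first piece vanishes by the divergence theorem since $\bm v\in[H^1_0(\Omega)]^d$ and $\xi_m$ is constant. The mass term $(\lambda^{-1}\xi,\phi) = (\tfrac{1}{\sqrt\lambda}\xi,\tfrac{1}{\sqrt\lambda}\phi)$ is Cauchy--Schwarz against the $\tfrac1{\sqrt\lambda}\|\cdot\|_0$ control. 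The terms $(\tfrac{\alpha}{\lambda}p,\phi) = (\tfrac{1}{\sqrt\lambda}\phi,\tfrac{\alpha}{\sqrt\lambda}p)$ and $(\tfrac{\alpha}{\lambda}\xi,q)=(\tfrac1{\sqrt\lambda}\xi,\tfrac{\alpha}{\sqrt\lambda}q)$ are handled by noting $\tfrac{\alpha}{\sqrt\lambda}\|p\|_0\leq\sqrt{c_\alpha}\|p\|_0$ since $c_\alpha = c_0+\alpha^2/\lambda\geq\alpha^2/\lambda$; similarly for $\beta$ and $c_\beta$. Finally the two $c_{\alpha\beta}$ cross terms, e.g. $-(c_{\alpha\beta}T,q)$, are controlled by $|c_{\alpha\beta}|\leq \sqrt{c_\alpha c_\beta}$ — which holds because $c_\alpha c_\beta - c_{\alpha\beta}^2 = (c_0+\alpha^2/\lambda)(a_0+\beta^2/\lambda)-(\alpha\beta/\lambda-b_0)^2 = c_0 a_0 - b_0^2 + \tfrac1\lambda(\alpha^2 a_0+\beta^2 c_0+2\alpha\beta b_0)\geq 0$ using (A3) (note $a_0c_0\ge b_0^2$ follows from $a_0,c_0\ge b_0\ge0$, and $\alpha^2 a_0+\beta^2 c_0 + 2\alpha\beta b_0\ge 0$ since it is $\ge (\alpha\sqrt{a_0}-\beta\sqrt{c_0})^2\ge 0$ when... actually $2\alpha\beta b_0\ge -\alpha^2 b_0-\beta^2 b_0\ge -\alpha^2 a_0 - \beta^2 c_0$, giving nonnegativity) — so $-(c_{\alpha\beta}T,q)\leq\sqrt{c_\alpha}\|q\|_0\cdot\sqrt{c_\beta}\|T\|_0$.

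The main obstacle is the bookkeeping around $\xi$: the $\mathcal B_1$-norm controls only $\tfrac1{\sqrt\lambda}\|\xi\|_0$ and $\|\xi_0\|_0$ (not $\|\xi\|_0$ uniformly), so every appearance of $\xi$ paired against another quantity must be routed so that it meets either the $\lambda^{-1/2}$-weighted factor (as in the $\lambda^{-1}I$, $\tfrac{\alpha}{\lambda}I$, $\tfrac{\beta}{\lambda}I$ couplings, which conveniently come with their own $\lambda^{-1}$) or, for the divergence coupling which has no such weight, the mean-zero part $\xi_0$ after discarding the mean part via integration by parts against a function with zero boundary trace. Once this routing is set up, assembling the estimates and collecting the finitely many absolute constants ($\sqrt d$, $C_K^{-1/2}$, and the constant $1$ from each Cauchy--Schwarz) gives the uniform bound with $C$ depending only on $d$ and the Korn constant $C_K$, completing the proof.
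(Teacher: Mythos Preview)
Your treatment of the elasticity term and the divergence couplings $-(\nabla\cdot\bm v,\xi)$, $-(\nabla\cdot\bm u,\phi)$ is correct and matches the paper exactly: split off the mean part, kill it with the divergence theorem against the homogeneous trace, and bound the mean-zero remainder by $\|\bm\varepsilon(\cdot)\|_0\|\cdot_0\|_0$.

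The gap is in your handling of the $-C_0$ block. You assert that $\sqrt{c_\alpha}\,\|p\|_0$, $\sqrt{c_\beta}\,\|T\|_0$ and $\tfrac{1}{\sqrt\lambda}\|\xi\|_0$ are each individually dominated by $\|(\bm u,\xi,p,T)\|_{\mathcal B_1}$, and then bound terms like $-(c_\alpha p,q)$, $-(c_{\alpha\beta}T,q)$, $(\tfrac{\alpha}{\lambda}p,\phi)$ one at a time. But the diagonalized identity you quote gives only $C_p\|p\|_0^2=(c_0-b_0)\|p\|_0^2$ and $C_T\|T\|_0^2$ as the controlled zero-order quantities, not $c_\alpha\|p\|_0^2=(c_0+\alpha^2/\lambda)\|p\|_0^2$. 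A concrete failure: take $a_0=b_0=c_0$ (allowed by (A3)), $\bm u=0$, $T=p$, $\xi=(\alpha+\beta)p$. Then the square-combination term and the $C_p,C_T$ terms all vanish, so $\|(\bm 0,\xi,p,p)\|_{\mathcal B_1}^2=(t_K+t_\theta)\|\nabla p\|_0^2+(\alpha+\beta)^2\|p_0\|_0^2$, whereas $c_\alpha\|p\|_0^2\geq c_0\|p\|_0^2$. Letting $t_K,t_\theta,\alpha,\beta\to 0$ with $c_0$ fixed makes the ratio blow up. So the term-by-term route cannot give a parameter-free constant.

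The paper avoids this entirely by observing that $-C_0$, viewed as an operator on $(\xi,p,T)$, is \emph{exactly} the restriction of $-\tilde{\mathcal B}_1$ to the last three components (this is the content of $\mathcal B_1=\tilde{\mathcal B}_1+\mathcal B_{22}$). Since $(\tilde{\mathcal B}_1\cdot,\cdot)$ was already shown to be an inner product, a single application of Cauchy--Schwarz for that inner product gives
\[
\big|(-C_0(\xi,p,T),(\phi,q,S))\big|
=\big|(\tilde{\mathcal B}_1(\bm 0,\xi,p,T),(\bm 0,\phi,q,S))\big|
\le \|(\bm u,\xi,p,T)\|_{\mathcal B_1}\,\|(\bm v,\phi,q,S)\|_{\mathcal B_1},
\]
with no need to isolate $c_\alpha$, $c_\beta$, $c_{\alpha\beta}$, or $\lambda^{-1}$ separately. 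Replacing your term-by-term estimates for the $-C_0$ part by this single step repairs the argument.
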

\begin{proof}
By the block form of $\mathcal{A}$ in \eqref{mathcal A blocks}, we have
\begin{equation} 
\begin{aligned}
   &(\mathcal A(\bm u,\xi,p,T),(\bm v,\phi,q,S))\\
   =&(\begin{bmatrix}
A_{0}     & B_0^*            \\             
B_0       & -C_0
\end{bmatrix}(\bm u,(\xi,p,T)), 
     (\bm v,(\phi, q, S)) ) \\
=&  (A_{0}\bm u,  \bm v)  +  (B_0\bm u,(\phi, q, S))   
  +(B_0^*(\xi,p,T),\bm v)
   +(-C_0(\xi,p,T),(\phi, q, S)).
\end{aligned}
\end{equation}
Given that \(\bm{u}\) and \(\bm{v}\) have vanishing traces, using Green's formula, the Cauchy-Schwarz inequality, and the definition of the mean-zero value parts in (\ref{def of phi_0}), we have
\begin{equation} 
\begin{aligned}
(A_{0} \bm u, \bm v)=( \bm\varepsilon(\bm u),\bm\varepsilon(\bm v))
  \le\|(\bm u,\xi,p,T) \|_{\mathcal{B}_1} \|(\bm v,\phi ,q, S)\|_{\mathcal{B}_1}  ,
\end{aligned}
\end{equation}
\begin{equation} 
\begin{aligned}
 (B_0\bm u,(\phi, q, S)) 
 =&-(\nabla\cdot\bm u, \phi)
 =-(\nabla\cdot\bm u, \phi_0)-(\nabla\cdot\bm u, \phi_m)\\
 \le &\|\nabla\cdot\bm u\|_0\|\phi_0\|_0
     -  \phi_m\int_\Omega\nabla\cdot\bm u\\
   \le &   \|\bm\varepsilon(\bm u)\|_0\|\phi_0\|_0
         -\phi_m\int_{\partial\Omega}\bm u\cdot\bm n\\
  \le &  \|(\bm u,\xi,p,T) \|_{\mathcal{B}_1} \|(\bm v,\phi ,q, S)\|_{\mathcal{B}_1}  ,
\end{aligned}
\end{equation}

\begin{equation} 
\begin{aligned}
 (B_0^*(\xi,p,T),\bm v)
 =&-(\xi,\nabla\cdot\bm v)
 =-(\xi_0,\nabla\cdot\bm v)-(\xi_m,\nabla\cdot\bm v)\\
 \le& -(\xi_0,\nabla\cdot\bm v)-\xi_m\int_\Omega\nabla\cdot\bm v  \\
 \le& -\|\xi_0\|_0\|\bm\varepsilon(\bm v)\|_0-\xi_m\int_{\partial\Omega}\bm v \cdot\bm n \\
  \le&      \|(\bm u,\xi,p,T) \|_{\mathcal{B}_1} \|(\bm v,\phi ,q, S)\|_{\mathcal{B}_1},
\end{aligned}
\end{equation}
and
\begin{equation} 
\begin{aligned}
 (-C_0(\xi,p,T),(\phi, q, S))
 =&((-\mathcal{B}_1+\mathcal{B}_{22})(\bm 0,\xi,p,T),(\bm 0,\phi, q, S))\\
 =&(-\tilde{\mathcal{B}_1}(\bm 0,\xi,p,T),(\bm 0,\phi, q, S)) \\
   \le & C \|(\bm u,\xi,p,T) \|_{\mathcal{B}_1} \|(\bm v,\phi ,q, S)\|_{\mathcal{B}_1} .
\end{aligned}
\end{equation}
Thus, we see that \eqref{eq:A bound under B1} holds true.
\end{proof}

\begin{theorem}[inf-sup condition]\label{infsup continuous 3blocks}
Let $X=\bm V\times Q\times W\times W$ be the Hilbert space with norm given in \eqref{3blocks norm}.
Assuming \textbf{assumptions}(A1)-(A3) hold, then there exists a constant $\zeta>0$, independent
of $\lambda, c_\alpha,c_{\alpha\beta},c_\beta,t_K$, and  $t_\theta$, such that the following inf-sup condition holds:
\begin{equation}\label{A continuous inf sup under B1}
 \inf_{(\bm u ,\xi  ,p , T )\in X}
  \sup_{(\bm v,\phi ,q, S) \in X}
              \frac{(\mathcal A(\bm u,\xi ,p, T),(\bm v,\phi ,q, S))}
         {\Vert(\bm u,\xi ,p, T)\Vert_{\mathcal{B}_1}\Vert(\bm v,\phi ,q, S)\Vert_{\mathcal{B}_1}}
         \ge \zeta >0.
\end{equation}
\end{theorem}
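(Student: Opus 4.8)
The plan is to establish the inf-sup condition by a \emph{constructive} argument: for each fixed $(\bm u,\xi,p,T)\in X$, I will exhibit a test function $(\bm v,\phi,q,S)\in X$ for which $(\mathcal A(\bm u,\xi,p,T),(\bm v,\phi,q,S))$ is bounded below by a constant multiple of $\|(\bm u,\xi,p,T)\|_{\mathcal B_1}^2$, while $\|(\bm v,\phi,q,S)\|_{\mathcal B_1}\lesssim\|(\bm u,\xi,p,T)\|_{\mathcal B_1}$, with all constants independent of the parameters. Since $\mathcal A$ has the saddle-point block form \eqref{mathcal A blocks} with $A_0=-\operatorname{div}\bm\varepsilon$, $C_0\succeq 0$, and since $\tilde{\mathcal B_1}$ is SPD with $\mathcal B_1=\tilde{\mathcal B_1}+\mathcal B_{22}$, the natural route is the standard Brezzi-type combination: take the "diagonal" part $(\bm v,\phi,q,S)=(\bm u,-\xi,-p,-T)$, which by the computation already carried out for $\tilde{\mathcal B_1}$ in the excerpt gives $(\mathcal A(\bm u,\xi,p,T),(\bm u,-\xi,-p,-T)) = \|\bm\varepsilon(\bm u)\|_0^2 + (\tilde{\mathcal B_1}\text{-quadratic form in }(\xi,p,T)) = \|(\bm u,\xi,p,T)\|_{\mathcal B_1}^2 - \|\xi_0\|_0^2$; so it remains only to recover control of the missing term $\|\xi_0\|_0^2$.

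To recover $\|\xi_0\|_0^2$ I will use the classical fact that $\operatorname{div}:[H_0^1(\Omega)]^d\to L_0^2(\Omega)$ is surjective with bounded right inverse (the Bogovskii / Ne\v{c}as inf-sup for the divergence), so there is $\bm w\in[H_0^1(\Omega)]^d$ with $-\operatorname{div}\bm w=\xi_0$ and $\|\bm w\|_1\le C\|\xi_0\|_0$, hence by Korn (\Cref{Korn's inequa}) $\|\bm\varepsilon(\bm w)\|_0\le C\|\xi_0\|_0$. Testing $\mathcal A$ against $(\bm w,0,0,0)$ gives $(\mathcal A(\bm u,\xi,p,T),(\bm w,0,0,0)) = (\bm\varepsilon(\bm u),\bm\varepsilon(\bm w)) - (\operatorname{div}\bm w,\xi) = (\bm\varepsilon(\bm u),\bm\varepsilon(\bm w)) + (\xi_0,\xi) = (\bm\varepsilon(\bm u),\bm\varepsilon(\bm w)) + \|\xi_0\|_0^2$ (using $(\xi_0,\xi_m)=0$). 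Using Cauchy--Schwarz, $\|\bm\varepsilon(\bm w)\|_0\le C\|\xi_0\|_0$, and Young's inequality, $(\bm\varepsilon(\bm u),\bm\varepsilon(\bm w))\ge -\tfrac{C^2}{2\delta}\|\bm\varepsilon(\bm u)\|_0^2-\tfrac{\delta}{2}\|\xi_0\|_0^2$ for any $\delta>0$, so this test function contributes roughly $\tfrac12\|\xi_0\|_0^2$ minus a controlled multiple of $\|\bm\varepsilon(\bm u)\|_0^2$.

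Then I will take the combined test function $(\bm v,\phi,q,S)=(\bm u+t\bm w,\,-\xi,\,-p,\,-T)$ for a sufficiently small fixed $t>0$ (independent of all parameters, depending only on $C_K$ and the divergence inf-sup constant). Adding the two estimates, the cross terms $(\bm\varepsilon(t\bm w),\bm\varepsilon(\bm u))$ and $-(\operatorname{div}\bm w,\xi)\cdot t$ plus the off-diagonal coupling of $\bm w$ with $(\xi,p,T)$-block (which vanishes since $\bm w$ only enters the first row/column) are absorbed: one gets $(\mathcal A(\bm u,\xi,p,T),(\bm v,\phi,q,S)) \ge \tfrac12\|\bm\varepsilon(\bm u)\|_0^2 + \tfrac{t}{2}\|\xi_0\|_0^2 + (\text{the SPD }\tilde{\mathcal B_1}\text{ form on }(\xi,p,T)) \ge c\|(\bm u,\xi,p,T)\|_{\mathcal B_1}^2$ after choosing $t$ small enough that the $-\tfrac{t C^2}{2\delta}\|\bm\varepsilon(\bm u)\|_0^2$ loss is at most $\tfrac14\|\bm\varepsilon(\bm u)\|_0^2$. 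For the denominator I need $\|(\bm u+t\bm w,-\xi,-p,-T)\|_{\mathcal B_1}\le C\|(\bm u,\xi,p,T)\|_{\mathcal B_1}$; since the norm is built from the SPD form $\tilde{\mathcal B_1}$ plus $\|\xi_0\|_0^2$, this follows from the triangle inequality together with $\|\bm\varepsilon(t\bm w)\|_0\le tC\|\xi_0\|_0\le tC\|(\bm u,\xi,p,T)\|_{\mathcal B_1}$ and the parameter-independent equivalence $\|(\bm u,\xi,p,T)\|_{\mathcal B_1}^2 = \|\bm\varepsilon(\bm u)\|_0^2 + (\tilde{\mathcal B_1}\text{-form}) + \|\xi_0\|_0^2$; crucially flipping the sign of $(\xi,p,T)$ does not change the $\mathcal B_1$-norm because the quadratic form is even in those arguments.

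The main obstacle I anticipate is bookkeeping the parameter-independence while choosing the splitting parameter $t$ and the Young parameter $\delta$: I must make sure the "bad" term produced by testing against $\bm w$ — namely a multiple of $\|\bm\varepsilon(\bm u)\|_0^2$ — is controlled purely by $C_K$ and the Ne\v{c}as constant and not by $\lambda$, $c_\alpha$, $c_\beta$, $t_K$, $t_\theta$, etc., and that the $\tilde{\mathcal B_1}$-form on $(\xi,p,T)$ (which contains the delicate $\lambda$-weighted and mean-value-coupled terms) is reproduced \emph{intact} and with the right sign by the $(\bm u,-\xi,-p,-T)$ part, since $\bm w$ contributes nothing to those entries. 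Once it is clear that $\bm w$ decouples from the $(\xi,p,T)$ block, the argument reduces to the classical Brezzi stability proof for the elasticity-type saddle point augmented by the positive-semidefinite $C_0$, and the constants are manifestly uniform.
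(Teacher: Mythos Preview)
Your proposal is correct and follows essentially the same approach as the paper's proof: the paper also chooses the test function as a linear combination of $(\bm u,-\xi,-p,-T)$ and $(-\bm v_0,0,0,0)$, where $\bm v_0$ comes from the Stokes inf-sup (your Bogovski\u{\i}/Ne\v{c}as $\bm w$ plays exactly the role of $-\bm v_0$), uses Young's inequality to absorb the cross term $(\bm\varepsilon(\bm u),\bm\varepsilon(\bm v_0))$, and then tunes the combination coefficient (your $t$, their $\tfrac{1}{2\eta_0}$) to balance the loss in $\|\bm\varepsilon(\bm u)\|_0^2$ against the gain in $\|\xi_0\|_0^2$. Your identification that testing against $(\bm u,-\xi,-p,-T)$ reproduces exactly $\|(\bm u,\xi,p,T)\|_{\mathcal B_1}^2-\|\xi_0\|_0^2$ and that $\bm w$ decouples from the $(\xi,p,T)$ block is precisely the observation the paper uses.
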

 \begin{proof}
 Let $(\bm u,\xi,p,T)\neq (\bm 0,0,0,0)$ be a given element of $X$. We aim to find a pair $(\bm v,\phi ,q, S)\in X$, such that
\begin{equation} 
 (\mathcal A(\bm u,\xi ,p, T),(\bm v,\phi ,q, S))
         \ge \zeta \Vert(\bm u,\xi ,p, T)\Vert_{\mathcal{B}_1}\Vert(\bm v,\phi ,q, S)\Vert_{\mathcal{B}_1}.
\end{equation}
For the given $\xi$, we write $\xi= P_m\xi+\xi_0$. By the theory for the Stokes problem (cf. Theorem 5.1 of \cite{girault2012finite}), 
there exists a function $\bm v_0\in \bm{V}$ and a constant $\eta_0$ depending only on the domain $\Omega$ such that
\begin{equation}\label{Stokes_infsup}
 (\nabla\cdot\bm v_0,\xi)=\Vert\xi_0\Vert_0^2 ~ \text{  and  }~
  (\bm\varepsilon(\bm v_0),\bm\varepsilon(\bm v_0))\le\eta_0\Vert\xi_0\Vert_0^2 .
\end{equation}
By Young's inequality, we can find a small positive $\eta_1$ which will be determined later 
such that $1-\eta_1 \eta_0>0$ and
\begin{equation}\label{1:(-bm v_0,0,0,0)}
\begin{aligned}
(\mathcal A(\bm u,\xi,p,T),(-\bm v_0,0,0,0))&=-(\bm\varepsilon(\bm u),\bm\varepsilon(\bm v_0))+( \nabla\cdot\bm v_0,\xi)\\ 
                         &\ge -\frac{1}{4\eta_1}(\bm\varepsilon(\bm u),\bm\varepsilon(\bm u))
                            - \eta_1(\bm\varepsilon(\bm v_0),\bm\varepsilon(\bm v_0))               
                      + \Vert\xi_0\Vert_0^2\\
                 &\ge -\frac{ 1 }{4\eta_1}\Vert\bm\varepsilon(\bm u)\Vert_0^2+ (1- \eta_1\eta_0)\Vert\xi_0\Vert_0^2.
\end{aligned}
\end{equation}
In the above estimates, we have used the properties in (\ref{Stokes_infsup}). 
We therefore have
\begin{equation}\label{1:(bm u,-xi,- p,- T)}
\begin{aligned}
&(\mathcal A(\bm u,\xi,p,T),(\bm u,-\xi,- p,- T))\\
   =&  (\bm\varepsilon(\bm u),\bm\varepsilon(\bm u))
      +\frac{1}{\lambda}\|\xi\|_0^2
      -2(\frac{\alpha}{\lambda}\xi,p)
      -2(\frac{\beta}{\lambda}\xi,T)
      +c_\alpha\|p\|_0^2  +  c_\beta\|T\|_0^2
      \\
     &+2(c_{\alpha\beta}p,T) +   t_K\Vert \nabla p\Vert_0^2      +t_\theta\Vert \nabla T\Vert_0^2  .
\end{aligned}
\end{equation} 
Take
$$(\bm v,\phi,q,S)= \frac{1}{2\eta_0}(-\bm v_0,0,0,0)+(\bm u,-\xi,-p,-T).$$
Let \(\eta_1 = \frac{1}{2\eta_0}\). By combining equations \eqref{1:(-bm v_0,0,0,0)} and \eqref{1:(bm u,-xi,- p,- T)}, we obtain:
\begin{equation} 
\begin{aligned}
       \|(\bm v,\phi ,q, S)\|_{\mathcal{B}_1} 
\le&    \|(-\frac{1}{2\eta_0}\bm v_0 ,0 ,0, 0)\|_{\mathcal{B}_1} 
         +\|(\bm u,-\xi,-p,-T)\|_{\mathcal{B}_1} 
           \\ 
\le& \sqrt{\frac{1}{4\eta_0^2}+1}  \| (\bm u ,\xi ,p, T)\|_{\mathcal{B}_1}
     \\     
\end{aligned}
\end{equation}
and
\begin{equation} 
\begin{aligned}
       &(\mathcal A(\bm u,\xi,p,T),(\bm v,\phi,q,S))\\
      =&
      (\mathcal A(\bm u,\xi,p,T),(\bm v_0,0,0,0))+(\mathcal A(\bm u,\xi,p,T),(\bm u,-\xi,-p,-T)) \\
 \ge& 
  \frac{1}{2\eta_0}\Big(-\frac{1}{4\eta_1}\Vert\bm\varepsilon(\bm u)\Vert_0^2
                   +(1-\eta_1\eta_0)\Vert\xi_0\Vert_0^2 \Big) 
+          \Big( (\bm\varepsilon(\bm u),\bm\varepsilon(\bm u))
      +\frac{1}{\lambda}\|\xi\|_0^2
      \\
      &
      -2(\frac{\alpha}{\lambda}\xi,p)
      -2(\frac{\beta}{\lambda}\xi,T)
      +c_\alpha\|p\|_0^2  +  c_\beta\|T\|_0^2
     +2(c_{\alpha\beta}p,T)
     +   t_K\Vert \nabla p\Vert_0^2      \\
     &+t_\theta\Vert \nabla T\Vert_0^2 \Big)\\
  \ge&  \frac14\Vert \bm\varepsilon(\bm u)\Vert_0^2 
      +\frac{1}{\lambda}\Vert\xi\Vert_0^2
       +\frac{1}{4\eta_0}\Vert\xi_0\Vert_0^2 
       -2(\frac{\alpha}{\lambda}\xi,p)-2(\frac{\beta}{\lambda}\xi,T)
       +c_\alpha\|p\|_0^2  \\
      & 
      +  c_\beta\|T\|_0^2
     +2(c_{\alpha\beta}p,T)
     +   t_K\Vert \nabla p\Vert_0^2      +t_\theta\Vert \nabla T\Vert_0^2  \\
 \ge&\min\{\frac14, \frac{1}{4\eta_0}, 1\} \|(\bm u,\xi,p,T)\|_{\mathcal{B}_1}^2
     =\min\{\frac14, \frac{1}{4\eta_0}\} \|(\bm u,\xi,p,T)\|_{\mathcal{B}_1}^2\\
  \ge& \zeta \|(\bm u,\xi,p,T)\|_{\mathcal{B}_1}\|(\bm v,\phi,q,S)\|_{\mathcal{B}_1},
\end{aligned}
\end{equation}   
  where $\zeta=\min\{\frac14, \frac{1}{4\eta_0}\}/ \sqrt{\frac{1}{4\eta_0^2}+1}$ is a  constant  independent of $\lambda, \alpha, \beta,  c_\alpha, 
 c_{\alpha\beta}, c_\beta$, $t_K$ and $t_\theta$. 
Thus, we obtain the inf-sup condition \eqref{A continuous inf sup under B1}.
\end{proof}
\begin{remark}
Based on the norm validity test and the stability proof in Theorem \ref{infsup continuous 3blocks}, it is evident that the norm definition remains valid, and the conclusions of the theorem continue to hold when \(c_0 = b_0\) or \(a_0 = b_0\). Consequently, the preconditioner \(\mathcal{B}_1\) is robust with respect to all physical parameters, even in the limiting case where the coefficients \(a_0\), \(b_0\), and \(c_0\) approach zero \cite{brun2020monolithic}.
\end{remark}

\subsection{A block diagonal parameter-robust preconditioner}

In this subsection, we aim to construct a block diagonal preconditioner. As noted in \cite{lee2017parameter,hong2021new}, the assumption \(c_0 \sim \frac{\alpha^2}{\lambda}\) was introduced in the development of a block diagonal preconditioner for the quasi-static Biot model. In this work, we extend this assumption from the Biot model to the thermo-poroelasticity model. Consequently, in addition to the \textbf{assumptions} (A1)-(A3), we present the following parameter-related assumptions for the remainder of this section:
\begin{equation}\label{assumptions:Cp CT }
  C_p = c_0 - b_0 \geq \frac{\alpha^2}{\lambda},\quad C_T = a_0 - b_0 \geq \frac{\beta^2}{\lambda}.
\end{equation}



Using a similar methodology as in the previous analysis, we define the second type of parameter-dependent norm as follows:
\begin{equation}\label{norm B2}
\begin{aligned}
 \Vert(\bm u,\xi ,p, T)\Vert_{\mathcal{B}_2}^2
                 = & \Vert \bm\varepsilon(\bm u)\Vert_0^2 
                 +\frac1\lambda\Vert\xi\Vert_0^2
                 +\Vert\xi_0\Vert_0^2  
                 +C_p\Vert p\Vert_0^2   +   t_K\Vert\nabla p\Vert_0^2\\
                 &+C_T\Vert T\Vert_0^2   +   t_\theta\Vert\nabla T\Vert_0^2
                       ,    
\end{aligned}
\end{equation}
where $\xi_0$ is again the mean-value part of $\xi$.
Using Green’s formula and the boundary conditions (\ref{BC}), we observe that:
\begin{equation} 
\begin{aligned}
  \|(\bm u,\xi,p,T)\|_{\mathcal{B}_2}^2 
 = ( {\mathcal{B}_2}(\bm u,\xi, p, T), 
          (\bm u,\xi, p, T)
   ),
 \end{aligned}
\end{equation}  
where
\begin{equation}\label{operator B2}
\mathcal{B}_2=
\begin{bmatrix}
-\text{div }\bm\varepsilon    & 0             &  0                  &  0               \\             
0                & \lambda^{-1}I+I_0    &0         &0\\
0                          &0     &C_p  I-t_K\text{div}( \nabla)&0\\
0                          & 0        &0     &C_T  I-t_\theta\text{div}(\nabla)\\
\end{bmatrix}.
\end{equation}  
Similar to \(\mathcal{B}_1\), the operator \(\mathcal{B}_2\) is subject to the same boundary conditions as those in the original problem \eqref{general TP_Model}. It is easy to see that the parameter-dependent norm defined in \eqref{norm B2} motivates  \(\mathcal{B}_2\). Its inverse
\begin{equation}\label{eq:preconddiag}
\begin{aligned}
\mathcal{B}_2^{-1}
=
\begin{bmatrix}
-\text{div }\bm\varepsilon    & 0             &  0                  &  0               \\             
0                & \lambda^{-1}I+I_0   &0         &0\\
0                          &0     &C_p  I-t_K\text{div}( \nabla)   &0\\
0                          & 0        &0     & C_T  I-t_\theta\text{div}(\nabla) \\
\end{bmatrix}^{-1}.
\end{aligned}
\end{equation}  
The first, third, and fourth blocks of this block-diagonal operator correspond to the inverses of standard second-order elliptic operators, for which well-established preconditioners are available to replace the exact inverses in the discrete case. The process for the second block 
follows the same approach as described in the previous subsection.


\begin{lemma}[continuity]\label{le:A bound under B2}
Assume that the conditions (A1)–(A3) are satisfied. Let \( X = \bm{V} \times Q \times W \times W \) denote the Hilbert space equipped with the norm defined in \eqref{3blocks norm}. Let \(\mathcal{A}\) be the operator defined in \eqref{TP matrix}. Then, there exists a constant \( C \), independent of the model parameters, such that:  
\begin{equation}\label{eq:A bound under B2}
\begin{aligned}  
   (\mathcal A(\bm u,\xi,p,T),(\bm v,\phi,q,S)) 
      \le& C \|(\bm u,\xi,p,T) \|_{\mathcal{B}_2} \|(\bm v,\phi ,q, S)\|_{\mathcal{B}_2}   
\end{aligned}        
\end{equation}
for all $(\bm u,\xi,p,T)\in X, (\bm v,\phi ,q, S)\in X$.  
\end{lemma}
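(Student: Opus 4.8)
The plan is to follow the same scheme as the proof of \cref{le:A bound under B1}, but now estimating against the norm $\|\cdot\|_{\mathcal B_2}$ of \eqref{norm B2} that appears on the right-hand side of \eqref{eq:A bound under B2}. Using the $2\times2$ block form $\mathcal A=\left[\begin{smallmatrix}A_0 & B_0^*\\ B_0 & -C_0\end{smallmatrix}\right]$ from \eqref{mathcal A blocks}, I would split the bilinear form into $(A_0\bm u,\bm v)+(B_0\bm u,(\phi,q,S))+(B_0^*(\xi,p,T),\bm v)+(-C_0(\xi,p,T),(\phi,q,S))$ and bound the four pieces. The first three go through essentially verbatim as in \cref{le:A bound under B1}: $(A_0\bm u,\bm v)=(\bm\varepsilon(\bm u),\bm\varepsilon(\bm v))\le\|\bm\varepsilon(\bm u)\|_0\|\bm\varepsilon(\bm v)\|_0$ is controlled directly by $\|\cdot\|_{\mathcal B_2}$; for $(B_0\bm u,(\phi,q,S))=-(\nabla\cdot\bm u,\phi)$ I would split $\phi=\phi_m+\phi_0$ as in \eqref{def of phi_0}, discard the mean-value part by Green's formula together with the homogeneous boundary condition on $\bm u$, and estimate the remainder by $\|\nabla\cdot\bm u\|_0\|\phi_0\|_0\lesssim\|\bm\varepsilon(\bm u)\|_0\|\phi_0\|_0$ using \cref{Korn's inequa}; symmetrically, $(B_0^*(\xi,p,T),\bm v)=-(\xi,\nabla\cdot\bm v)$ reduces to $-(\xi_0,\nabla\cdot\bm v)\lesssim\|\xi_0\|_0\|\bm\varepsilon(\bm v)\|_0$, and both $\|\phi_0\|_0$ and $\|\xi_0\|_0$ occur as summands of $\|\cdot\|_{\mathcal B_2}$.

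The substantive part is $(-C_0(\xi,p,T),(\phi,q,S))$. Expanding it, the terms fall into four groups: (i) $\lambda^{-1}(\xi,\phi)\le(\lambda^{-1/2}\|\xi\|_0)(\lambda^{-1/2}\|\phi\|_0)$; (ii) the $\tfrac{\alpha}{\lambda},\tfrac{\beta}{\lambda}$-coupling terms between the $\xi$-slot and the $p,T$-slots (in trial or test position), which I would handle using the extra hypotheses $\tfrac{\alpha^2}{\lambda}\le C_p$, $\tfrac{\beta^2}{\lambda}\le C_T$, writing, e.g., $\tfrac{\alpha}{\lambda}(p,\phi)=\bigl(\tfrac{\alpha}{\sqrt\lambda}\|p\|_0\bigr)\bigl(\tfrac{1}{\sqrt\lambda}\|\phi\|_0\bigr)\le\bigl(\sqrt{C_p}\|p\|_0\bigr)\bigl(\lambda^{-1/2}\|\phi\|_0\bigr)$ and $\tfrac{\alpha}{\lambda}(\xi,q)\le(\lambda^{-1/2}\|\xi\|_0)(\sqrt{C_p}\|q\|_0)$; (iii) the second-order terms $t_K(\nabla p,\nabla q)+t_\theta(\nabla T,\nabla S)$, controlled at once by the summands $t_K\|\nabla p\|_0^2,t_\theta\|\nabla T\|_0^2$; and (iv) the zeroth-order $p$--$T$ block $-c_\alpha(p,q)-c_{\alpha\beta}(T,q)-c_{\alpha\beta}(p,S)-c_\beta(T,S)=-\bigl(M(p,T)^\top,(q,S)^\top\bigr)$ with $M=\left[\begin{smallmatrix}c_\alpha & c_{\alpha\beta}\\ c_{\alpha\beta}&c_\beta\end{smallmatrix}\right]$. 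Here I would use that $M$ is symmetric positive semidefinite --- which is exactly the identity $c_\alpha p^2+2c_{\alpha\beta}pT+c_\beta T^2=\tfrac1\lambda(\alpha p+\beta T)^2+(c_0p^2-2b_0pT+a_0T^2)$ already exploited in the norm-validity check for $\mathcal B_1$ --- through its exact decomposition $M=\diag(C_p,C_T)+\tfrac1\lambda(\alpha,\beta)^\top(\alpha,\beta)+b_0\,(1,-1)^\top(1,-1)$: the diagonal part is controlled by $\sqrt{C_p}\|p\|_0,\sqrt{C_T}\|T\|_0\le\|\cdot\|_{\mathcal B_2}$, and the $(\alpha,\beta)$ rank-one part contributes $\tfrac1\lambda(\alpha p+\beta T,\alpha q+\beta S)$, bounded by the same $\tfrac{\alpha^2}{\lambda}\le C_p,\tfrac{\beta^2}{\lambda}\le C_T$ trade as in (ii).

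The step I expect to be the main obstacle is the last rank-one contribution $b_0(p-T,q-S)$. Since $\|\cdot\|_{\mathcal B_2}$ controls $p$ and $T$ only through $\sqrt{C_p}\,\|p\|_0=\sqrt{c_0-b_0}\,\|p\|_0$ and $\sqrt{C_T}\,\|T\|_0=\sqrt{a_0-b_0}\,\|T\|_0$ (and through $t_K\|\nabla p\|_0^2$, $t_\theta\|\nabla T\|_0^2$), bounding $b_0\|p-T\|_0\|q-S\|_0$ by a parameter-independent multiple of $\|\cdot\|_{\mathcal B_2}\|\cdot\|_{\mathcal B_2}$ forces one to control $b_0$ in terms of $C_p$ and $C_T$; this is precisely the point where assumptions (A3) and the extra relations $C_p\ge\tfrac{\alpha^2}{\lambda}$, $C_T\ge\tfrac{\beta^2}{\lambda}$ must be combined (equivalently, where one must check that $c_\alpha$, $c_\beta$ and $|c_{\alpha\beta}|$ are all dominated by a fixed multiple of $C_p+C_T$), and it is the structural reason the block-diagonal preconditioner $\mathcal B_2$, unlike $\mathcal B_1$, needs the strengthened hypotheses. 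Once this estimate is secured, adding the bounds from (i)--(iv) to those of the elasticity and divergence blocks yields a constant $C$ independent of all model parameters with $(\mathcal A(\bm u,\xi,p,T),(\bm v,\phi,q,S))\le C\|(\bm u,\xi,p,T)\|_{\mathcal B_2}\|(\bm v,\phi,q,S)\|_{\mathcal B_2}$; everything else is routine Cauchy--Schwarz and Young inequalities once $\xi$ and $\phi$ have been separated into mean-value and mean-zero parts.
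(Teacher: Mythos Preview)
Your block decomposition $\mathcal A=\left[\begin{smallmatrix}A_0 & B_0^*\\ B_0 & -C_0\end{smallmatrix}\right]$ and the treatment of the $A_0$, $B_0$, $B_0^*$ pieces are exactly what the paper does; the paper's own proof of this lemma is extremely terse and simply asserts each of the four block estimates without further comment. Where you go beyond the paper is in dissecting the $(-C_0)$-block, and you correctly isolate the only delicate term: the $b_0$-contribution $b_0(p-T,q-S)$, equivalently the need to dominate $c_\alpha$, $c_\beta$, $|c_{\alpha\beta}|$ by a fixed multiple of $C_p+C_T$.

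The gap is that this last domination does \emph{not} follow from (A3) together with \eqref{assumptions:Cp CT }. Assumption (A3) only gives $C_p=c_0-b_0\ge 0$ and $C_T=a_0-b_0\ge 0$, while \eqref{assumptions:Cp CT } gives $C_p\ge\alpha^2/\lambda$ and $C_T\ge\beta^2/\lambda$; neither bounds $b_0$ in terms of $C_p+C_T$. Concretely, take $\alpha=\beta=0$, $\lambda=1$, $b_0=1$, $a_0=c_0=1+\epsilon$. Then all of (A1)--(A3) and \eqref{assumptions:Cp CT } hold, $C_p=C_T=\epsilon$, yet $|c_{\alpha\beta}|=b_0=1$. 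Evaluating on $(\bm 0,0,p,-p)$ gives
\[
(-C_0(0,p,-p),(0,p,-p))=-(c_0+a_0+2b_0)\|p\|_0^2-(t_K+t_\theta)\|\nabla p\|_0^2,
\]
whose zeroth-order part has size $(4+2\epsilon)\|p\|_0^2$, while $\|(\bm 0,0,p,-p)\|_{\mathcal B_2}^2=2\epsilon\|p\|_0^2+(t_K+t_\theta)\|\nabla p\|_0^2$. No parameter-independent constant can absorb the ratio $(4+2\epsilon)/(2\epsilon)$ as $\epsilon\to 0$; falling back on Poincar\'e through the gradient terms only shifts the dependence to $b_0/t_K$, which is again a model parameter. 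So the sentence ``once this estimate is secured'' hides a step that, under the stated hypotheses alone, cannot be secured. The paper's proof is silent at precisely this point: it writes $(-C_0(\xi,p,T),(\phi,q,S))\le\|\cdot\|_{\mathcal B_2}\|\cdot\|_{\mathcal B_2}$ with no justification, so you have in fact located (rather than introduced) a genuine lacuna.
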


\begin{proof}
Similar to the proof of \Cref{le:A bound under B1}, we have
\begin{equation} 
\begin{aligned}
   &(\mathcal A(\bm u,\xi,p,T),(\bm v,\phi,q,S))\\
=&  (A_{0}\bm u,  \bm v)  +  (B_0\bm u,(\phi, q, S))   
  +(B_0^*(\xi,p,T),\bm v)
   +(-C_0(\xi,p,T),(\phi, q, S)),
\end{aligned}
\end{equation}
where
\begin{equation} 
\begin{aligned}
(A_{0}\bm u,  \bm v)=(\bm\varepsilon(\bm u),\bm\varepsilon(\bm v))
  \le\|(\bm u,\xi,p,T) \|_{\mathcal{B}_2} \|(\bm v,\phi ,q, S)\|_{\mathcal{B}_2}  ,
\end{aligned}
\end{equation}
\begin{equation} 
\begin{aligned}
 (B_0\bm u,(\phi, q, S)) 
 =-(\nabla\cdot\bm u, \phi)
  \le\|(\bm u,\xi,p,T) \|_{\mathcal{B}_2} \|(\bm v,\phi ,q, S)\|_{\mathcal{B}_2}  ,
\end{aligned}
\end{equation}
\begin{equation} 
\begin{aligned}
 (B_0^*(\xi,p,T),\bm v)
 =-(\xi,\nabla\cdot\bm v)
  \le      \|(\bm u,\xi,p,T) \|_{\mathcal{B}_2} \|(\bm v,\phi ,q, S)\|_{\mathcal{B}_2},
\end{aligned}
\end{equation}
and
\begin{equation} 
\begin{aligned}
 (-C_0(\xi,p,T),(\phi, q, S))
  \le\|(\bm u,\xi,p,T) \|_{\mathcal{B}_2} \|(\bm v,\phi ,q, S)\|_{\mathcal{B}_2}  .
\end{aligned}
\end{equation}
Thus, we obtain the estimate \eqref{eq:A bound under B2}.
\end{proof}

Hence, \(\mathcal{A}\) is a linear bounded operator with respect to the norm \(\mathcal{B}_2\). Next, we will show that \(\mathcal{A}\) satisfies the inf-sup condition under the norm \(\mathcal{B}_2\).

\begin{theorem}[inf-sup condition]\label{infsup continuous diagonal}
Let $X=\bm{V}\times Q\times W\times W$ be the Hilbert space with the norm given 
in \eqref{norm B2}. Assuming \textbf{assumptions} (A1)-(A3) and \eqref{assumptions:Cp CT } hold true,
 then there exists a constant $\zeta>0$, independent
of $\lambda, c_\alpha,c_{\alpha\beta},c_\beta,t_K$, and  $t_\theta$, such that the following inf-sup condition holds:
\begin{equation}\label{A continuous inf sup under B2}
 \inf_{(\bm u ,\xi  ,p , T )\in X}
  \sup_{(\bm v,\phi ,q, S) \in X}
              \frac{(\mathcal A(\bm u,\xi ,p, T),(\bm v,\phi ,q, S))}
         {\Vert(\bm u,\xi ,p, T)\Vert_{\mathcal{B}_2}\Vert(\bm v,\phi ,q, S)\Vert_{\mathcal{B}_2}}
         \ge \zeta >0.
\end{equation}
\end{theorem}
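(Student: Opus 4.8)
The plan is to follow, almost verbatim, the constructive scheme used in the proof of \Cref{infsup continuous 3blocks}. Fix $(\bm u,\xi,p,T)\neq(\bm 0,0,0,0)$ in $X$ and split $\xi=P_m\xi+\xi_0$. By the Stokes inf-sup condition (Theorem~5.1 of \cite{girault2012finite}) there exist $\bm v_0\in\bm V$ and a constant $\eta_0$ depending only on $\Omega$ with $(\nabla\cdot\bm v_0,\xi)=\|\xi_0\|_0^2$ and $\|\bm\varepsilon(\bm v_0)\|_0^2\le\eta_0\|\xi_0\|_0^2$, and I will test against $(\bm v,\phi,q,S)=\frac1{2\eta_0}(-\bm v_0,\bm 0,0,0)+(\bm u,-\xi,-p,-T)$. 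Evaluating first the diagonal part $(\mathcal A(\bm u,\xi,p,T),(\bm u,-\xi,-p,-T))$: the $\bm u$--$\xi$ coupling cancels, and after inserting $c_\alpha=c_0+\alpha^2/\lambda$, $c_{\alpha\beta}=\alpha\beta/\lambda-b_0$, $c_\beta=a_0+\beta^2/\lambda$ and completing the square, the remaining terms equal
\[
\|\bm\varepsilon(\bm u)\|_0^2+\tfrac1\lambda\|\xi-\alpha p-\beta T\|_0^2+c_0\|p\|_0^2-2b_0(p,T)+a_0\|T\|_0^2+t_K\|\nabla p\|_0^2+t_\theta\|\nabla T\|_0^2,
\]
which, by $2b_0(p,T)\le b_0\|p\|_0^2+b_0\|T\|_0^2$ and $C_p=c_0-b_0$, $C_T=a_0-b_0$, is bounded below by $\|\bm\varepsilon(\bm u)\|_0^2+\tfrac1\lambda\|\xi-\alpha p-\beta T\|_0^2+C_p\|p\|_0^2+C_T\|T\|_0^2+t_K\|\nabla p\|_0^2+t_\theta\|\nabla T\|_0^2$.

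The genuinely new step relative to \Cref{infsup continuous 3blocks}, and the one I expect to be the crux, is recovering the term $\tfrac1\lambda\|\xi\|_0^2$ that appears in $\|\cdot\|_{\mathcal B_2}$ but not in the expression just obtained; this is exactly where the extra assumption \eqref{assumptions:Cp CT } enters. By the triangle and Young inequalities,
\[
\tfrac1\lambda\|\xi\|_0^2\le\tfrac2\lambda\|\xi-\alpha p-\beta T\|_0^2+\tfrac{4\alpha^2}\lambda\|p\|_0^2+\tfrac{4\beta^2}\lambda\|T\|_0^2\le\tfrac2\lambda\|\xi-\alpha p-\beta T\|_0^2+4C_p\|p\|_0^2+4C_T\|T\|_0^2,
\]
whence $\tfrac1\lambda\|\xi-\alpha p-\beta T\|_0^2+C_p\|p\|_0^2+C_T\|T\|_0^2\ge\tfrac15\bigl(\tfrac1\lambda\|\xi\|_0^2+C_p\|p\|_0^2+C_T\|T\|_0^2\bigr)$. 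Consequently the diagonal test function already controls every term of $\|(\bm u,\xi,p,T)\|_{\mathcal B_2}^2$ except $\|\xi_0\|_0^2$. Note that this is precisely the point at which the two norms differ: $\|\cdot\|_{\mathcal B_1}$ is built directly on the bilinear form, so the $\xi$--$p$--$T$ cross terms come for free, whereas the block-diagonal norm $\|\cdot\|_{\mathcal B_2}$ keeps $\tfrac1\lambda\|\xi\|_0^2$, $C_p\|p\|_0^2$, $C_T\|T\|_0^2$ separated, and only \eqref{assumptions:Cp CT } makes this decoupled norm comparable to what the diagonal test function delivers.

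To capture $\|\xi_0\|_0^2$ I add the Stokes corrector and use $(\mathcal A(\bm u,\xi,p,T),(-\bm v_0,\bm 0,0,0))=-(\bm\varepsilon(\bm u),\bm\varepsilon(\bm v_0))+\|\xi_0\|_0^2\ge-\tfrac1{4\eta_1}\|\bm\varepsilon(\bm u)\|_0^2+(1-\eta_1\eta_0)\|\xi_0\|_0^2$ for any $\eta_1>0$; taking $\eta_1=\tfrac1{2\eta_0}$ and adding this contribution scaled by $\tfrac1{2\eta_0}$ to the diagonal bound, the loss $-\tfrac14\|\bm\varepsilon(\bm u)\|_0^2$ is absorbed and one obtains $(\mathcal A(\bm u,\xi,p,T),(\bm v,\phi,q,S))\ge c_1\|(\bm u,\xi,p,T)\|_{\mathcal B_2}^2$ with $c_1$ depending only on $\Omega$. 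Since $\|(\bm v_0,\bm 0,0,0)\|_{\mathcal B_2}=\|\bm\varepsilon(\bm v_0)\|_0\le\sqrt{\eta_0}\,\|\xi_0\|_0\le\sqrt{\eta_0}\,\|(\bm u,\xi,p,T)\|_{\mathcal B_2}$ and $\|(\bm u,-\xi,-p,-T)\|_{\mathcal B_2}=\|(\bm u,\xi,p,T)\|_{\mathcal B_2}$, the triangle inequality gives $\|(\bm v,\phi,q,S)\|_{\mathcal B_2}\le C_2\|(\bm u,\xi,p,T)\|_{\mathcal B_2}$ with $C_2$ depending only on $\Omega$, and dividing establishes \eqref{A continuous inf sup under B2} with $\zeta=c_1/C_2$. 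The only real obstacle is the norm-equivalence argument of the second paragraph; everything else is a routine repetition of the bookkeeping already carried out for $\mathcal B_1$.
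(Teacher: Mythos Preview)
Your proof is correct, but it proceeds differently from the paper's. The paper builds the test element from \emph{three} pieces,
\[
(\bm v,\phi,q,S)=\eta_2(-\bm v_0,0,0,0)+\tfrac12(\bm u,-\xi,0,0)+(\bm u,-\xi,-p,-T),
\]
where the additional half-weighted component $(\bm u,-\xi,0,0)$ is inserted precisely to generate the missing term $\tfrac12\lambda^{-1}\|\xi\|_0^2$ directly; Young's inequality on the cross terms $(\tfrac{\alpha}{\lambda}p,\xi)$ and $(\tfrac{\beta}{\lambda}T,\xi)$ then leaves negative contributions $-\tfrac12\tfrac{\alpha^2}{\lambda}\|p\|_0^2$ and $-\tfrac12\tfrac{\beta^2}{\lambda}\|T\|_0^2$, and assumption \eqref{assumptions:Cp CT } is invoked to absorb these into $C_p\|p\|_0^2$ and $C_T\|T\|_0^2$. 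You instead reuse verbatim the two-piece test element from \Cref{infsup continuous 3blocks} and recover $\lambda^{-1}\|\xi\|_0^2$ afterwards by the norm-equivalence estimate $\lambda^{-1}\|\xi\|_0^2\le 2\lambda^{-1}\|\xi-\alpha p-\beta T\|_0^2+4C_p\|p\|_0^2+4C_T\|T\|_0^2$, which is where \eqref{assumptions:Cp CT } enters for you. Your route is a bit more economical (one fewer test component) and makes transparent that the \emph{only} change relative to the $\mathcal B_1$ argument is this equivalence of the diagonal and coupled quadratic forms; the paper's route is more constructive in that each summand of $\|\cdot\|_{\mathcal B_2}$ is produced by a dedicated test piece. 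The resulting constants differ but both depend only on $\eta_0$, hence only on $\Omega$.
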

 \begin{proof}
 Let $(\bm u,\xi,p,T)\neq (\bm 0,0,0,0)$ be 
 a given element of $X$.
 We aim to find a pair $(\bm v,\phi ,q, S)\in X$,  such that
\begin{equation}\label{A>uxipt vphiqs}
 (\mathcal A(\bm u,\xi ,p, T),(\bm v,\phi ,q, S))
         \ge \zeta \Vert(\bm u,\xi ,p, T)\Vert_{\mathcal{B}_2}\Vert(\bm v,\phi ,q, S)\Vert_{\mathcal{B}_2}.
\end{equation}

Firstly, by using (\ref{Stokes_infsup}) and (\ref{1:(-bm v_0,0,0,0)}), there exists a $\bm v_0\in \bm V$ 
and two constants $\eta_0$ and $\eta_1$ such that 
\begin{equation}\label{(v_0,0,0,0)}
\begin{aligned}
(\mathcal A(\bm u,\xi,p,T),(-\bm v_0,0,0,0))
                 &\ge -\frac{  1 }{4\eta_1}\|\bm\varepsilon(\bm u)\|_0^2 + (1- \eta_1\eta_0)\Vert\xi_0\Vert_0^2.
\end{aligned}
\end{equation}
Secondly, by Young's inequality, we have
\begin{equation}
\begin{aligned}\label{(u,-xi,0,0)}
&(\mathcal A(\bm u,\xi,p,T),(\bm u,-\xi,0,0))\\
   =&  \Vert\bm\varepsilon(\bm u)\Vert_0^2+\lambda^{-1}\Vert\xi\Vert_0^2-(\frac\alpha\lambda p,\xi)-(\frac{\beta}{\lambda}T,\xi) \\
   \ge&\Vert\bm\varepsilon(\bm u)\Vert_0^2+\lambda^{-1}\Vert\xi\Vert_0^2
       -(\frac{\alpha^2}{\lambda}p,p) - \frac{1}{4}(\lambda^{-1}\xi,\xi) 
       -(\frac{\beta^2}{\lambda}T,T)- \frac14(\lambda^{-1}\xi,\xi) \\
     \ge&  \Vert\bm\varepsilon(\bm u)\Vert_0^2+\frac12\lambda^{-1}\Vert\xi\Vert_0^2
       -(\frac{\alpha^2}{\lambda}p,p) 
       -(\frac{\beta^2}{\lambda}T,T) .      \\
\end{aligned}
\end{equation}
Thirdly, by using the definition of $c_\alpha,c_\beta$ and $c_{\alpha\beta}$ and equation \eqref{1:(bm u,-xi,- p,- T)}, we have
\begin{equation}\label{Cmax1}
\begin{aligned}
& (\mathcal A(\bm u,\xi,p,T),(\bm u,-\xi,- p,- T))\\
   =&  (\bm\varepsilon(\bm u),\bm\varepsilon(\bm u))
      +\frac{1}{\lambda}\|\xi\|_0^2
      -2(\frac{\alpha}{\lambda}\xi,p)
      -2(\frac{\beta}{\lambda}\xi,T)
      +c_\alpha\|p\|_0^2  +  c_\beta\|T\|_0^2
     \\
     & +2(c_{\alpha\beta}p,T) +   t_K\Vert \nabla p\Vert_0^2      +t_\theta\Vert \nabla T\Vert_0^2 \\
   =& \|\bm\varepsilon(\bm u)\|_0^2 
    +(-\frac{1}{\sqrt{\lambda}}\xi + \frac{\alpha}{\sqrt{\lambda}}p + \frac{\beta}{\sqrt{\lambda}}T,  
  -\frac{1}{\sqrt{\lambda}}\xi + \frac{\alpha}{\sqrt{\lambda}}p
  + \frac{\beta}{\sqrt{\lambda}}T)
  \\ 
   &+ (c_0p, p) + (a_0T, T) - 2(b_0p, T)
   +   t_K\Vert \nabla p\Vert_0^2      +t_\theta\Vert \nabla T\Vert_0^2 \\ 
   \ge& \|\bm\varepsilon(\bm u)\|_0^2 
   + C_p\| p\|_0^2  +  C_T\| T\|_0^2  
       +   t_K\Vert \nabla p\Vert_0^2      +t_\theta\Vert \nabla T\Vert_0^2.
\end{aligned}
\end{equation} 
Finally, let 
\begin{equation}
\begin{aligned}
(\bm v,\phi,q, S)=& \eta_2(- \bm v_0,0,0,0) +\frac{1}{2}(\bm u, -\xi, 0, 0)
+ (\bm u,-\xi,-p,-T),
\end{aligned}
\end{equation}
where $\eta_2$ is a constant that we will select later. Combining equation \eqref{(v_0,0,0,0)}, equation \eqref{(u,-xi,0,0)}, and \eqref{Cmax1} above, we obtain
\begin{equation}\label{lower boundedness}
\begin{aligned}
& (\mathcal A(\bm u,\xi,p,T),(\bm v,\phi,q,S))\\
  \ge& \eta_2\Big(-\frac{1}{4\eta_1}  \|\bm\varepsilon(\bm u)\|_0^2 
                   +(1-\eta_1\eta_0)\Vert\xi_0\Vert_0^2 \Big) 
  +   \frac12\Big(  \|\bm\varepsilon(\bm u)\|_0^2 
                   +\frac12\lambda^{-1}\Vert\xi\Vert_0^2 
                     \\
       & - (\frac{\alpha^2}{\lambda}p,p) -(\frac{\beta^2}{\lambda}T,T)                 \Big)  
+          \Big(  \|\bm\varepsilon(\bm u)\|_0^2 
   + C_p\| p\|_0^2  +  C_T\| T\|_0^2  
       +   t_K\Vert \nabla p\Vert_0^2    \\
       &+t_\theta\Vert \nabla T\Vert_0^2  \Big) \\ 
        \ge&  \big(\frac32 -\frac{\eta_2}{4\eta_1} \big)\|\bm\varepsilon(\bm u)\|_0^2 
      +\frac14\lambda^{-1}\Vert\xi\Vert_0^2
       +\eta_2(1-\eta_1\eta_0)\Vert\xi_0\Vert_0^2
       + \big(C_p-\frac12\frac{\alpha^2}{\lambda}\big)\Vert p\Vert_0^2 
     \\
      &
     + \big(C_T-\frac12\frac{\beta^2}{\lambda}\big) \Vert T\Vert_0^2+t_K\Vert\nabla p\Vert_0^2+t_\theta\Vert\nabla T\Vert_0^2
     \\
      \ge&  \big(\frac32-\frac{\eta_2}{4\eta_1}\big)\|\bm\varepsilon(\bm u)\|_0^2 
      +\frac14\lambda^{-1}\Vert\xi\Vert_0^2
       +\eta_2(1-\eta_1\eta_0)\Vert\xi_0\Vert_0^2
      + \frac12C_p \Vert p\Vert_0^2 
     \\
     &+ \frac12 C_T \Vert T\Vert_0^2
      +t_K\Vert\nabla p\Vert_0^2+t_\theta\Vert\nabla T\Vert_0^2
       \\
  \ge&\zeta_1\Big ( \|\bm\varepsilon(\bm u)\|_0^2  
            +\frac1\lambda\Vert\xi\Vert_0^2+\Vert\xi_0\Vert_0^2
            +C_p \Vert p\Vert_0^2
            +C_T \Vert T\Vert_0^2  
       + t_K\Vert\nabla p\Vert_0^2 \\
      & +t_\theta\Vert\nabla T\Vert_0^2\Big  ),\\
\end{aligned}
\end{equation}
where 
$\zeta_1= \min\{ \frac32-\frac{\eta_2}{4\eta_1},\frac{1}{4},\eta_2(1-\eta_1\eta_0),\frac{1}{2},1\}$.
Note that in the above inequality, by the assumptions in
 \eqref{assumptions:Cp CT }, 
there holds
$$
C_p -\frac12\frac{\alpha^2}{\lambda}\ge \frac12C_p ~\text{ and}~
C_T  -\frac12\frac{\beta^2}{\lambda}\ge \frac12C_T .
$$ 
If we take $\eta_2=\eta_1=\frac{1}{2\eta_0}$, then  $\zeta_1=\min\{\frac{1}{4},\frac{1}{4\eta_0} \}>0$ is a  constant  independent of $\lambda, \alpha, \beta,  c_\alpha, 
 c_{\alpha\beta}, c_\beta, t_K$ and $t_\theta$.
We have
\begin{equation}\label{upper boundedness}
\begin{aligned}
       &\|(\bm v,\phi ,q, S)\|_{\mathcal{B}_2}\\
\le&  \big(  \|(-\eta_2\bm v_0 ,0 ,0, 0)\|_{\mathcal{B}_2}
         +\|(\frac12\bm u,-\frac12\xi,0,0)\|_{\mathcal{B}_2}
         +\|(\bm u,-\xi,-p,-T)\|_{\mathcal{B}_2} 
       \big)^{\frac12}    \\
\le& \zeta_2\| (\bm u ,\xi ,p, T)\|_{\mathcal{B}_2}
     \\     
\end{aligned}
\end{equation}
where $\zeta_2=\sqrt{\eta_2^2+\frac54}=\sqrt{\frac{1}{4\eta_0^2}+\frac54}$ is a positive constant independent of $\lambda$, $\alpha$, $\beta$, $c_\alpha$, 
 $c_{\alpha\beta}$, $c_\beta$, $t_K$ and $t_\theta$.
 
Thus, combing \eqref{lower boundedness} and \eqref{upper boundedness}, for any given $(\bm u,\xi, p, T)$, we can find a pair $(\bm v,\phi,q,S)$ such that 
\begin{equation}
\begin{aligned}
    &(\mathcal A(\bm u,\xi,p,T),(\bm v,\phi,q,S))\\
\ge& \zeta_1\Big ( \Vert\bm\varepsilon(\bm u)\Vert_1^2 
            +\frac1\lambda\Vert\xi\Vert_0^2+\Vert\xi_0\Vert_0^2
            +C_p \Vert p\Vert_0^2
            +C_T \Vert T\Vert_0^2  
       + t_K\Vert\nabla p\Vert_0^2+t_\theta\Vert\nabla T\Vert_0^2   \Big  )\\
 =& \zeta_1  \|(\bm u ,\xi ,p, T)\|_{\mathcal{B}_2}^2  \\ 
 \ge& \frac{\zeta_1}{\zeta_2}  \|(\bm u ,\xi ,p, T)\|_{\mathcal{B}_2}
                           \|(\bm v,\phi ,q, S)\|_{\mathcal{B}_2}  \\      
 \ge&\zeta  \|(\bm u ,\xi ,p, T)\|_{\mathcal{B}_2}
                           \|(\bm v,\phi ,q, S)\|_{\mathcal{B}_2} ,\\                               
\end{aligned}
\end{equation}
is satisfied with \(\zeta = \frac{\zeta_1}{\zeta_2}\), which consequently ensures that \eqref{A>uxipt vphiqs} is also valid.
\end{proof}

\section{Discretization and construction of preconditioners}\label{sec:dis}
In this section, we present finite element discretizations for the four-field formulation discussed earlier
and demonstrate that it is possible to identify parameter-robust preconditioners for the discretized problems. 
For the discretized spaces \(\bm{V}_h \times Q_h\), the classical Stokes inf-sup stability condition must be satisfied, i.e.,
\begin{equation}\label{infsup for dis}
\begin{aligned} 
 \inf_{\xi\in Q_h\cap L_0^2}\sup_{\bm v\in \bm{V}_h}\frac{(\nabla\cdot\bm v,\xi)}{\|\bm v\|_1\|\xi\|_0}\ge \gamma>0
\end{aligned}
\end{equation}
where $\gamma$ is independent of mesh size $h$, which means $\bm V_h\times Q_h$ is a stable Stokes pair.
For a given mesh size \(h > 0\), let \(\mathcal{T}_h\) denote a tesselation of \(\Omega\) into triangular elements. We assume that the triangulation is shape-regular and quasi-uniform. The following finite element spaces are then chosen: 
\begin{equation}\label{discreteFESpace}
\begin{aligned} 
 &\bm{V}_h=\{  \bm{v}_h \in [H_0^1(\Omega)]^d \cap {\bm{C} }^0(\bar{\Omega});  \bm{v}_h|_E\in \mathbb P_2(E),\forall E\in\mathcal T_h    \} , \\
 &    Q_h=\{     \phi_h \in L^2(\Omega) \cap C^0(\bar{\Omega}); \phi_h|_E\in \mathbb P_1(E),\forall E\in\mathcal T_h    \} , \\
 &    W_h=\{      q_h\in  H_0^1(\Omega) \cap C^0(\bar{\Omega}); q_h|_E\in \mathbb P_2(E),\forall E\in\mathcal T_h    \}. \\
\end{aligned}
\end{equation}
Other stable Stokes pairs, such as the MINI element, can also be used, and higher-order elements \cite{oyarzua2016locking} may be chosen for \(W_h\). Given sufficient regularities of the solution, we will have the corresponding approximation properties \cite{oyarzua2016locking} of the subspaces specified above. 

Then the discrete counterpart of  \eqref{TP_Model_static} reads as:
find $(\bm u_h, \xi_h , p_h, T_h)\in\bm{V}_h\times Q_h\times W_h\times W_h $ 
such that for any $(\bm{v}_h,\phi_h,q_h,S_h)\in \bm{V}_h\times Q_h\times W_h\times W_h $,
\begin{equation}\label{TP_var_dis}
\begin{aligned} 
(\bm\varepsilon(\bm u_h),\bm\varepsilon(\bm{v}_h)) - (\nabla\cdot\bm{v}_h,\xi_h)&=(\bm f,\bm{v}_h)       ,   \\
  -(\nabla\cdot\bm u_h,\phi_h)-({\lambda^{-1}}\xi_h,\phi_h)+(\lambda^{-1}p_h,\phi_h)+(\frac{\beta}{\lambda}T_h,\phi_h)&=0 ,\\
 (\lambda^{-1}\xi_h ,q_h)-(c_\alpha p_h ,q_h)- (t_K\nabla p_h,\nabla q_h)- (c_{\alpha\beta}T_h ,q_h)
                &=(-\tilde g,q_h),\\
 (\frac{\beta}{\lambda}\xi_h ,S_h)-(c_{\alpha\beta}p_h ,S_h)-(c_\beta T_h ,S_h)   
  -  (t_\theta\nabla T_h,\nabla S_h)&=(-\tilde {H},S_h).\\
\end{aligned}
\end{equation}

Next, we present two stability theorems for the discrete problem.
\begin{theorem}\label{3blocks norm dis}
Suppose that $\bm{V}_h\subset \bm V, Q_h\subset Q, W_h\subset W$ are finite element spaces. Assume that the pair
$\bm V_h\times Q_h$ satisfies the inf-sup condition \eqref{infsup for dis}. 
Let $X_h=\bm V_h\times Q_h\times W_h\times W_h$ be the Hilbert space with norm given in \eqref{3blocks norm} and $\mathcal A_h: X_h \to X_h^*$ the corresponding discrete operator given by \eqref{TP_Model_var}. Then there is a constant $\zeta>0$ independent
of $\alpha, \beta, \lambda, c_\alpha, c_{\alpha\beta}, c_\beta, t_K$, and $t_\theta$ satisfying (A1)-(A3), as well as the mesh size $h$ 
 such that 
 \begin{equation} 
 \inf_{(\bm u ,\xi  ,p , T )\in X_h}\sup_{(\bm v,\phi ,q, S)\in X_h}
              \frac{(\mathcal A_h(\bm u,\xi ,p, T),(\bm v,\phi ,q, S))}
         {\Vert(\bm u,\xi ,p, T)\Vert_{\mathcal{B}_1}\Vert(\bm v,\phi ,q, S)\Vert_{\mathcal{B}_1}}
         \ge \zeta >0
\end{equation}
 holds.
\end{theorem}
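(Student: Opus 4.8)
The plan is to mimic the continuous inf-sup argument of \Cref{infsup continuous 3blocks}, but with the key modification that the Stokes-stability function $\bm v_0$ is now taken from the \emph{discrete} space $\bm V_h$. Concretely, given $(\bm u,\xi,p,T)\in X_h$ with the decomposition $\xi=P_m\xi+\xi_0$, the discrete inf-sup hypothesis \eqref{infsup for dis} guarantees the existence of $\bm v_0\in\bm V_h$ with $(\nabla\cdot\bm v_0,\xi_0)=\|\xi_0\|_0^2$ and $\|\bm v_0\|_1^2\le\gamma^{-2}\|\xi_0\|_0^2$; combining this with Korn's inequality (\Cref{Korn's inequa}) yields $(\bm\varepsilon(\bm v_0),\bm\varepsilon(\bm v_0))\le\eta_0\|\xi_0\|_0^2$ for a constant $\eta_0=\eta_0(\gamma,C_K)$ that does not depend on $h$ or on any physical parameter. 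Note also that since $\xi\in Q_h$ and $\bm v_0\in\bm V_h\subset\bm V$, the identity $(\nabla\cdot\bm v_0,\xi)=(\nabla\cdot\bm v_0,\xi_0)+P_m\xi\int_{\partial\Omega}\bm v_0\cdot\bm n=\|\xi_0\|_0^2$ still holds exactly (the boundary term vanishes because $\bm v_0$ has zero trace), so \eqref{Stokes_infsup} is reproduced verbatim in the discrete setting.

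With this discrete $\bm v_0$ in hand, I would choose the test function exactly as in the continuous proof,
\[
(\bm v,\phi,q,S)=\tfrac{1}{2\eta_0}(-\bm v_0,0,0,0)+(\bm u,-\xi,-p,-T),
\]
which lies in $X_h$ because each component is built from $\bm u\in\bm V_h$, $\xi\in Q_h$, $p,T\in W_h$, and $\bm v_0\in\bm V_h$. All the algebraic manipulations in \eqref{1:(-bm v_0,0,0,0)}, \eqref{1:(bm u,-xi,- p,- T)}, and the concluding chain of inequalities in the proof of \Cref{infsup continuous 3blocks} are purely pointwise/bilinear-form identities and Cauchy--Schwarz/Young estimates; they never used that the spaces were infinite-dimensional, only that $\bm u,\bm v_0$ have vanishing trace (true in $\bm V_h$) and that the mean-zero splitting of $\xi$ is available (true in $Q_h\subset L^2(\Omega)$). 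Hence the same computation produces
\[
(\mathcal A_h(\bm u,\xi,p,T),(\bm v,\phi,q,S))\ge\min\!\Big\{\tfrac14,\tfrac{1}{4\eta_0}\Big\}\,\|(\bm u,\xi,p,T)\|_{\mathcal B_1}^2
\]
together with the upper bound $\|(\bm v,\phi,q,S)\|_{\mathcal B_1}\le\sqrt{\tfrac{1}{4\eta_0^2}+1}\,\|(\bm u,\xi,p,T)\|_{\mathcal B_1}$, and therefore the claimed inf-sup constant $\zeta=\min\{\tfrac14,\tfrac{1}{4\eta_0}\}/\sqrt{\tfrac{1}{4\eta_0^2}+1}$, which depends only on $\Omega$ (through $\gamma$ and $C_K$) and not on $h$ or the model parameters. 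One should also remark that boundedness of $\mathcal A_h$ in the $\mathcal B_1$-norm, uniformly in $h$, is inherited from \Cref{le:A bound under B1} simply by restricting to the subspaces, so together with the inf-sup bound this gives the desired well-posedness of $\mathcal B_1^{-1}\mathcal A_h$.

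The only genuine subtlety — and the step I would flag as the main obstacle — is ensuring that the constant $\eta_0$ in the discrete Stokes-stability estimate \eqref{Stokes_infsup} is truly $h$-independent. In the continuous proof this came from Theorem 5.1 of \cite{girault2012finite}; in the discrete setting it must come from the assumed uniform discrete inf-sup condition \eqref{infsup for dis} for the pair $\bm V_h\times Q_h$. This is exactly why the hypothesis that $\bm V_h\times Q_h$ be a \emph{stable} Stokes pair (with $\gamma$ independent of $h$) is imposed in the statement, and why the Taylor--Hood-type choice in \eqref{discreteFESpace} is appropriate. Once that uniform surjectivity of $\nabla\cdot:\bm V_h\to Q_h\cap L_0^2$ is invoked, no further regularity or quasi-uniformity of the mesh is needed for this particular theorem, and the proof is complete.
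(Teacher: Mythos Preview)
Your proposal is correct and follows essentially the same route as the paper's own (brief) proof: obtain the discrete Stokes-stability function $\bm v_0\in\bm V_h$ from the assumed uniform inf-sup condition \eqref{infsup for dis}, verify that the resulting test function lies in $X_h$, and then rerun the algebra of \Cref{infsup continuous 3blocks} verbatim with the same choice $\eta_1=1/(2\eta_0)$. One harmless slip worth noting: the passage from $\|\bm v_0\|_1^2\le\gamma^{-2}\|\xi_0\|_0^2$ to $\|\bm\varepsilon(\bm v_0)\|_0^2\le\eta_0\|\xi_0\|_0^2$ uses the trivial bound $\|\bm\varepsilon(\bm v_0)\|_0\le\|\bm v_0\|_1$ rather than Korn's inequality (which goes the other way), so $\eta_0=\gamma^{-2}$ already suffices and $C_K$ need not enter here.
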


\begin{theorem}\label{diagonal norm dis}
Suppose that $\bm{V}_h\subset \bm V, Q_h\subset Q, W_h\subset W$ are finite element spaces. Assume that the pair
$\bm V_h\times Q_h$ satisfies the inf-sup condition \eqref{infsup for dis}. 
 Let $X_h=\bm{V}_h\times Q_h\times W_h\times W_h$ be the Hilbert space with norm given in \eqref{norm B2} and $\mathcal A_h: X_h \to X_h^*$ the corresponding discrete operator given by \eqref{TP_Model_var}. Then there is a constant  $\zeta>0$
   independent
of $\alpha, \beta, \lambda, c_\alpha, c_{\alpha\beta}, c_\beta, t_K$, and $t_\theta$ satisfying (A1)-(A3) and \eqref{assumptions:Cp CT }, as well as the mesh size $h$ 
 such that 
 \begin{equation} 
 \inf_{(\bm u ,\xi  ,p , T )\in X_h}\sup_{(\bm v,\phi ,q, S)\in X_h}
              \frac{(\mathcal A_h(\bm u,\xi ,p, T),(\bm v,\phi ,q, S))}
         {\Vert(\bm u,\xi ,p, T)\Vert_{\mathcal{B}_2}\Vert(\bm v,\phi ,q, S)\Vert_{\mathcal{B}_2}}
         \ge \zeta >0
\end{equation}
holds.  
\end{theorem}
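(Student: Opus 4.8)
The plan is to transfer the argument of \Cref{infsup continuous diagonal} to the finite-dimensional setting essentially verbatim, the sole modification being that the continuous Stokes inf-sup property \eqref{Stokes_infsup} is replaced by its discrete version, which is exactly what the standing hypothesis \eqref{infsup for dis} supplies. Since $X_h\subset X$ and the form of $\mathcal A_h$ in \eqref{TP_var_dis} is merely the restriction to $X_h\times X_h$ of the form of $\mathcal A$, I note first that the continuity bound of \Cref{le:A bound under B2} is automatically inherited on $X_h$; so the work is entirely in the inf-sup direction.

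I would start by fixing $(\bm u,\xi,p,T)\in X_h\setminus\{\bm 0\}$ and decomposing $\xi=P_m\xi+\xi_0$ as in \eqref{def of phi_0}. Because $Q_h$ contains the constants (as holds for any Stokes-stable pair, in particular for the spaces in \eqref{discreteFESpace}), the mean-zero part $\xi_0$ still lies in $Q_h\cap L^2_0(\Omega)$, so \eqref{infsup for dis} applies to it: after rescaling there is $\bm v_0\in\bm V_h$ with $(\nabla\cdot\bm v_0,\xi_0)=\|\xi_0\|_0^2$ and $\|\bm v_0\|_1\le\gamma^{-1}\|\xi_0\|_0$. Using $\int_\Omega\nabla\cdot\bm v_0=0$ (from $\bm v_0|_{\partial\Omega}=\bm 0$) this becomes $(\nabla\cdot\bm v_0,\xi)=\|\xi_0\|_0^2$, and since $\|\bm\varepsilon(\bm v_0)\|_0\le\|\bm v_0\|_1$ one recovers the discrete analogue of \eqref{Stokes_infsup} with a constant $\eta_0=\gamma^{-2}$ that depends only on the chosen pair $\bm V_h\times Q_h$, hence is independent of $h$ by the $h$-uniformity built into \eqref{infsup for dis} and independent of every model parameter.

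With $\bm v_0$ in hand, I expect the rest to be a line-by-line copy of the continuous proof: the three building-block estimates — inequality \eqref{(v_0,0,0,0)} for the direction $(-\bm v_0,0,0,0)$, inequality \eqref{(u,-xi,0,0)} for $(\bm u,-\xi,0,0)$, and the completed-square estimate \eqref{Cmax1} for $(\bm u,-\xi,-p,-T)$, the last of which uses \eqref{assumptions:Cp CT } via $C_p-\tfrac12\alpha^2/\lambda\ge\tfrac12 C_p$ and $C_T-\tfrac12\beta^2/\lambda\ge\tfrac12 C_T$ — are purely algebraic consequences of Young's inequality and the definitions of $c_\alpha,c_{\alpha\beta},c_\beta$, and therefore carry over unchanged; conformity $X_h\subset X$ makes the combined test direction $(\bm v,\phi,q,S)=\eta_2(-\bm v_0,0,0,0)+\tfrac12(\bm u,-\xi,0,0)+(\bm u,-\xi,-p,-T)$ admissible in $X_h$. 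Choosing $\eta_1=\eta_2=\tfrac1{2\eta_0}$ as before yields the lower bound $(\mathcal A_h(\bm u,\xi,p,T),(\bm v,\phi,q,S))\ge\zeta_1\|(\bm u,\xi,p,T)\|_{\mathcal B_2}^2$ with $\zeta_1=\min\{\tfrac14,\tfrac1{4\eta_0}\}$, while the triangle inequality together with the bound on $\bm v_0$ gives $\|(\bm v,\phi,q,S)\|_{\mathcal B_2}\le\zeta_2\|(\bm u,\xi,p,T)\|_{\mathcal B_2}$ with $\zeta_2=\sqrt{\eta_0^{-2}/4+5/4}$; the asserted constant is then $\zeta=\zeta_1/\zeta_2$, manifestly independent of $\lambda,\alpha,\beta,c_\alpha,c_{\alpha\beta},c_\beta,t_K,t_\theta$ and of $h$.

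The only real obstacle is mild and entirely localized to the construction of $\bm v_0$: one must check that $\xi_0$ genuinely stays in $Q_h$ and that \eqref{infsup for dis} delivers $\bm v_0$ inside the \emph{discrete} velocity space with an $h$-independent constant. That is precisely why the Stokes-stability assumption is imposed; granting it, the parameter- and mesh-robustness of $\zeta$ is inherited directly, since nothing else in the continuous argument used the infinite-dimensionality of $X$. (The same template, this time mirroring \Cref{infsup continuous 3blocks} and dropping \eqref{assumptions:Cp CT }, proves the companion \Cref{3blocks norm dis}.)
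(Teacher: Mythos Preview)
Your proposal is correct and matches the paper's approach essentially line for line: the paper's own proof of \Cref{diagonal norm dis} (given jointly with \Cref{3blocks norm dis}) likewise constructs a discrete $\bm v_{0,h}\in\bm V_h$ from the Stokes inf-sup assumption \eqref{infsup for dis}, derives the analogue of \eqref{(v_0,0,0,0)}, and then declares that the remaining steps follow verbatim from the continuous case \Cref{infsup continuous diagonal}. Your write-up is in fact somewhat more explicit than the paper's---noting that $Q_h$ contains constants so that $\xi_0\in Q_h\cap L^2_0$, and tracking the constant $\eta_0=\gamma^{-2}$---but the strategy is identical.
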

The proofs of Theorem \ref{3blocks norm dis} and Theorem \ref{diagonal norm dis} are analogous to the proof of Theorem \ref{infsup continuous 3blocks} and Theorem \ref{infsup continuous diagonal}. We now only give a short proof.
 
\begin{proof}For any \((\bm u_h, \xi_h, p_h, T_h) \in X_h\), by the inf-sup condition in \eqref{infsup for dis}, Korn's inequality, and the Stokes problem theory, there exists a constant \(\eta_0\), depending only on the domain \(\Omega\), and \(\bm{v}_{0,h} \in \bm{V}_h\) such that

\[
(\text{div} \, \bm{v}_{0,h}, \xi_h) = \|\xi_{h,0}\|_0 \quad \text{and} \quad (\bm{v}_{0,h}, \bm{v}_{0,h}) \le \eta_0 \|\xi_{h,0}\|_0,
\]
where \(\xi_{h,0} = \xi_h - P_m \xi_h\) as defined in \eqref{def of phi_0}. This implies that for a sufficiently small constant \(\eta_1\), depending only on the domain \(\Omega\), we have

\[
\begin{aligned}
(\mathcal A_h (\bm u_h, \xi_h, p_h, T_h),& (-\bm{v}_{0,h}, 0, 0, 0)) \\
&= -(\bm \varepsilon(\bm u_h), \bm \varepsilon(\bm{v}_{0,h})) + (\nabla \cdot \bm{v}_{0,h}, \xi_h) \\
&\ge -\frac{1}{4\eta_1} (\bm \varepsilon(\bm u_h), \bm \varepsilon(\bm u_h)) - \eta_1 (\bm \varepsilon(\bm{v}_{0,h}), \bm \varepsilon(\bm{v}_{0,h})) + \|\xi_{0,h}\|_0^2 \\
&\ge -\frac{C_K}{4\eta_1} \|\bm u_h\|_1^2 + (1 - \eta_1 \eta_0) \|\xi_{h,0}\|_0^2.
\end{aligned}
\]
The remaining steps follow similarly to those in Theorems \ref{3blocks norm dis} and \ref{diagonal norm dis}. 
\end{proof}

\section{Numerical experiments}\label{sec:Numerical results}

In this section, we provide numerical experiments to demonstrate the computational accuracy and efficiency of the algorithms proposed in the previous sections. In all of the examples, we let $\Omega=[0,1]^2$ and assume Dirichlet boundary
conditions for displacement, pressure, and temperature on all of $\partial\Omega$.
We consider triangular partitions $\mathcal T_h$ and finite element spaces \eqref{discreteFESpace}.
The examples were implemented using the finite element library Firedrake \cite{firedrake}.

First, we verify the convergence properties of the discretization.

\begin{example}[Error convergence]\label{ex:example1}
  We consider the time-dependent problem \eqref{TP_Model} with the material
  parameters set as $\mu=0.5$, $\lambda=3$, $\alpha=3$, $\bm K=K\bm I$, $K=1$, $c_0=0.3$,
  $\beta=2$, $\bm\Theta = \theta\bm I$, $\theta=2$, $a_0=4$, $b_0=0.1$. Then, the body force $\bm f$,
  the mass source $g$, and the heat source $h$ are chosen such that the exact
  solution is given by
\begin{equation}\label{Example1}  
\begin{aligned}   
      & \bm u(x,t)= \big(   e^{-t}\sin(\pi x)\sin(\pi y) ,e^{-t}\sin(\pi x)\sin(\pi y)   \big)^T,              \\
      & p(x,t) =  \lambda e^{-t}\sin(\pi x) \cos(\pi y)    ,      \\
      & T(x,t) =  \mu e^{-t}\cos(\pi x) \sin(\pi y)   .
\end{aligned}
\end{equation} 
Setting $\Delta t=10^{-2}$ in the backward Euler scheme \eqref{TP_Model_static},
we simulate the system's evolution until $t_f=10^{-1}$. Using their respective natural
norms the errors at the final time of
$\bm u_h$, $\xi_h$, $p_h$, $T_h$ are shown in \Cref{tab:example1}. Here, optimal rates
for all the variables can be observed.

\begin{table}
  \centering
  \footnotesize{
    \begin{tabular}{c|cccc}
      \hline
      $h$ & $\lVert \bm{u}-\bm{u}_h\rVert_{1}$ & $\lVert \xi-\xi_h\rVert_{0}$ & $\lVert p-p_h\rVert_{1}$ & $\lVert T-T_h\rVert_{1}$\\
      \hline
0.707107   &4.74963(--)     &2.54616(--)    &0.910795(--)     &10.3773(--)\\        
0.353553   &2.67629(0.83)   &1.26063(1.01)  &0.739976(0.30)   &5.40733(0.94)\\
0.176777   &0.59638(2.17)   &0.31858(1.98)  &0.190834(1.95)   &1.69665(1.67)\\ 
0.0883883  &0.07901(2.92)   &0.06618(2.27)  &0.048254(1.98)   &0.45525(1.89)\\  
0.0441942  &0.01111(2.83)   &0.01561(2.08)  &0.012094(2.00)   &0.11606(1.97)\\ 
0.0220971  &0.00183(2.60)   &0.00384(2.02)  &0.003025(2.00)   &0.02916(2.00)\\  
0.0110485  &0.00037(2.31)   &0.00095(2.01)  &0.000756(2.00)   &0.00730(2.00)\\  
0.00552426 &0.00008(2.12)   &0.00023(2.00)  &0.000189(2.00)   &0.00182(2.00)\\
\hline
    \end{tabular}
  }
  \caption{
    Approximation errors at $t=t_f$ of the backward Euler scheme \eqref{TP_Model_static}
    for thermo-poroelastic model setup in \Cref{ex:example1}. Discretization
    by \eqref{discreteFESpace} finite element spaces. The estimated rate of convergence is shown in the
    brackets.
  }
  \label{tab:example1}
\end{table}

\end{example}

Next, we demonstrate parameter robustness of the two proposed preconditioners. With
both $\mathcal{B}^{-1}_1$ in \eqref{eq:precond3x3} and $\mathcal{B}^{-1}_2$ in  \eqref{eq:preconddiag} we discuss the performance of the exact
preconditioners (with the elliptic operators inverted by LU decomposition) and inexact preconditioners
where the preconditioner blocks are approximated by algebraic multigrid (AMG).
Specifically, we run a single V-cycle of BoomerAMG solver from Hypre\cite{hypre} using the
library's default settings. The treatment of the dense block due to $L^2_0$-projection is described further in \Cref{rmrk:dense}. With
\eqref{eq:precond3x3} we apply AMG to the $3\times 3$ block inducing the inner
product on $Q_h\times W_h\times W_h$. This operator's challenges for AMG
are briefly discussed in \Cref{rmrk:amg}.

\begin{example}[Preconditioning]\label{ex:precon}
We investigate the performance of the preconditioners in terms of the stability
of preconditioned MinRes iterations under parameter variations and mesh refinement.
In the following, the iterations are always started from a 0 initial guess and terminate once the relative preconditioned residual norm is below $10^{-12}$. Setting $\Delta t = 10^{-2}$, the MinRes solver is applied to the linear system due to \eqref{TP_Model_static} and discretization \eqref{discreteFESpace}.

Using setup of \Cref{ex:example1} we first consider robustness under variations of $\beta$, $\lambda$, $\theta$ and $K$ while fixing $\mu=0.5$, $c_0=a_0=2$, $b_0=\alpha=1$.
As illustrated in \Cref{fig:robust}, both preconditioners, in their exact and inexact variants, result in bounded iteration counts. The tabular representation of data from \Cref{fig:robust} is provided in Supplementary results in \Cref{sec:extras}. From both \Cref{fig:robust} and \Cref{tab:robust_amg,tab:robust_lu}, it can be seen that both preconditioners are robust with respect to physical parameter variations and mesh refinements. Generally, the preconditioner based on \(\mathcal{B}_1\) achieves faster convergence compared to the block diagonal preconditioner based on \(\mathcal{B}_2\). We also observe that the inexact versions require up to twice as many iterations as their exact counterparts.
  
  \begin{figure}
    \centering
\includegraphics[width=0.495\textwidth]{./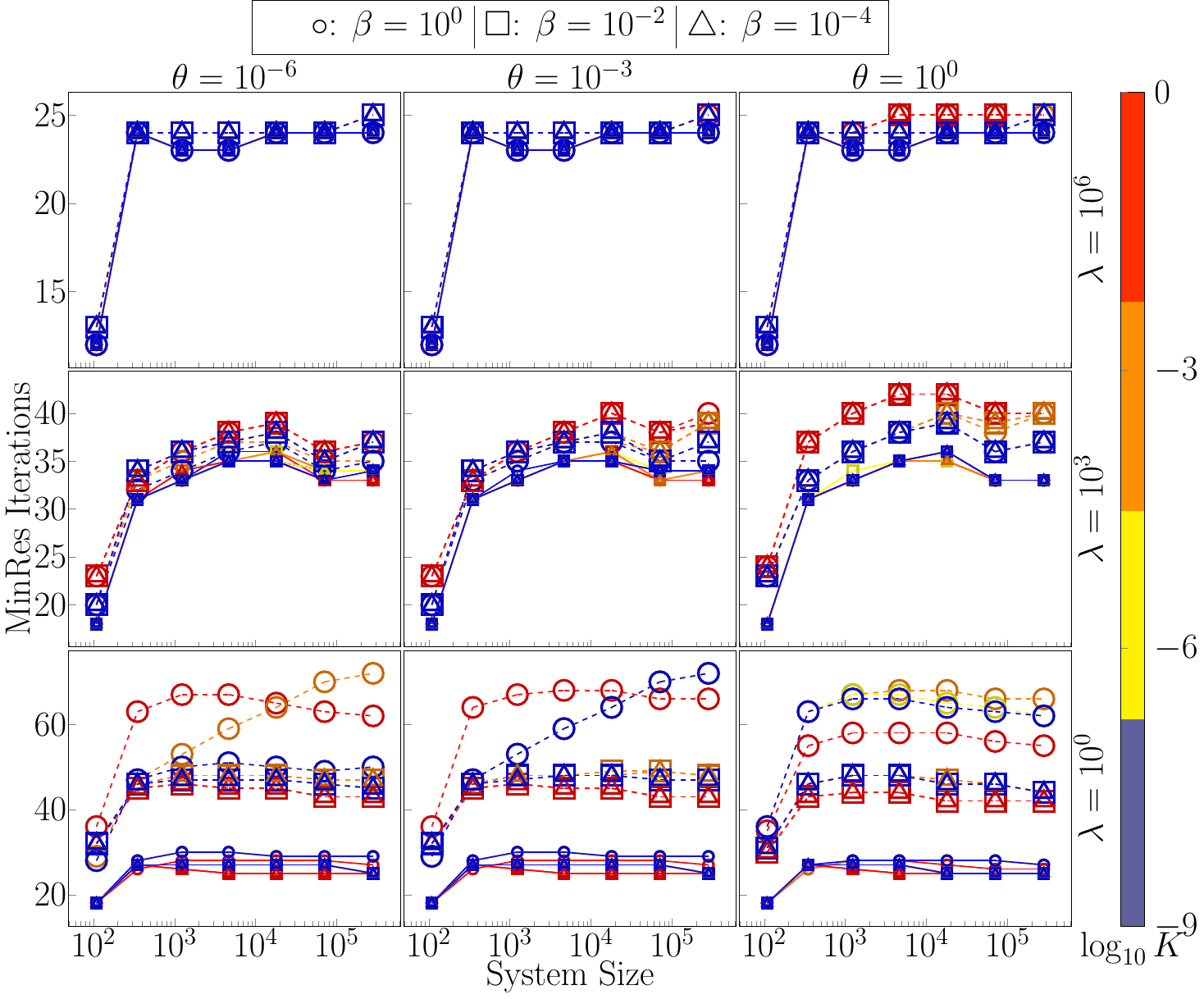
    }
    \includegraphics[width=0.495\textwidth]{./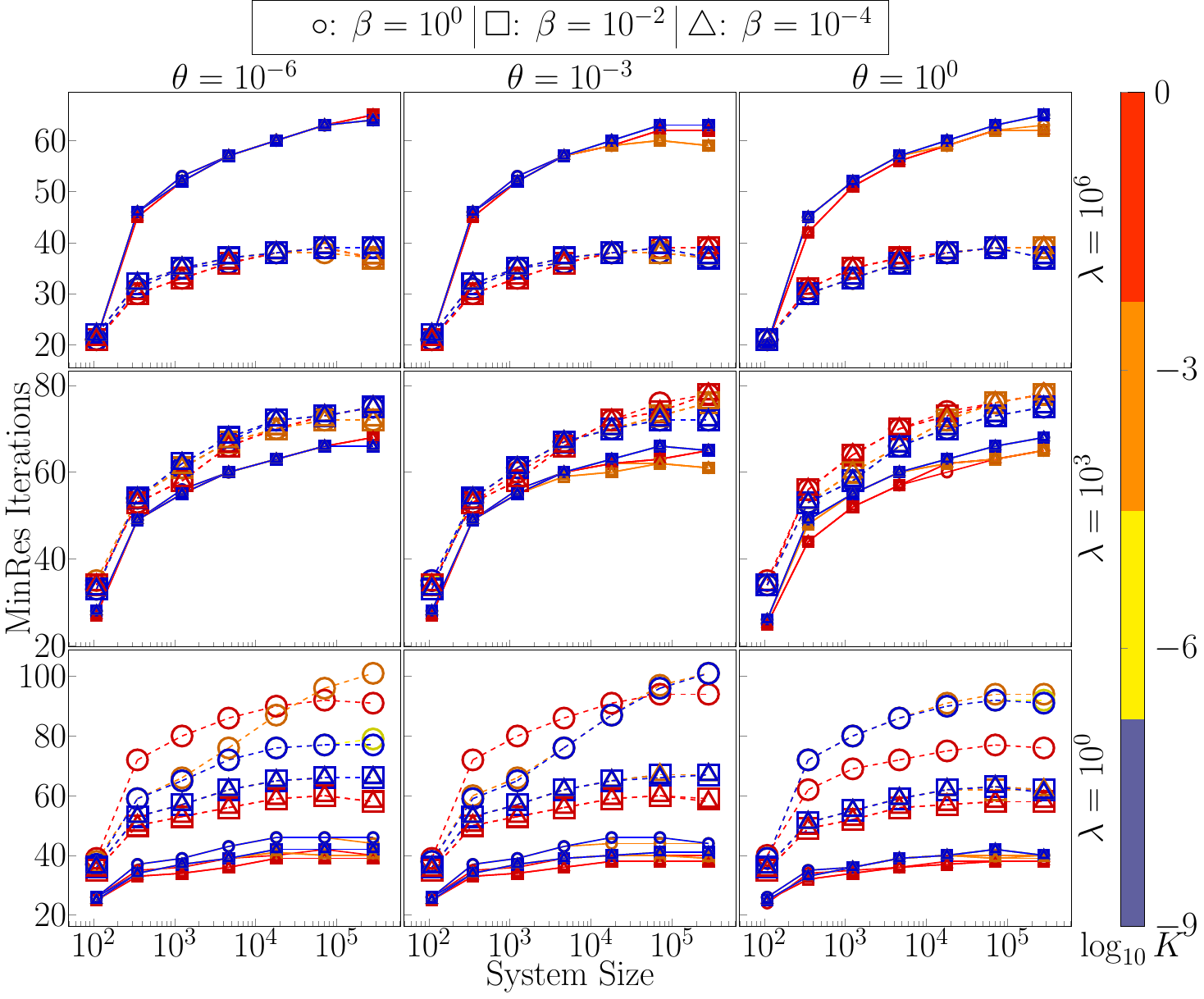
    }
    \vspace{-15pt}
    \caption{
      Performance of preconditioner $\mathcal{B}^{-1}_1$ in \eqref{eq:precond3x3} (dashed lines) and block diagonal preconditioner $\mathcal{B}^{-1}_2$ in  \eqref{eq:preconddiag} (solid lines)
      for linear system due to \eqref{TP_Model_static} under varying $K$ (encoded by line color), $\beta$ (encoded
      by marks), $\theta$ (varies in subplots column-wise) and $\lambda$ (varies in subplots row-wise).
      Two realizations of the preconditioners are compared: (left) exact preconditioners, (right)
      approximation in terms of AMG.
    }
    \label{fig:robust}
  \end{figure}

  Finally, we consider sensitivity to variations in $a_0$, $c_0$, holding the remaining parameters in \eqref{TP_Model} fixed at unit value. Our results
  are summarized in \Cref{tab:example2}, where we report the iterations corresponding
  to mesh refinement levels $l$ with the mesh size $h=h_0/2^l$. We observe
  that the iterations are stable under varying $h$, $a_0$, and $c_0$.

  \begin{table}
    \centering
    \tiny{
      \begin{tabular}{c|cccc||cccc}
        \hline
        & \multicolumn{4}{c||}{LU} & \multicolumn{4}{c}{AMG}\\
        \hline
      \backslashbox{$(a_0, c_0)$}{$l$} & 3 & 4 & 5 & 6 & 3 & 4 & 5 & 6\\ 
      \hline
  $(10^{1}, 10^{1})$ & (25, 36) & (25, 36) & (25, 35) & (24, 35)  & (35, 47) & (36, 48) & (37, 49) & (37, 49)\\
  $(10^{1}, 10^{3})$ & (24, 32) & (24, 32) & (24, 32) & (23, 32)  & (35, 43) & (36, 45) & (36, 46) & (36, 46)\\
  $(10^{1}, 10^{9})$ & (12, 16) & (12, 16) & (12, 16) & (12, 16)  & (17, 20) & (18, 22) & (18, 22) & (18, 22)\\
  $(10^{3}, 10^{1})$ & (24, 32) & (24, 32) & (24, 32) & (23, 32)  & (35, 43) & (36, 45) & (36, 46) & (36, 46)\\
  $(10^{3}, 10^{3})$ & (23, 26) & (23, 26) & (23, 26) & (22, 26)  & (33, 36) & (35, 37) & (35, 38) & (35, 38)\\
  $(10^{3}, 10^{9})$ & (12, 13) & (12, 13) & (12, 13) & (12, 13)  & (17, 17) & (18, 19) & (18, 20) & (18, 20)\\
  $(10^{9}, 10^{1})$ & (12, 16) & (12, 16) & (12, 16) & (12, 16)  & (17, 20) & (18, 22) & (18, 22) & (18, 22)\\
  $(10^{9}, 10^{3})$ & (12, 13) & (12, 13) & (12, 13) & (12, 13)  & (17, 17) & (18, 19) & (18, 20) & (18, 20)\\
  $(10^{9}, 10^{9})$ & (10, 10) & (10, 10) & (10, 10) & (10, 11)  & (16, 17) & (16, 18) & (18, 18) & (18, 18)\\
      \hline
      \end{tabular}
    }
\caption{
      Performance of preconditioners \eqref{eq:precond3x3} and \eqref{eq:preconddiag} 
      for linear system due to \eqref{TP_Model_static} under varying $a_0$, $c_0$.
      The number of MinRes iterations is shown in the brackets, where the first item is obtained
      using the preconditioner \eqref{eq:precond3x3}. Realizations of the preconditioners in terms
      of LU and AMG are compared. 
    }
    \label{tab:example2}
  \end{table}

Notably, the limiting case where all coefficients $a_0$, $b_0$, $c_0$ are all equal to zero holds particular practical significance in fields such as geophysics. Our proposed preconditioners demonstrate robust performance in this challenging scenario, specifically the first preconditioner \eqref{eq:precond3x3}. For the second preconditioner \eqref{eq:preconddiag}, the theoretical guarantees cannot be formally extended to the case $a_0=b_0=c_0$ due to the existence of assumptions \eqref{assumptions:Cp CT }.
However, the numerical implementation remains functionally effective, particularly as $\lambda\to+\infty$. 
In this limit, the range of $C_p, C_T$ extends to include non-negative values, implying that $a_0$, $b_0$, and $c_0$ may take zero values. 
In Table \ref{tab:a0=b0=c0=0}, we present the results in the situation $a_0=b_0=c_0=0$ with varying $\lambda$, still holding the remaining parameters fixed at unit value.
  \begin{table} 
    \centering
    \tiny{
      \begin{tabular}{c|cccc||cccc}
        \hline
        & \multicolumn{4}{c||}{LU} & \multicolumn{4}{c}{AMG}\\
        \hline
      \backslashbox{$\lambda$}{$l$} & 3 & 4 & 5 & 6 & 3 & 4 & 5 & 6\\ 
      \hline
  $10^6$ & (23,24) & (24,24) & (24,26)& (24,26) & (51,58) & (56,64) & (59,68)& (62,70)\\
  $10^3$ & (34,38) & (36,40) & (36,40) & (34,38) & (52,59) & (57,65)& (60,70)& (63,73)\\
  $10^0$ & (26,74) & (26,74) & (26,74) & (26,71)&(33,88) & (35,94) & (36,98)&(36,102)\\
      \hline
      \end{tabular}
    }
\caption{
      Performance of preconditioners \eqref{eq:precond3x3} and \eqref{eq:preconddiag} 
      for linear system due to \eqref{TP_Model_static} under varying $\lambda$.
      The number of MinRes iterations is shown in the brackets, where the first item is obtained
      using the preconditioner \eqref{eq:precond3x3}. Realizations of the preconditioners in terms
      of LU and AMG are compared. 
    }                                     
    \label{tab:a0=b0=c0=0}
  \end{table}

\end{example}

In the final example we discuss the performance of the proposed preconditioners in the case of spatially-varying model parameters.

\begin{example}[Heterogeneous coefficients]\label{ex:heterog}
  We consider \eqref{general TP_Model} in two configurations with discontinuous material
  coefficients inspired by \cite{cai2015overlapping, cai2016hybrid}. The coefficients shall be defined as
  piecewise constant for the $4\times 4$ partition of the domain $\Omega=(0, 1)^2$,
  cf. \Cref{fig:heterog}. In both cases we set $\alpha=3$, $c_0=0.3$, $\beta=2$, $a_0=4$, $b_0=0.1$.
  In the first test case, referred to as "central-jump", we let the Poisson ratio, $\nu$, take the value $\nu=\nu_i=0.4, 0.49, 0.499$ in the middle $2\times 2$ (black) subdomain while $\nu=0.3$
  elsewhere (white subdomain). Furthermore we set $K=1$ and $\theta=1$.

  In the second "checkerboard" test case, the values of $\nu$, $K$, and $\theta$
  vary in the subdomains according to the following lookup tables
  {\footnotesize{  
      \[
      \begin{aligned}
\nu = \left[\begin{array}{cccc}
0.49999 & 0.37 & 0.499 & 0.41 \\
0.3 & 0.49999 & 0.33 & 0.4999 \\
0.49999 & 0.29 & 0.499 & 0.3 \\
0.2 & 0.4999 & 0.31 & 0.499
  \end{array}\right]&,\
K=
\left[\begin{array}{cccc}
100 & 0.1 & 1  & 0.001 \\
0.01 & 1 & 10000 & 0.1 \\
0.09 & 0.29 & 1000 & 0.3 \\
0.001 & 0.9 & 0.1 & 0.01
\end{array}\right],\\
&\theta = 
\left[\begin{array}{cccc}
10 & 0.01 & 0.1  & 10 \\
0.1 & 10000 & 10 & 0.1 \\
0.9 & 0.09  & 10 & 0.3 \\
0.1 & 0.09 & 10  & 200
  \end{array}\right].
\end{aligned}%
\]}}
We note that the black subdomains in \Cref{fig:heterog} are associated with almost incompressible materials,
while the white subdomains are associated with compressible materials. Setting $E=6000$ in each test case the Lam{\'e} coefficients
are computed as
\begin{equation}
\lambda=\frac{E \nu }{(1+ \nu)(1-2 \nu)}, \quad \mu=\frac{E}{2(1+\nu)}.
\end{equation}

With both configurations of material coefficients, we set up the boundary conditions
and forcing terms similar to \Cref{ex:precon}. That is, on the entire $\partial\Omega$
the displacement, pressure and temperature are prescribed based on \eqref{Example1} (with $t=0$).
Furthermore, we compute the forcing terms from \eqref{Example1} using unit values for the spatially
varying coefficients, i.e., $\mu$, $\lambda$, $\theta$, and $K$, while taking their actual
values for the remaining constant coefficients. Fixing $\Delta t=10^{-2}$, we then solve the
linear system due to \eqref{TP_Model_static} and discretization \eqref{discreteFESpace} by a preconditioned MinRes solver starting
from 0 initial guess and convergence tolerance $10^{-12}$. As can be seen in \Cref{tab:heterog}, the preconditioners can handle heterogeneous coefficients when computed exactly as well as
when approximated by AMG. Similar to the constant coefficient tests in \Cref{ex:precon}, fewer iterations
are required with \eqref{eq:precond3x3}. Solutions of the test problems are plotted in \Cref{fig:heterog}.

\begin{figure}
  \setlength{\tabcolsep}{0pt}
  \begin{tabular}{c|ccc}
  \includegraphics[height=2.9cm]{./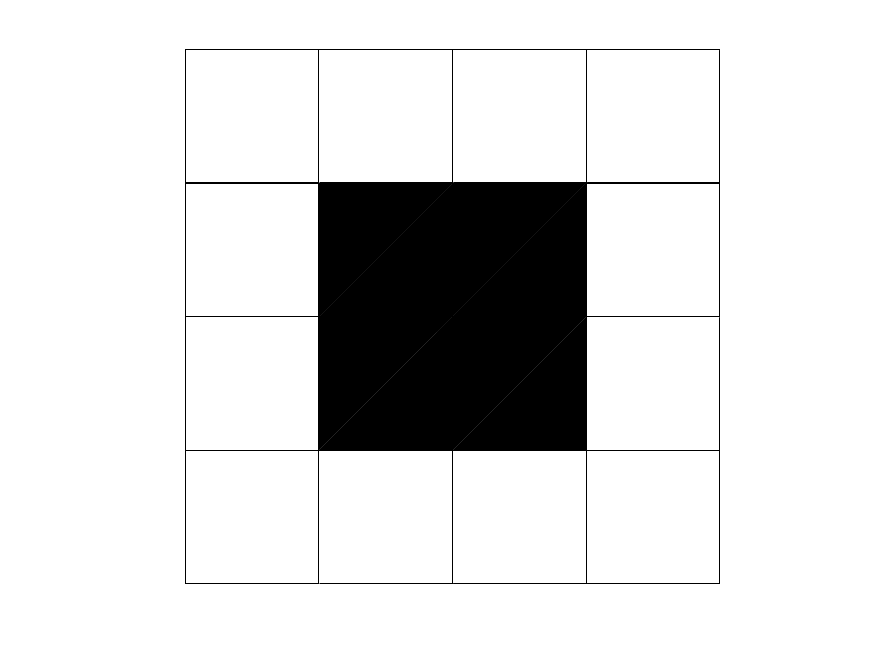} &
  \includegraphics[height=3.1cm]{./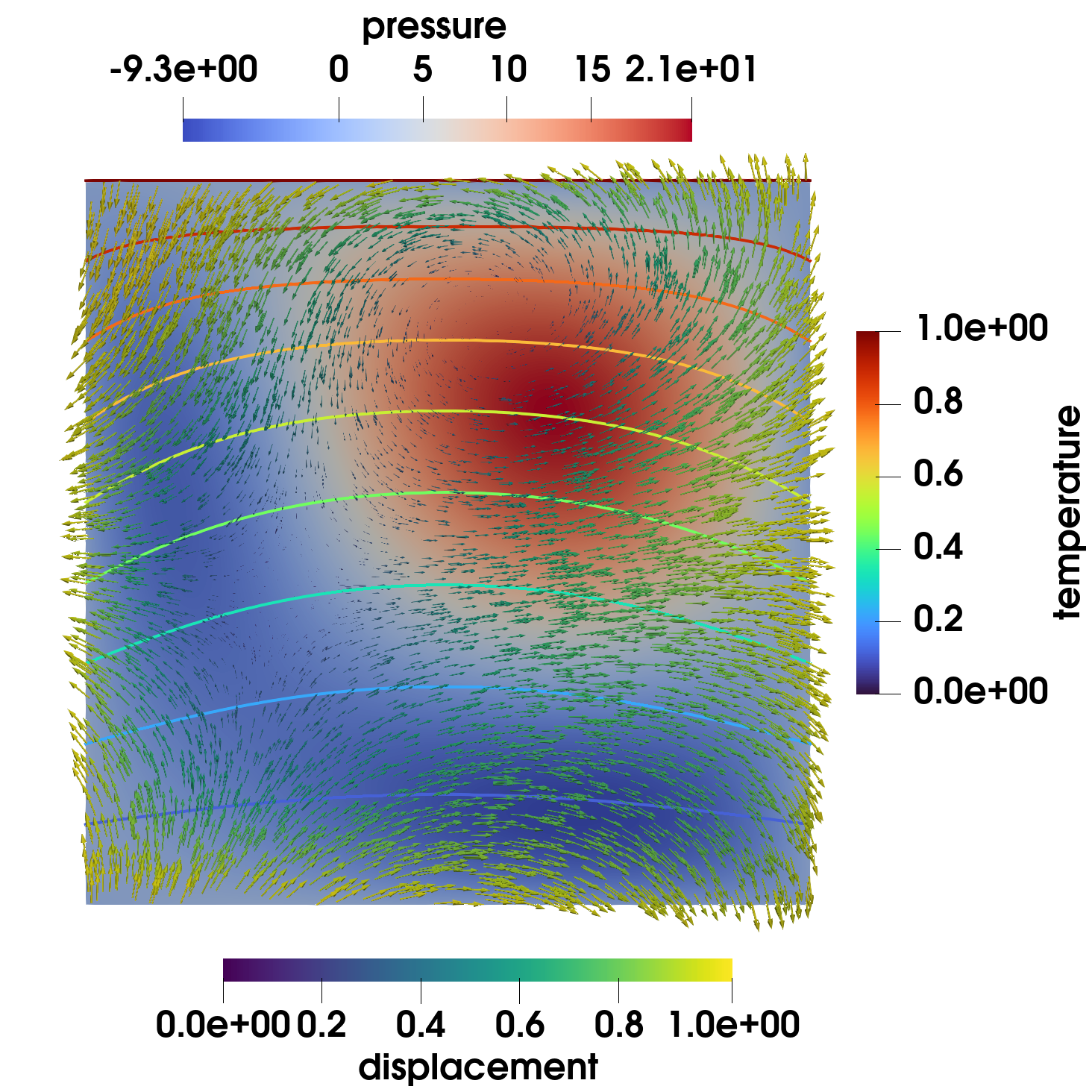} &
  \includegraphics[height=3.1cm]{./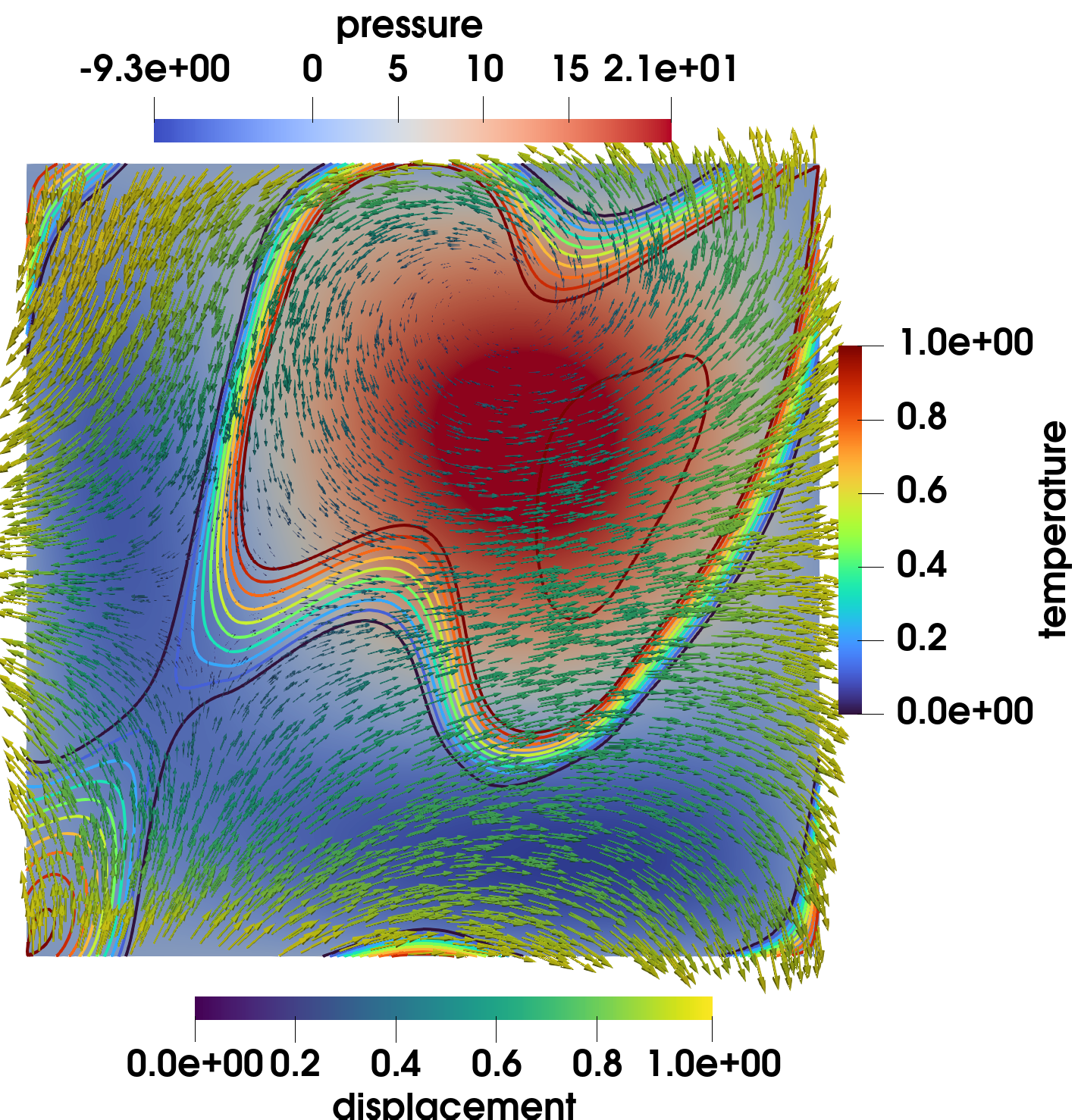} & 
  \includegraphics[height=3.1cm]{./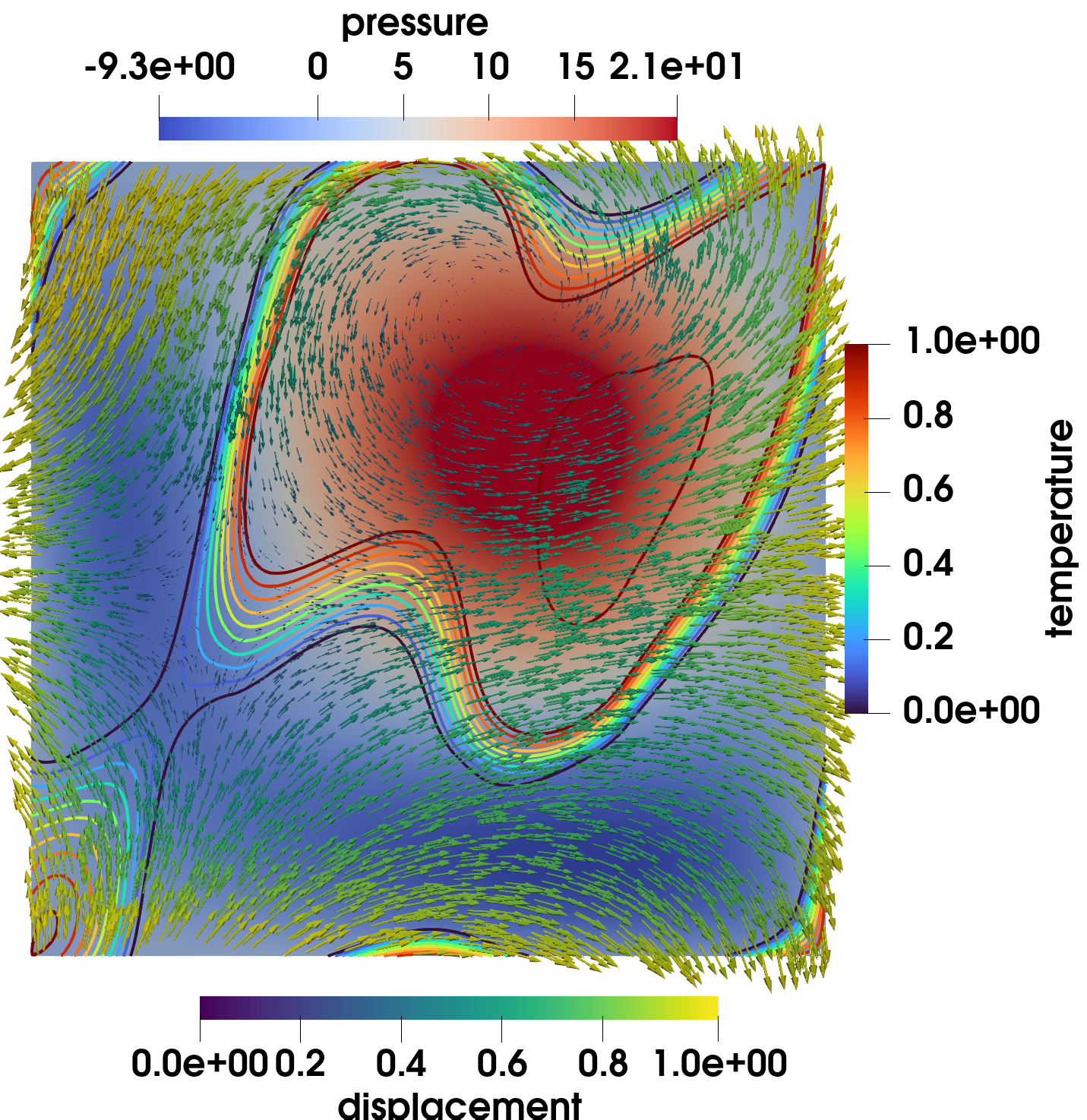}\\
  \includegraphics[height=2.9cm]{./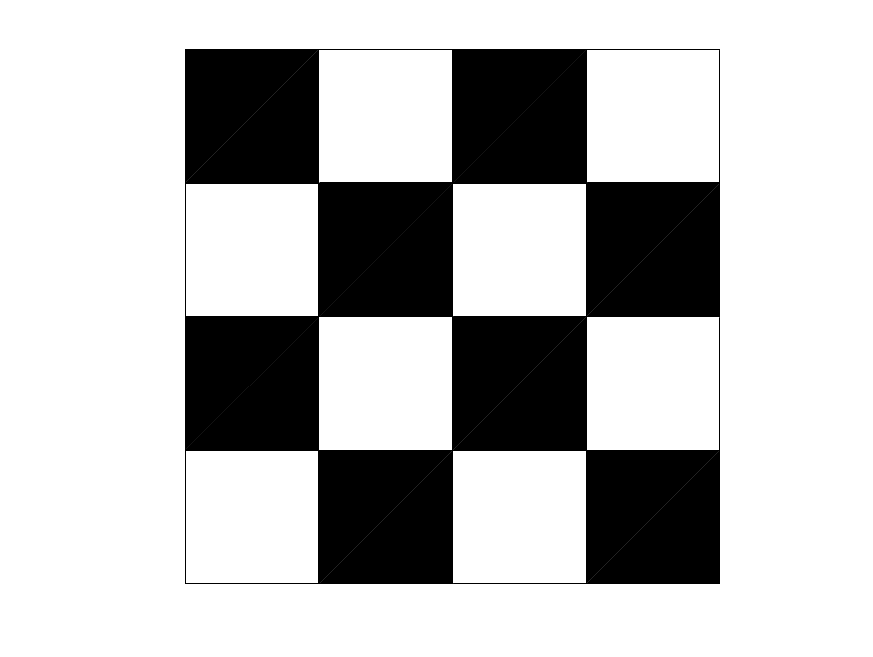} & 
  \includegraphics[height=3.1cm]{./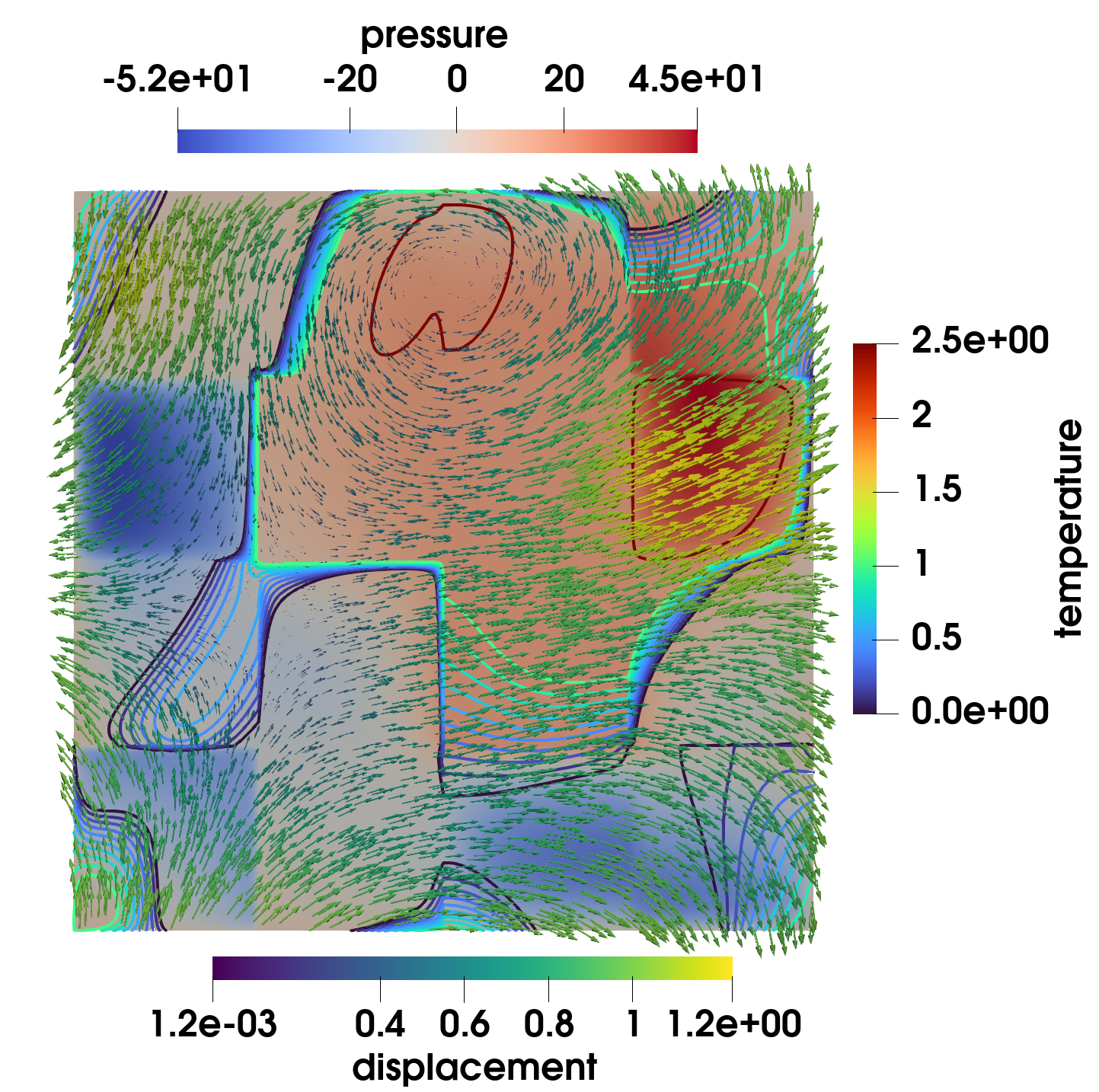} \\
  \end{tabular}
  \caption{
    Configurations of heterogeneous material properties. The coefficients
    of the central-jump and checkerboard test cases are 
    defined as piecewise constant with respect to the partitions shown in the first column.
    Solution of \eqref{TP_Model_static} with $\Delta t=10^{-2}$ and heterogeneous
    material coefficients in central-jump with $\nu_i=0.4, 0.49, 0.499$ (top row) and checkerboard (bottom row)  configurations.
    Displacement visualized by glyphs, pressure by heatmap and temperature by contour lines.
  }
  \label{fig:heterog}
\end{figure}

\begin{table}
  \centering
  \scriptsize{
    \setlength{\tabcolsep}{2pt}
    \begin{tabular}{c|c|c||lllllll}
      \hline
      case & preconditioner & \backslashbox{realization}{$l$} & 2 & 3 & 4 & 5 & 6 & 7 & 8\\
      \hline
      \multirow{4}{*}{\minitab[c]{central-jump\\$\nu_i$=0.4}} & \eqref{eq:precond3x3} & LU  & 23 & 23 &22 &21 &21 &21 &20\\
      & \eqref{eq:precond3x3} & AMG & 25 & 28 &29 &29 &29 &29 &29\\
      & \eqref{eq:preconddiag} & LU & 25 & 26 &24 &24 &24 &22 &22\\
      & \eqref{eq:preconddiag} & AMG& 29 & 30 &32 &32 &32 &32 &31\\
      \hline
      \multirow{4}{*}{\minitab[c]{central-jump\\$\nu_i$=0.49}} & \eqref{eq:precond3x3}  & LU  &  24        &   26        &   26         &    25         &    24         &     24         & 22           \\
   & \eqref{eq:precond3x3}  & AMG &  27        &   31        &   32         &    34         &    33         &     33         & 33           \\
   & \eqref{eq:preconddiag} & LU &  27        &   29        &   27         &    27         &    27         &     25         & 25           \\
       & \eqref{eq:preconddiag} & AMG &  30        &   34        &   35         &    36         &    36         &     36         & 36           \\
       \hline
\multirow{4}{*}{\minitab[c]{central-jump\\$\nu_i$=0.499}} & \eqref{eq:precond3x3}  & LU  &  24        &   26        &   26         &    26         &    24         &     24         & 24           \\
 & \eqref{eq:precond3x3}  & AMG &  27        &   31        &   32         &    34         &    35         &     35         & 35           \\
 & \eqref{eq:preconddiag} & LU  &  27        &   29        &   29         &    27         &    27         &     25         & 25           \\
 & \eqref{eq:preconddiag} & AMG &  30        &   34        &   36         &    38         &    37         &     37         & 37           \\
      \hline                                           
      \multirow{4}{*}{checkerboard} & \eqref{eq:precond3x3} & LU  & 32 & 42 &42 &42 &42 &42 &40\\
      & \eqref{eq:precond3x3} & AMG & 37 & 47 &50 &53 &52 &53 &52\\
      & \eqref{eq:preconddiag} & LU & 37 & 44 &45 &43 &43 &43 &43\\
      & \eqref{eq:preconddiag} & AMG& 39 & 48 &53 &54 &55 &54 &54\\
      \hline
    \end{tabular}
  }
  \caption{Number of MinRes iterations for solving test cases with heterogeneous coefficients (see \Cref{fig:heterog})
    on meshes with size $h=2^{-l}$. Discretization and solver setup as in \Cref{ex:precon}.
  }
  \label{tab:heterog}
\end{table}

\end{example}

\begin{remark}[Dense blocks due to $I_0$]\label{rmrk:dense}
    Due to the operator $I_0$, the matrix representation of both inner products $\mathcal{B}_1$ in \eqref{operator B1} and $\mathcal{B}_2$ in \eqref{operator B2} includes a dense block. In
    particular, let $\bm x\in\mathbb{R}^n$, such that $x_i=P_m\phi_i$ for all
    $\phi_i\in Q_h$ and $n=\text{dim } Q_h$ and let $\bm y = (\bm x, \bm 0, \bm 0)$
    be the representation of $(\phi, 0, 0)$ in $Q_h\times W_h\times W_h$.
    Then, the $Q$-block in the inner product \eqref{operator B2} leading to 
    block-diagonal preconditioner
    \eqref{eq:preconddiag}
    and the $Q\times W\times W$-block in \eqref{operator B1}
    yield operators
    \begin{equation}\label{eq:lr}
    \bm A_2 - \bm x \bm x^T,\quad \bm A_1 - \bm  y \bm y^T.
    \end{equation}
Here, the matrices $\bm A_i$ are symmetric and positive definite and the blocks thus have a structure of low-rank perturbed invertible operators.
    For example,
    with \eqref{operator B2} the $Q$ block, which is the discretization of the inner product of $\lambda^{-1/2}L^2\cap L_0^2$, is represented as $\lambda^{-1} \bm M + (\bm M - \bm x\bm x^T)$
    with $\bm M$ the mass matrix of $Q_h$, see also \cite{lee2017parameter}. In turn, $\bm A_2 = (1+\lambda^{-1})\bm M$. 

When computing the action
    of the preconditioners we take advantage of the structure \eqref{eq:lr} and invoke the
    Sherman–Morrison–Woodbury formula \cite[Ch 2.1]{golub2013matrix}
    \[
    (\bm A - \bm z \bm z^T)^{-1}
    =
    \bm A^{-1} + \frac{1}{1-\bm{z}^T (\bm A^{-1}\bm{z})}(\bm{A}^{-1}\bm{z})\bm{z}^T\bm{A}^{-1}.
    \]
    Finally, the action of $\bm A^{-1}$ in the above formula 
    is either computed with LU decomposition or approximated through AMG V-cycle. We note that, as $\bm A^{-1}\bm z$ can be precomputed once in the setup phase, the subsequent application of the preconditioner requires a single evaluation of $\bm A^{-1}$.
  \end{remark}

  \begin{remark}[AMG in \eqref{eq:precond3x3}]\label{rmrk:amg}
    The $3\times 3$ block of preconditioner \eqref{eq:precond3x3} can present a challenge for parameter-robust approximations in terms of multilevel methods.
    In particular, assuming for simplicity mixed boundary conditions on the
    displacement, and letting $\gamma=\alpha\beta-b_0\lambda$, the inner product operator on $Q\times W\times W$ becomes
    \begin{equation}\label{eq:amg3x3}
    \begin{bmatrix}
      I & 0 & 0 \\
      0 & -t_K\text{div}( \nabla) + c_0 I & 0\\
      0 & 0 & -t_\theta\text{div}(\nabla) + a_0 I 
    \end{bmatrix}
    + \frac{1}{\lambda}
    \begin{bmatrix}
      I          & -\alpha I                 & -\beta I \\
      -\alpha I  & \alpha^2 I     & \gamma I\\
      -\beta  I  & \gamma I  & \beta^2I\\
    \end{bmatrix}
    \end{equation}
    and in certain parameter regimes the second term, which represents the
    coupling between $\xi$, $p$ and $T$, can become singular. 
    As an example, setting $b_0=\beta=0$ the vector $(1, \alpha^{-1}, 0)^T$ can be seen to be
    in the kernel of the coupling operator. In this case, the strength of the singular perturbation
    is controlled by $1/\lambda$.

    Algebraic multigrid methods for singularly perturbed elliptic operators have been developed
    e.g. in \cite{lee2007robust, budivsa2024algebraic}. Therein, a crucial ingredient for uniform
    performance with respect to the strength parameter is smoothers which capture the kernel. However,
    this condition is usually not met by pointwise smoothers, and block smoothers are
    required.
    Motivated by the analysis of \cite{lee2007robust, budivsa2024algebraic} we investigate the
    role of smoothers when realizing the Riesz map with respect to \eqref{eq:amg3x3} by AMG.     
    In order to simplify the realization of block smoothers, let us consider a discretization of
    $Q_h$ and $W_h$ in terms of equal order elements, i.e. using $\mathbb{P}_1$ element for $W_h$ in \eqref{discreteFESpace}. 
    We shall then compare two AMG methods using point smoothers, namely
    BoomerAMG \cite{hypre} and smoothed aggregation AMG (SAMG) \cite{samg}, with SAMG
    utilizing block smoothers. The SAMG implementation was provided\footnote{
    PyAMG supports block smoothers automatically if the matrix representation
    of the problem is in the blocked sparse format. This representation of \eqref{eq:amg3x3} can
    be easily obtained with the equal-element discretization of $Q_h\times W_h\times W_h$.
    } by PyAMG \cite{pyamg2023}.

    We evaluate the different approximations in terms of the spectral condition numbers of
    \eqref{eq:amg3x3} preconditioned by the AMG solvers. In \Cref{tab:amg3x3} we
    observe that the performance of AMG with point smoothers is rather affected by parameter
    variations. On the other hand, block smoothers show little sensitivity. However, let us
    remark that parameter robust estimates in \cite{lee2007robust, budivsa2024algebraic} also
    require specialized prolongation operators that preserve the kernel. Here we have
    used standard prolongation instead. 

    \begin{table}
      \centering
      \tiny{
        \begin{tabular}{l|cccc||cccc||cccc}
          \hline
        & \multicolumn{4}{c||}{BoomerAMG} & \multicolumn{4}{c||}{SAMG, point smoother} & \multicolumn{4}{c}{SAMG, block smoother}\\
        \hline
      \backslashbox{$(\theta, K)$}{$l$} & 3 & 4 & 5 & 6 & 3 & 4 & 5 & 6 & 3 & 4 & 5 & 6 \\           
\hline
 ($10^{-6}$, $10^{-9}$) & 33.6 & 33.6 & 33   & 30.7   & 17.1 & 17   & 16.8 & 15.6  & 1   & 1   & 1   & 1      \\      
 ($10^{-6}$, $10^{-6}$) & 33.6 & 33.5 & 32.5 & 29.2   & 17   & 17   & 16.5 & 14.8  & 1   & 1   & 1   & 1      \\     
 ($10^{-6}$, $10^{-3}$) & 18.6 & 10.2 &  9.8 &  6.5   &  9.5 &  5.4 &  3.9 &  2.6  & 1   & 1   & 1   & 1.2   \\  
 ($10^{-6}$, 1)        &   6.6 &  7.3 &  7.5 &  7.5   &  1.5 &  1.4 &  1.5 &  1.8 &  1.2 & 1.4 & 1.4 & 1.6  \\ 
 \hline
 ($10^{-3}$, $10^{-9}$) & 14.5 &  9.2 &  9.5 &  7.2   &  7.5 &  4.5 &  3.1 &  2    & 1   & 1   & 1.1 & 1.2   \\  
 ($10^{-3}$, $10^{-6}$) & 14.5 &  9.2 &  9.5 &  7.2   &  7.5 &  4.5 &  3.1 &  2    & 1   & 1   & 1.1 & 1.2   \\ 
 ($10^{-3}$, $10^{-3}$) & 11.7 &  9.3 & 10.1 &  8.4   &  6.1 &  4.1 &  2.7 &  1.8  & 1   & 1   & 1.1 & 1.2   \\
 ($10^{-3}$, 1.0)      &   6.6 &  7.2 &  7.4 &  8.2   &  1.4 &  1.4 &  1.5 &  1.8 &  1.2 & 1.3 & 1.4 & 1.7   \\
 \hline
 (1, $10^{-9}$)        &   6   &  6.5 &  6.6 &  6.7   &  1.3 &  1.4 &  1.6 &  1.9 &  1.2 & 1.3 & 1.4 & 1.7   \\
 (1, $10^{-6}$)        &   6   &  6.5 &  6.6 &  6.7   &  1.3 &  1.4 &  1.6 &  1.9 &  1.2 & 1.3 & 1.4 & 1.7   \\
 (1, $10^{-3}$)        &   6   &  6.5 &  6.6 &  6.9   &  1.3 &  1.4 &  1.6 &  1.9 &  1.2 & 1.3 & 1.4 & 1.8   \\
 (1, 1)               &    5.2 &  5.5 &  5.6 &  5.6 &  1.8 &  4.1 & 15.5 & 67.2  & 1.2 & 1.4 & 1.5 & 1.8  \\
\hline
\end{tabular}
      }
      \caption{
        Spectral condition numbers of the Riesz map with respect to the inner product
        on $Q\times Q\times W$ induced by \eqref{operator B1} when using different algebraic multigrid methods 
        for preconditioning. A single V-cycle is always applied.
        Material parameters are as in \Cref{fig:robust}.
      }
      \label{tab:amg3x3}
    \end{table}
  \end{remark}

\section{Conclusions and outlook}\label{sec:conclusions}
In this study, we investigated a four-field formulation of a linear thermo-poroelastic model, discretized using conforming finite elements. To address the challenges associated with parameter variations and discretization, we developed two robust preconditioners for the resulting linear system. Both preconditioners were demonstrated to be robust with respect to model parameters and discretization parameters, as confirmed through extensive numerical experiments. These results highlight the theoretical rigor and practical efficiency of the proposed preconditioners in solving linear thermo-poroelasticity problems in a parameter-robust and computationally effective manner.

Future research will focus on extending these methods to nonlinear thermo-poroelastic models. This includes developing efficient iterative algorithms and advanced preconditioners to handle the additional complexities introduced by nonlinearities. Furthermore, investigating the application of these techniques to large-scale and real-world problems and exploring decoupled and adaptive approaches will be crucial in enhancing the computational efficiency and accuracy of the methods in practical scenarios.

\section*{Acknowledgments}
The work of M. Cai is partially supported by the NIH-RCMI award (Grant No. 347U54MD013376), Army Research Office award W911NF-23-1-0004, and the affiliated project award from the Center for Equitable Artificial Intelligence and Machine Learning Systems (CEAMLS) at Morgan State University (project ID 02232301).
The work of J. Li and Z. Li are partially supported by 
the Shenzhen Sci-Tech Fund No. RCJC20200714114556020, 
Guangdong Basic and Applied Research Fund No. 2023B1515250005.
M. Kuchta gratefully acknowledges support from the RCN grant 303362.  
K.-A. Mardal acknowledges funding and support from Stiftelsen
Kristian Gerhard Jebsen via the K. G. Jebsen Centre for Brain Fluid
Research, the Research Council of Norway via \#300305 (SciML) and \#301013 (Alzheimer´s physics), and the European Research Council under grant  101141807 (aCleanBrain).

\bibliographystyle{siamplain}
\bibliography{references}

\begin{thebibliography}{10}

\bibitem{antonietti2023discontinuous}
{\sc P.~F. Antonietti, S.~Bonetti, and M.~Botti}, {\em Discontinuous {G}alerkin approximation of the fully coupled thermo-poroelastic problem}, SIAM Journal on Scientific Computing, 45 (2023), pp.~A621--A645.

\bibitem{axelsson2012stable}
{\sc O.~Axelsson, R.~Blaheta, and P.~Byczanski}, {\em Stable discretization of poroelasticity problems and efficient preconditioners for arising saddle point type matrices}, Computing and Visualization in Science, 15 (2012), pp.~191--207.

\bibitem{ballarin2024projection}
{\sc F.~Ballarin, S.~Lee, and S.-Y. Yi}, {\em Projection-based reduced order modeling of an iterative scheme for linear thermo-poroelasticity}, Results in Applied Mathematics, 21 (2024), p.~100430.

\bibitem{pyamg2023}
{\sc N.~Bell, L.~N. Olson, J.~Schroder, and B.~Southworth}, {\em {PyAMG}: Algebraic multigrid solvers in {P}ython}, Journal of Open Source Software, 8 (2023), p.~5495.

\bibitem{biot1941general}
{\sc M.~A. Biot}, {\em General theory of three-dimensional consolidation}, Journal of applied physics, 12 (1941), pp.~155--164.

\bibitem{biot1955theory}
{\sc M.~A. Biot}, {\em Theory of elasticity and consolidation for a porous anisotropic solid}, Journal of applied physics, 26 (1955), pp.~182--185.

\bibitem{bonetti2024robust}
{\sc S.~Bonetti, M.~Botti, and P.~F. Antonietti}, {\em Robust discontinuous {G}alerkin-based scheme for the fully-coupled nonlinear thermo-hydro-mechanical problem}, IMA Journal of Numerical Analysis,  (2024), p.~drae045.

\bibitem{boon2021robust}
{\sc W.~M. Boon, M.~Kuchta, K.-A. Mardal, and R.~Ruiz-Baier}, {\em Robust preconditioners for perturbed saddle-point problems and conservative discretizations of {B}iot's equations utilizing total pressure}, SIAM Journal on Scientific Computing, 43 (2021), pp.~B961--B983.

\bibitem{braess1996stability}
{\sc D.~Braess}, {\em Stability of saddle point problems with penalty}, RAIRO Mod\'el. Math. Anal. Num\'er., 30 (1996), pp.~731--742.

\bibitem{brenner2004korn}
{\sc S.~C. Brenner}, {\em Korn's inequalities for piecewise {$H^1$} vector fields}, Mathematics of Computation,  (2004), pp.~1067--1087.

\bibitem{brun2020monolithic}
{\sc M.~K. Brun, E.~Ahmed, I.~Berre, J.~M. Nordbotten, and F.~A. Radu}, {\em Monolithic and splitting solution schemes for fully coupled quasi-static thermo-poroelasticity with nonlinear convective transport}, Computers \& Mathematics with Applications, 80 (2020), pp.~1964--1984.

\bibitem{brun2019well}
{\sc M.~K. Brun, E.~Ahmed, J.~M. Nordbotten, and F.~A. Radu}, {\em Well-posedness of the fully coupled quasi-static thermo-poroelastic equations with nonlinear convective transport}, Journal of Mathematical Analysis and Applications, 471 (2019), pp.~239--266.

\bibitem{brun2018upscaling}
{\sc M.~K. Brun, I.~Berre, J.~M. Nordbotten, and F.~A. Radu}, {\em Upscaling of the coupling of hydromechanical and thermal processes in a quasi-static poroelastic medium}, Transport in Porous Media, 124 (2018), pp.~137--158.

\bibitem{budivsa2021block}
{\sc A.~Budi{\v{s}}a and X.~Hu}, {\em Block preconditioners for mixed-dimensional discretization of flow in fractured porous media}, Computational Geosciences, 25 (2021), pp.~671--686.

\bibitem{budivsa2024algebraic}
{\sc A.~Budi{\v{s}}a, X.~Hu, M.~Kuchta, K.-A. Mardal, and L.~Zikatanov}, {\em Algebraic multigrid methods for metric-perturbed coupled problems}, SIAM Journal on Scientific Computing, 46 (2024), pp.~A1461--A1486.

\bibitem{cai2016hybrid}
{\sc M.~Cai and L.~F. Pavarino}, {\em Hybrid and multiplicative overlapping {S}chwarz algorithms with standard coarse spaces for mixed linear elasticity and stokes problems}, Communications in Computational Physics, 20 (2016), pp.~989--1015.

\bibitem{cai2015overlapping}
{\sc M.~Cai, L.~F. Pavarino, and O.~B. Widlund}, {\em Overlapping {S}chwarz methods with a standard coarse space for almost incompressible linear elasticity}, SIAM Journal on Scientific Computing, 37 (2015), pp.~A811--A830.

\bibitem{chen2022multiphysics}
{\sc Y.~Chen and Z.~Ge}, {\em Multiphysics finite element method for quasi-static thermo-poroelasticity}, Journal of Scientific Computing, 92 (2022), p.~43.

\bibitem{hypre}
{\sc R.~D. Falgout and U.~M. Yang}, {\em hypre: A library of high performance preconditioners}, in Computational Sciences --- ICCS 2002, P.~M.~A. Sloot, A.~G. Hoekstra, C.~J.~K. Tan, and J.~J. Dongarra, eds., Berlin, Heidelberg, 2002, Springer Berlin Heidelberg, pp.~632--641.

\bibitem{ge2023analysis}
{\sc Z.~Ge and D.~Xu}, {\em Analysis of multiphysics finite element method for quasi-static thermo-poroelasticity with a nonlinear convective transport term}, arXiv preprint arXiv:2310.05084,  (2023).

\bibitem{girault2012finite}
{\sc V.~Girault and P.-A. Raviart}, {\em Finite element methods for Navier-Stokes equations: theory and algorithms}, vol.~5, Springer Science \& Business Media, 2012.

\bibitem{golub2013matrix}
{\sc G.~H. Golub and C.~F. Van~Loan}, {\em Matrix computations}, JHU press, 2013.

\bibitem{haga2012parallel}
{\sc J.~B. Haga, H.~Osnes, and H.~P. Langtangen}, {\em A parallel block preconditioner for large-scale poroelasticity with highly heterogeneous material parameters}, Computational Geosciences, 16 (2012), pp.~723--734.

\bibitem{firedrake}
{\sc D.~A. Ham, P.~H.~J. Kelly, L.~Mitchell, C.~J. Cotter, R.~C. Kirby, et~al.}, {\em Firedrake User Manual}, Imperial College London and University of Oxford and Baylor University and University of Washington, first edition~ed., 5 2023.

\bibitem{hong2017parameter}
{\sc Q.~Hong and J.~Kraus}, {\em Parameter-robust stability of classical three-field formulation of {B}iot's consolidation model}, ETNA - Electronic Transactions on Numerical Analysis, 48 (2017).

\bibitem{hong2021new}
{\sc Q.~Hong, J.~K. Kraus, M.~Lymbery, and F.~Philo}, {\em A new practical framework for the stability analysis of perturbed saddle-point problems and applications}, Math. Comput., 92 (2023), pp.~607--634.

\bibitem{kim2018unconditionally}
{\sc J.~Kim}, {\em Unconditionally stable sequential schemes for all-way coupled thermoporomechanics: Undrained-adiabatic and extended fixed-stress splits}, Computer Methods in Applied Mechanics and Engineering, 341 (2018), pp.~93--112.

\bibitem{kolesov2014splitting}
{\sc A.~E. Kolesov, P.~N. Vabishchevich, and M.~V. Vasilyeva}, {\em Splitting schemes for poroelasticity and thermoelasticity problems}, Computers \& Mathematics with Applications, 67 (2014), pp.~2185--2198.

\bibitem{lee2017parameter}
{\sc J.~J. Lee, K.-A. Mardal, and R.~Winther}, {\em Parameter-robust discretization and preconditioning of {B}iot's consolidation model}, SIAM Journal on Scientific Computing, 39 (2017), pp.~A1--A24.

\bibitem{lee2007robust}
{\sc Y.-J. Lee, J.~Wu, J.~Xu, and L.~Zikatanov}, {\em Robust subspace correction methods for nearly singular systems}, Mathematical Models and Methods in Applied Sciences, 17 (2007), pp.~1937--1963.

\bibitem{mardal2011preconditioning}
{\sc K.-A. Mardal and R.~Winther}, {\em Preconditioning discretizations of systems of partial differential equations}, Numerical Linear Algebra with Applications, 18 (2011), pp.~1--40.

\bibitem{oyarzua2016locking}
{\sc R.~Oyarz{\'u}a and R.~Ruiz-Baier}, {\em Locking-free finite element methods for poroelasticity}, SIAM Journal on Numerical Analysis, 54 (2016), pp.~2951--2973.

\bibitem{piersanti2021parameter}
{\sc E.~Piersanti, J.~J. Lee, T.~Thompson, K.-A. Mardal, and M.~E. Rognes}, {\em Parameter robust preconditioning by congruence for multiple-network poroelasticity}, SIAM Journal on Scientific Computing, 43 (2021), pp.~B984--B1007.

\bibitem{selvadurai2016thermo}
{\sc A.~P. Selvadurai and A.~P. Suvorov}, {\em Thermo-poroelasticity and geomechanics}, Cambridge University Press, 2016.

\bibitem{terzaghi1943theoretical}
{\sc K.~Terzaghi}, {\em Theoretical soil mechanics}, 1943.

\bibitem{van2019thermoporoelasticity}
{\sc C.~J. van Duijn, A.~Mikeli{\'c}, M.~F. Wheeler, and T.~Wick}, {\em Thermoporoelasticity via homogenization: modeling and formal two-scale expansions}, International Journal of Engineering Science, 138 (2019), pp.~1--25.

\bibitem{van2020mathematical}
{\sc C.~J. van Duijn, A.~Mikeli{\'c}, and T.~Wick}, {\em Mathematical theory and simulations of thermoporoelasticity}, Computer Methods in Applied Mechanics and Engineering, 366 (2020), p.~113048.

\bibitem{samg}
{\sc P.~Vanek, J.~Mandel, and M.~Brezina}, {\em Algebraic multigrid by smoothed aggregation for second and fourth order elliptic problems}, Computing, 56 (1996), pp.~179--196.

\bibitem{zhang2022galerkin}
{\sc J.~Zhang and H.~Rui}, {\em {G}alerkin method for the fully coupled quasi-static thermo-poroelastic problem}, Computers \& Mathematics with Applications, 118 (2022), pp.~95--109.

\bibitem{zhang2024coupling}
{\sc J.~Zhang and H.~Rui}, {\em A coupling of {G}alerkin and mixed finite element methods for the quasi-static thermo-poroelasticity with nonlinear convective transport}, Journal of Computational and Applied Mathematics, 441 (2024), p.~115672.

\end{thebibliography}

\newpage
\appendix
\section*{Supplementary results}\label{sec:extras}
For the sake of clarity, we summarize the results of the sensitivity study 
in \Cref{fig:robust} in \Cref{tab:robust_lu,tab:robust_amg} showing for each
of the experimental setups (given by material parameters, choice of the preconditioner
and its realization), the number of MinRes iterations required for convergence when the system is discretized on the coarsest and the finest mesh.

\begin{table}
  \centering
  {
    \tiny{
      \renewcommand{\arraystretch}{1.1}
      \setlength{\tabcolsep}{1.1pt}
      \begin{tabular}{l|cc||l|cc||l|cc}
        \hline
        \diagbox{$\beta,\lambda,\theta, K$}{$l$} & 1 & 7 &         \diagbox{$\beta,\lambda,\theta, K$}{$l$} & 1 & 7 &         \diagbox{$\beta,\lambda,\theta, K$}{$l$} & 1 & 7 \\
        \hline
$10^{-4}$,$1$,$10^{-6}$,$10^{-9}$ & (18, 32) & (25, 45) &     $10^{-2}$,$1$,$10^{-6}$,$10^{-9}$ & (18, 32) & (25, 45) &                            $1$,$1$,$10^{-6}$,$10^{-9}$ & (18, 28) & (29, 50) \\       
$10^{-4}$,$1$,$10^{-6}$,$10^{-6}$ & (18, 32) & (25, 45) &     $10^{-2}$,$1$,$10^{-6}$,$10^{-6}$ & (18, 32) & (25, 45) &                            $1$,$1$,$10^{-6}$,$10^{-6}$ & (18, 28) & (29, 50) \\       
$10^{-4}$,$1$,$10^{-6}$,$10^{-3}$ & (18, 32) & (25, 47) &     $10^{-2}$,$1$,$10^{-6}$,$10^{-3}$ & (18, 32) & (25, 47) &                            $1$,$1$,$10^{-6}$,$10^{-3}$ & (18, 29) & (29, 72) \\       
$10^{-4}$,$1$,$10^{-6}$,$1$ & (18, 32) & (25, 43) &           $10^{-2}$,$1$,$10^{-6}$,$1$ & (18, 32) & (25, 43) &                                  $1$,$1$,$10^{-6}$,$1$ & (18, 36) & (27, 62) \\             
$10^{-4}$,$1$,$10^{-3}$,$10^{-9}$ & (18, 32) & (25, 47) &     $10^{-2}$,$1$,$10^{-3}$,$10^{-9}$ & (18, 32) & (25, 47) &                            $1$,$1$,$10^{-3}$,$10^{-9}$ & (18, 29) & (29, 72) \\       
$10^{-4}$,$1$,$10^{-3}$,$10^{-6}$ & (18, 32) & (25, 47) &     $10^{-2}$,$1$,$10^{-3}$,$10^{-6}$ & (18, 32) & (25, 47) &                            $1$,$1$,$10^{-3}$,$10^{-6}$ & (18, 29) & (29, 72) \\       
$10^{-4}$,$1$,$10^{-3}$,$10^{-3}$ & (18, 32) & (25, 48) &     $10^{-2}$,$1$,$10^{-3}$,$10^{-3}$ & (18, 32) & (25, 48) &                            $1$,$1$,$10^{-3}$,$10^{-3}$ & (18, 29) & (29, 72) \\       
$10^{-4}$,$1$,$10^{-3}$,$1$ & (18, 32) & (25, 43) &           $10^{-2}$,$1$,$10^{-3}$,$1$ & (18, 32) & (25, 43) &                                  $1$,$1$,$10^{-3}$,$1$ & (18, 36) & (27, 66) \\             
$10^{-4}$,$1$,$1$,$10^{-9}$ & (18, 31) & (25, 44) &           $10^{-2}$,$1$,$1$,$10^{-9}$ & (18, 31) & (25, 44) &                                  $1$,$1$,$1$,$10^{-9}$ & (18, 36) & (27, 62) \\             
$10^{-4}$,$1$,$1$,$10^{-6}$ & (18, 31) & (25, 44) &           $10^{-2}$,$1$,$1$,$10^{-6}$ & (18, 31) & (25, 44) &                                  $1$,$1$,$1$,$10^{-6}$ & (18, 36) & (27, 62) \\             
$10^{-4}$,$1$,$1$,$10^{-3}$ & (18, 31) & (25, 44) &           $10^{-2}$,$1$,$1$,$10^{-3}$ & (18, 31) & (25, 44) &                                  $1$,$1$,$1$,$10^{-3}$ & (18, 36) & (27, 66) \\             
$10^{-4}$,$1$,$1$,$1$ & (18, 30) & (25, 42) &                 $10^{-2}$,$1$,$1$,$1$ & (18, 30) & (25, 42) &                                        $1$,$1$,$1$,$1$ & (18, 35) & (26, 55) \\                   
$10^{-4}$,$10^{3}$,$10^{-6}$,$10^{-9}$ & (18, 20) & (34, 37) &$10^{-2}$,$10^{3}$,$10^{-6}$,$10^{-9}$ & (18, 20) & (34, 37) &                       $1$,$10^{3}$,$10^{-6}$,$10^{-9}$ & (18, 20) & (34, 35) \\  
$10^{-4}$,$10^{3}$,$10^{-6}$,$10^{-6}$ & (18, 20) & (34, 37) &$10^{-2}$,$10^{3}$,$10^{-6}$,$10^{-6}$ & (18, 20) & (34, 37) &                       $1$,$10^{3}$,$10^{-6}$,$10^{-6}$ & (18, 20) & (34, 35) \\  
$10^{-4}$,$10^{3}$,$10^{-6}$,$10^{-3}$ & (18, 20) & (34, 37) &$10^{-2}$,$10^{3}$,$10^{-6}$,$10^{-3}$ & (18, 20) & (34, 37) &                       $1$,$10^{3}$,$10^{-6}$,$10^{-3}$ & (18, 20) & (34, 35) \\  
$10^{-4}$,$10^{3}$,$10^{-6}$,$1$ & (18, 23) & (33, 37) &      $10^{-2}$,$10^{3}$,$10^{-6}$,$1$ & (18, 23) & (33, 37) &                             $1$,$10^{3}$,$10^{-6}$,$1$ & (18, 23) & (33, 37) \\        
$10^{-4}$,$10^{3}$,$10^{-3}$,$10^{-9}$ & (18, 20) & (34, 37) &$10^{-2}$,$10^{3}$,$10^{-3}$,$10^{-9}$ & (18, 20) & (34, 37) &                       $1$,$10^{3}$,$10^{-3}$,$10^{-9}$ & (18, 20) & (34, 35) \\  
$10^{-4}$,$10^{3}$,$10^{-3}$,$10^{-6}$ & (18, 20) & (34, 37) &$10^{-2}$,$10^{3}$,$10^{-3}$,$10^{-6}$ & (18, 20) & (34, 37) &                       $1$,$10^{3}$,$10^{-3}$,$10^{-6}$ & (18, 20) & (34, 35) \\  
$10^{-4}$,$10^{3}$,$10^{-3}$,$10^{-3}$ & (18, 20) & (34, 39) &$10^{-2}$,$10^{3}$,$10^{-3}$,$10^{-3}$ & (18, 20) & (34, 39) &                       $1$,$10^{3}$,$10^{-3}$,$10^{-3}$ & (18, 20) & (34, 39) \\  
$10^{-4}$,$10^{3}$,$10^{-3}$,$1$ & (18, 23) & (33, 39) &      $10^{-2}$,$10^{3}$,$10^{-3}$,$1$ & (18, 23) & (33, 39) &                             $1$,$10^{3}$,$10^{-3}$,$1$ & (18, 23) & (33, 40) \\        
$10^{-4}$,$10^{3}$,$1$,$10^{-9}$ & (18, 23) & (33, 37) &      $10^{-2}$,$10^{3}$,$1$,$10^{-9}$ & (18, 23) & (33, 37) &                             $1$,$10^{3}$,$1$,$10^{-9}$ & (18, 23) & (33, 37) \\        
$10^{-4}$,$10^{3}$,$1$,$10^{-6}$ & (18, 23) & (33, 37) &      $10^{-2}$,$10^{3}$,$1$,$10^{-6}$ & (18, 23) & (33, 37) &                             $1$,$10^{3}$,$1$,$10^{-6}$ & (18, 23) & (33, 37) \\        
$10^{-4}$,$10^{3}$,$1$,$10^{-3}$ & (18, 23) & (33, 40) &      $10^{-2}$,$10^{3}$,$1$,$10^{-3}$ & (18, 23) & (33, 40) &                             $1$,$10^{3}$,$1$,$10^{-3}$ & (18, 23) & (33, 40) \\        
$10^{-4}$,$10^{3}$,$1$,$1$ & (18, 24) & (33, 40) &            $10^{-2}$,$10^{3}$,$1$,$1$ & (18, 24) & (33, 40) &                                   $1$,$10^{3}$,$1$,$1$ & (18, 24) & (33, 40) \\              
$10^{-4}$,$10^{6}$,$10^{-6}$,$10^{-9}$ & (12, 13) & (24, 25) &$10^{-2}$,$10^{6}$,$10^{-6}$,$10^{-9}$ & (12, 13) & (24, 25) &                       $1$,$10^{6}$,$10^{-6}$,$10^{-9}$ & (12, 12) & (24, 24) \\  
$10^{-4}$,$10^{6}$,$10^{-6}$,$10^{-6}$ & (12, 13) & (24, 25) &$10^{-2}$,$10^{6}$,$10^{-6}$,$10^{-6}$ & (12, 13) & (24, 25) &                       $1$,$10^{6}$,$10^{-6}$,$10^{-6}$ & (12, 12) & (24, 24) \\  
$10^{-4}$,$10^{6}$,$10^{-6}$,$10^{-3}$ & (12, 13) & (24, 25) &$10^{-2}$,$10^{6}$,$10^{-6}$,$10^{-3}$ & (12, 13) & (24, 25) &                       $1$,$10^{6}$,$10^{-6}$,$10^{-3}$ & (12, 12) & (24, 24) \\  
$10^{-4}$,$10^{6}$,$10^{-6}$,$1$ & (12, 13) & (24, 25) &      $10^{-2}$,$10^{6}$,$10^{-6}$,$1$ & (12, 13) & (24, 25) &                             $1$,$10^{6}$,$10^{-6}$,$1$ & (12, 12) & (24, 24) \\        
$10^{-4}$,$10^{6}$,$10^{-3}$,$10^{-9}$ & (12, 13) & (24, 25) &$10^{-2}$,$10^{6}$,$10^{-3}$,$10^{-9}$ & (12, 13) & (24, 25) &                       $1$,$10^{6}$,$10^{-3}$,$10^{-9}$ & (12, 12) & (24, 24) \\  
$10^{-4}$,$10^{6}$,$10^{-3}$,$10^{-6}$ & (12, 13) & (24, 25) &$10^{-2}$,$10^{6}$,$10^{-3}$,$10^{-6}$ & (12, 13) & (24, 25) &                       $1$,$10^{6}$,$10^{-3}$,$10^{-6}$ & (12, 12) & (24, 24) \\  
$10^{-4}$,$10^{6}$,$10^{-3}$,$10^{-3}$ & (12, 13) & (24, 25) &$10^{-2}$,$10^{6}$,$10^{-3}$,$10^{-3}$ & (12, 13) & (24, 25) &                       $1$,$10^{6}$,$10^{-3}$,$10^{-3}$ & (12, 12) & (24, 24) \\  
$10^{-4}$,$10^{6}$,$10^{-3}$,$1$ & (12, 13) & (24, 25) &      $10^{-2}$,$10^{6}$,$10^{-3}$,$1$ & (12, 13) & (24, 25) &                             $1$,$10^{6}$,$10^{-3}$,$1$ & (12, 12) & (24, 25) \\        
$10^{-4}$,$10^{6}$,$1$,$10^{-9}$ & (12, 13) & (24, 25) &      $10^{-2}$,$10^{6}$,$1$,$10^{-9}$ & (12, 13) & (24, 25) &                             $1$,$10^{6}$,$1$,$10^{-9}$ & (12, 12) & (24, 24) \\        
$10^{-4}$,$10^{6}$,$1$,$10^{-6}$ & (12, 13) & (24, 25) &      $10^{-2}$,$10^{6}$,$1$,$10^{-6}$ & (12, 13) & (24, 25) &                             $1$,$10^{6}$,$1$,$10^{-6}$ & (12, 12) & (24, 24) \\        
$10^{-4}$,$10^{6}$,$1$,$10^{-3}$ & (12, 13) & (24, 25) &      $10^{-2}$,$10^{6}$,$1$,$10^{-3}$ & (12, 13) & (24, 25) &                             $1$,$10^{6}$,$1$,$10^{-3}$ & (12, 12) & (24, 25) \\        
$10^{-4}$,$10^{6}$,$1$,$1$ & (12, 13) & (24, 25) &            $10^{-2}$,$10^{6}$,$1$,$1$ & (12, 13) & (24, 25) &                                   $1$,$10^{6}$,$1$,$1$ & (12, 12) & (24, 25) \\              
        \hline                                                
      \end{tabular}                                                  
    }
  }
  \caption{
    Tabular representation of subset of data from \Cref{fig:robust}.
    Number of MinRes iterations required to solve the linear system due to \eqref{TP_Model_static} under varying $K$,
    $\beta$, $\theta$ and $\lambda$ and discretization with meshes of size $h=1/2^l$ using
    preconditioners $\mathcal{B}^{-1}_1$ in \eqref{eq:precond3x3} (corresponding iteration count is shown as the
    first number in the brackets) and block diagonal preconditioner $\mathcal{B}^{-1}_2$ in  \eqref{eq:preconddiag}. The
    preconditioners are computed exactly by LU.   
  }
  \label{tab:robust_lu}
\end{table}

\begin{table}
  \centering
  {
    \tiny{
      \renewcommand{\arraystretch}{1.1}
      \setlength{\tabcolsep}{1.1pt}
      \begin{tabular}{l|cc||l|cc||l|cc}
        \hline
        \diagbox{$\beta,\lambda,\theta, K$}{$l$} & 1 & 7 &         \diagbox{$\beta,\lambda,\theta, K$}{$l$} & 1 & 7 &         \diagbox{$\beta,\lambda,\theta, K$}{$l$} & 1 & 7 \\
        \hline
$10^{-4}$,$1$,$10^{-6}$,$10^{-9}$ & (25, 36) & (42, 66) &         $10^{-2}$,$1$,$10^{-6}$,$10^{-9}$ & (26, 36) & (42, 66) &          $1$,$1$,$10^{-6}$,$10^{-9}$ & (26, 37) & (46, 77) \\       
$10^{-4}$,$1$,$10^{-6}$,$10^{-6}$ & (25, 36) & (42, 66) &         $10^{-2}$,$1$,$10^{-6}$,$10^{-6}$ & (26, 36) & (42, 66) &          $1$,$1$,$10^{-6}$,$10^{-6}$ & (26, 37) & (46, 79) \\       
$10^{-4}$,$1$,$10^{-6}$,$10^{-3}$ & (25, 36) & (40, 66) &         $10^{-2}$,$1$,$10^{-6}$,$10^{-3}$ & (26, 36) & (40, 66) &          $1$,$1$,$10^{-6}$,$10^{-3}$ & (26, 38) & (44, 101) \\      
$10^{-4}$,$1$,$10^{-6}$,$1$ & (25, 35) & (39, 58) &               $10^{-2}$,$1$,$10^{-6}$,$1$ & (25, 35) & (39, 58) &                $1$,$1$,$10^{-6}$,$1$ & (26, 39) & (40, 91) \\             
$10^{-4}$,$1$,$10^{-3}$,$10^{-9}$ & (25, 36) & (41, 67) &         $10^{-2}$,$1$,$10^{-3}$,$10^{-9}$ & (26, 36) & (41, 67) &          $1$,$1$,$10^{-3}$,$10^{-9}$ & (26, 38) & (44, 101) \\      
$10^{-4}$,$1$,$10^{-3}$,$10^{-6}$ & (25, 36) & (41, 67) &         $10^{-2}$,$1$,$10^{-3}$,$10^{-6}$ & (26, 36) & (41, 67) &          $1$,$1$,$10^{-3}$,$10^{-6}$ & (26, 38) & (44, 101) \\      
$10^{-4}$,$1$,$10^{-3}$,$10^{-3}$ & (25, 36) & (39, 67) &         $10^{-2}$,$1$,$10^{-3}$,$10^{-3}$ & (26, 36) & (39, 67) &          $1$,$1$,$10^{-3}$,$10^{-3}$ & (26, 38) & (44, 101) \\      
$10^{-4}$,$1$,$10^{-3}$,$1$ & (25, 35) & (38, 58) &               $10^{-2}$,$1$,$10^{-3}$,$1$ & (25, 35) & (38, 59) &                $1$,$1$,$10^{-3}$,$1$ & (26, 39) & (40, 94) \\             
$10^{-4}$,$1$,$1$,$10^{-9}$ & (25, 36) & (40, 61) &               $10^{-2}$,$1$,$1$,$10^{-9}$ & (25, 36) & (40, 62) &                $1$,$1$,$1$,$10^{-9}$ & (26, 39) & (40, 91) \\             
$10^{-4}$,$1$,$1$,$10^{-6}$ & (25, 36) & (40, 61) &               $10^{-2}$,$1$,$1$,$10^{-6}$ & (25, 36) & (40, 62) &                $1$,$1$,$1$,$10^{-6}$ & (26, 39) & (40, 92) \\             
$10^{-4}$,$1$,$1$,$10^{-3}$ & (25, 36) & (39, 62) &               $10^{-2}$,$1$,$1$,$10^{-3}$ & (25, 36) & (40, 62) &                $1$,$1$,$1$,$10^{-3}$ & (26, 39) & (40, 94) \\             
$10^{-4}$,$1$,$1$,$1$ & (25, 35) & (38, 58) &                     $10^{-2}$,$1$,$1$,$1$ & (25, 35) & (38, 58) &                      $1$,$1$,$1$,$1$ & (24, 40) & (38, 76) \\                   
$10^{-4}$,$10^{3}$,$10^{-6}$,$10^{-9}$ & (28, 33) & (66, 75) &    $10^{-2}$,$10^{3}$,$10^{-6}$,$10^{-9}$ & (28, 33) & (66, 75) &     $1$,$10^{3}$,$10^{-6}$,$10^{-9}$ & (28, 33) & (66, 75) \\  
$10^{-4}$,$10^{3}$,$10^{-6}$,$10^{-6}$ & (28, 33) & (66, 75) &    $10^{-2}$,$10^{3}$,$10^{-6}$,$10^{-6}$ & (28, 33) & (66, 75) &     $1$,$10^{3}$,$10^{-6}$,$10^{-6}$ & (28, 33) & (66, 75) \\  
$10^{-4}$,$10^{3}$,$10^{-6}$,$10^{-3}$ & (28, 33) & (66, 72) &    $10^{-2}$,$10^{3}$,$10^{-6}$,$10^{-3}$ & (28, 33) & (66, 72) &     $1$,$10^{3}$,$10^{-6}$,$10^{-3}$ & (28, 35) & (66, 72) \\  
$10^{-4}$,$10^{3}$,$10^{-6}$,$1$ & (27, 34) & (68, 75) &          $10^{-2}$,$10^{3}$,$10^{-6}$,$1$ & (27, 34) & (68, 75) &           $1$,$10^{3}$,$10^{-6}$,$1$ & (27, 34) & (68, 75) \\        
$10^{-4}$,$10^{3}$,$10^{-3}$,$10^{-9}$ & (28, 33) & (65, 72) &    $10^{-2}$,$10^{3}$,$10^{-3}$,$10^{-9}$ & (28, 33) & (65, 72) &     $1$,$10^{3}$,$10^{-3}$,$10^{-9}$ & (28, 35) & (65, 72) \\  
$10^{-4}$,$10^{3}$,$10^{-3}$,$10^{-6}$ & (28, 33) & (65, 72) &    $10^{-2}$,$10^{3}$,$10^{-3}$,$10^{-6}$ & (28, 33) & (65, 72) &     $1$,$10^{3}$,$10^{-3}$,$10^{-6}$ & (28, 34) & (65, 72) \\  
$10^{-4}$,$10^{3}$,$10^{-3}$,$10^{-3}$ & (28, 33) & (61, 76) &    $10^{-2}$,$10^{3}$,$10^{-3}$,$10^{-3}$ & (28, 33) & (61, 76) &     $1$,$10^{3}$,$10^{-3}$,$10^{-3}$ & (28, 35) & (61, 76) \\  
$10^{-4}$,$10^{3}$,$10^{-3}$,$1$ & (27, 34) & (65, 78) &          $10^{-2}$,$10^{3}$,$10^{-3}$,$1$ & (27, 34) & (65, 78) &           $1$,$10^{3}$,$10^{-3}$,$1$ & (27, 34) & (65, 78) \\        
$10^{-4}$,$10^{3}$,$1$,$10^{-9}$ & (26, 34) & (68, 75) &          $10^{-2}$,$10^{3}$,$1$,$10^{-9}$ & (26, 34) & (68, 75) &           $1$,$10^{3}$,$1$,$10^{-9}$ & (26, 34) & (68, 75) \\        
$10^{-4}$,$10^{3}$,$1$,$10^{-6}$ & (26, 34) & (68, 75) &          $10^{-2}$,$10^{3}$,$1$,$10^{-6}$ & (26, 34) & (68, 75) &           $1$,$10^{3}$,$1$,$10^{-6}$ & (26, 34) & (68, 75) \\        
$10^{-4}$,$10^{3}$,$1$,$10^{-3}$ & (26, 34) & (65, 78) &          $10^{-2}$,$10^{3}$,$1$,$10^{-3}$ & (26, 34) & (65, 78) &           $1$,$10^{3}$,$1$,$10^{-3}$ & (26, 34) & (65, 78) \\        
$10^{-4}$,$10^{3}$,$1$,$1$ & (25, 34) & (65, 78) &                $10^{-2}$,$10^{3}$,$1$,$1$ & (25, 34) & (65, 78) &                 $1$,$10^{3}$,$1$,$1$ & (25, 35) & (65, 78) \\              
$10^{-4}$,$10^{6}$,$10^{-6}$,$10^{-9}$ & (21, 22) & (64, 39) &    $10^{-2}$,$10^{6}$,$10^{-6}$,$10^{-9}$ & (21, 22) & (64, 39) &     $1$,$10^{6}$,$10^{-6}$,$10^{-9}$ & (21, 21) & (64, 39) \\  
$10^{-4}$,$10^{6}$,$10^{-6}$,$10^{-6}$ & (21, 22) & (64, 39) &    $10^{-2}$,$10^{6}$,$10^{-6}$,$10^{-6}$ & (21, 22) & (64, 39) &     $1$,$10^{6}$,$10^{-6}$,$10^{-6}$ & (21, 21) & (64, 39) \\  
$10^{-4}$,$10^{6}$,$10^{-6}$,$10^{-3}$ & (21, 22) & (64, 37) &    $10^{-2}$,$10^{6}$,$10^{-6}$,$10^{-3}$ & (21, 22) & (64, 37) &     $1$,$10^{6}$,$10^{-6}$,$10^{-3}$ & (21, 21) & (64, 37) \\  
$10^{-4}$,$10^{6}$,$10^{-6}$,$1$ & (21, 21) & (65, 37) &          $10^{-2}$,$10^{6}$,$10^{-6}$,$1$ & (21, 21) & (65, 37) &           $1$,$10^{6}$,$10^{-6}$,$1$ & (21, 21) & (65, 37) \\        
$10^{-4}$,$10^{6}$,$10^{-3}$,$10^{-9}$ & (21, 22) & (63, 37) &    $10^{-2}$,$10^{6}$,$10^{-3}$,$10^{-9}$ & (21, 22) & (63, 37) &     $1$,$10^{6}$,$10^{-3}$,$10^{-9}$ & (21, 21) & (63, 37) \\  
$10^{-4}$,$10^{6}$,$10^{-3}$,$10^{-6}$ & (21, 22) & (63, 37) &    $10^{-2}$,$10^{6}$,$10^{-3}$,$10^{-6}$ & (21, 22) & (63, 37) &     $1$,$10^{6}$,$10^{-3}$,$10^{-6}$ & (21, 21) & (63, 37) \\  
$10^{-4}$,$10^{6}$,$10^{-3}$,$10^{-3}$ & (21, 22) & (59, 37) &    $10^{-2}$,$10^{6}$,$10^{-3}$,$10^{-3}$ & (21, 22) & (59, 37) &     $1$,$10^{6}$,$10^{-3}$,$10^{-3}$ & (21, 21) & (59, 37) \\  
$10^{-4}$,$10^{6}$,$10^{-3}$,$1$ & (21, 21) & (62, 39) &          $10^{-2}$,$10^{6}$,$10^{-3}$,$1$ & (21, 21) & (62, 39) &           $1$,$10^{6}$,$10^{-3}$,$1$ & (21, 21) & (62, 39) \\        
$10^{-4}$,$10^{6}$,$1$,$10^{-9}$ & (20, 21) & (65, 37) &          $10^{-2}$,$10^{6}$,$1$,$10^{-9}$ & (20, 21) & (65, 37) &           $1$,$10^{6}$,$1$,$10^{-9}$ & (20, 21) & (65, 37) \\        
$10^{-4}$,$10^{6}$,$1$,$10^{-6}$ & (20, 21) & (65, 37) &          $10^{-2}$,$10^{6}$,$1$,$10^{-6}$ & (20, 21) & (65, 37) &           $1$,$10^{6}$,$1$,$10^{-6}$ & (20, 21) & (65, 37) \\        
$10^{-4}$,$10^{6}$,$1$,$10^{-3}$ & (20, 21) & (62, 39) &          $10^{-2}$,$10^{6}$,$1$,$10^{-3}$ & (20, 21) & (62, 39) &           $1$,$10^{6}$,$1$,$10^{-3}$ & (20, 21) & (63, 39) \\        
        $10^{-4}$,$10^{6}$,$1$,$1$ & (20, 21) & (62, 39) &                $10^{-2}$,$10^{6}$,$1$,$1$ & (20, 21) & (62, 39) &                 $1$,$10^{6}$,$1$,$1$ & (20, 21) & (62, 39) \\
        \hline
      \end{tabular}                                                  
    }
  }
  \caption{
    Tabular representation of subset of data from \Cref{fig:robust}.
    Number of MinRes iterations required to solve the linear system due to \eqref{TP_Model_static} under varying $K$,
    $\beta$, $\theta$ and $\lambda$ and discretization with meshes of size $h=1/2^l$ using
    preconditioners $\mathcal{B}^{-1}_1$ in \eqref{eq:precond3x3} (corresponding iteration count is shown as the
    first number in the brackets) and block diagonal preconditioner $\mathcal{B}^{-1}_2$ in  \eqref{eq:preconddiag}. The
    preconditioners are approximated by AMG.   
  }
  \label{tab:robust_amg}  
\end{table}

\end{document}